\documentclass[3p]{elsarticle}

\usepackage{setspace}


\usepackage{amsmath,amsfonts,amssymb,amsbsy,amscd, amsthm}
\usepackage{graphicx}
\usepackage{mathrsfs,array}
\usepackage{verbatim} 
\usepackage{tikz}
\usepackage{subcaption}
\usepackage{pstricks-add}
\usepackage[section,subsection,subsubsection]{extraplaceins}

\newcommand{\vertiii}[1]{{\left\vert\kern-0.25ex\left\vert\kern-0.25ex\left\vert #1 
    \right\vert\kern-0.25ex\right\vert\kern-0.25ex\right\vert}}

\newtheorem{lem}{Lemma}
\newtheorem{theorem}[lem]{Theorem}
\newtheorem{definition}[lem]{Definition}
\newtheorem{prop}[lem]{Proposition}
\newtheorem{corollary}[lem]{Corollary}

\numberwithin{lem}{section}

\newdefinition{rmk}[lem]{Remark}


%



\newcommand{\eps}{\varepsilon}

\newcommand{\nn}{\nonumber}

\newcommand{\pvarphiF}{\dfrac{\partial F}{\partial \varphi}}
\newcommand{\pthetaF}{\dfrac{\partial F}{\partial \theta}}
\newcommand{\pvarphivarphiF}{\dfrac{\partial^2F}{\partial \varphi^2}}
\newcommand{\pthetathetaF}{\dfrac{\partial^2F}{\partial \theta^2}}
\newcommand{\pvarphithetaF}{\dfrac{\partial^2F}{\partial \varphi \partial\theta}}

\setcounter{MaxMatrixCols}{15}

\def\be{\begin{equation}}
\def\ee{\end{equation}}
\def\ba{\begin{array}}
\def\ea{\end{array}}
\def\bea{\begin{eqnarray}}
\def\eea{\end{eqnarray}}
\def\beas{\begin{eqnarray*}}
\def\eeas{\end{eqnarray*}}
\newcommand{\bseq}{\begin{subequations}}
\newcommand{\eseq}{\end{subequations}}


\biboptions{numbers,sort&compress}

\usepackage{hyperref}
\hypersetup{
    colorlinks,
    citecolor=ck,
    filecolor=ck,
    linkcolor=ck,
    urlcolor=ck
}

\usepackage{color}

\newcommand{\Ido}[1] { {\color{violet}  #1} }
\newcommand{\Derive}[1] { {\color{green}  #1} }
\newcommand{\TBD}[1] { {\color{red} #1} }

\renewcommand{\Ido}[1] {}
\renewcommand{\Derive}[1] {}
\renewcommand{\TBD}[1] {}



\usepackage[final]{changes}
\definechangesauthor[color=blue]{Rev.1}
\definechangesauthor[color=red]{Rev.2}
\definechangesauthor[color=purple]{Authors}

\begin{document}

\begin{frontmatter}

\title{Variation entropy: a continuous local generalization of the TVD property using entropy principles.}
\author{M.F.P. ten Eikelder\corref{cor1}}
\cortext[cor1]{Corresponding author}
\ead{m.f.p.teneikelder@tudelft.nl}
\author{I. Akkerman}
\ead{i.akkerman@tudelft.nl}

\address{Delft University of Technology, Department of Mechanical, Maritime and Materials Engineering, P.O. Box 5, 2600 AA Delft, The Netherlands}
\date{\today}

\journal{Computer methods in Applied Mechanics and Engineering}

\begin{abstract}
This paper presents the notion of a variation entropy. 
This concept is an entropy framework for the gradient of the solution of a conservation law instead of on the solution itself.
It appears that all semi-norms are admissible variation entropies.
This provides insight into the total variation diminishing property and justifies it from entropy principles.
The framework allows to derive new local variation diminishing properties in the continuous form.
This can facilitate the design of new numerical methods for problems that contain discontinuities.
\end{abstract}

\end{frontmatter}

\section{Introduction}
\label{sec:intro}

Violent disturbances emerging from sudden changes in velocity, pressure and temperature, known as shock waves appear everywhere in nature, science and industrial applications. 
Examples are water--air flows, supersonic flights, the water hammer phenomena, shock--bubble interaction, material impact and sudden changes in crowd dynamics. 
The behavior of these phenomena is usually governed by nonlinear conservation laws.
The development of numerical techniques for the solution procedure of conservation laws is challenging because higher-order methods produce oscillations near shocks. 
There exists a large class of numerical methods which aim to tackle these oscillations via reducing to first-order spatial accuracy at the shock wave. 
These techniques augment the numerical method in one way or another with artificial diffusion or viscosity in the shock wave region. 

Most numerical methods developed for problems involving shock waves use finite-difference or finite-volume approaches.
These methods are often well-established and show good performance in numerical computations.
The employed mechanisms can often be linked to one of the following.
The concept of flux limiters (MUSCL), see e.g. \citep{van1979towards, sweby1984high, van1997towards, toro2000centred, koren1993robust}, reduces the scheme at the shock to first-order by adding diffusion.
The monotonicity property introduced by Harten in 1983 \citep{harten1983high} precludes the creation of local extrema and ensures that local minima (maxima) are non-decreasing (non-increasing).
Perhaps the most relevant in some numerical simulations is the maximum principle, see \citep{harten1983high} or the more recent work \citep{zhang2010genuinely, mehmetoglu2012maximum}.
This principle states that the solution values remain between the minimum and maximum of the initial condition.
This is in particular important in simulations of physical quantities that should remain non-negative, e.g. densities and also, in the case of two-fluid problems, volume-fractions.
A negative density or a volume-fraction exceeding the zero-one range can directly lead to a blow-up of the simulations.
Therefore numerical methods that preclude this by design are often sought after, see e.g. for compressible two-fluid flow simulations \cite{shen2017maximum, ten2017acoustic, daude2016computation}.

Possibly the most famous property is the total variation diminishing (TVD) property \cite{harten1983high, harten1984class, shu1988total}. 
The total variation diminishing schemes preclude the growth of the total variation of the solution.
These methods ensure that the numerical solution $\phi$ of a PDE satisfies
\begin{align}
  {\rm TV}(\phi^{n+1}) \leq {\rm TV}(\phi^{n}),
\end{align}
where $n$ denotes the time-level.
Here the total variation ${\rm TV}$ is defined in one and two dimensions respectively as:
\begin{subequations}
  \begin{alignat}{2}
  {\rm TV}(\phi) =& \sum_j | \phi_{j+1}-\phi_j|,\\
  {\rm TV}(\phi) =& \sum_{j,k} \left(\Delta y | \phi_{j+1,k}-\phi_{j,k}|+ \Delta x |\phi_{j,k+1}-\phi_{j,k}|\right), \label{eq: TVD FD, 2D}
  \end{alignat}
\end{subequations}
with the subscript the spatial index and with $|\cdot|$ the absolute value.
This definition is based on the discrete approximation of the $L_1$-norm of the gradient.
It is important to emphasize that it represents a global property that assumes an underlying Cartesian grid that is possibly non-uniform and does not have a straightforward equivalent for unstructured grids.
Furthermore, the TVD property does not readily extend to multiple dimensions.
This has been the motivation for the development of local extremum diminishing (LED) schemes \cite{jameson1989computational,jameson1995positive}. 
In the two-dimensional case, the definition of the total variation depends on the orientation of the grid, i.e. it would change if one would rotate the grid.
This is undesirable since (i) this total variation does not have a continuous counterpart and (ii) generally one does not want the numerical method to depend on the coordinate system (objectivity).
Note that all of the above mentioned features and properties are satisfied by the exact solution and are in the discrete setting closely linked.
In particular cases (e.g. one-dimensional, scalar) one implies another \citep{harten1983high}.

In the framework of finite element methods several stabilized methods have been proposed to deal with spurious wiggles in the solution profiles of convection-dominated problems. 
The well-known methods are the Streamline upwind-Petrov Galerkin (SUPG) method \citep{BroHug82}, the Galerkin/least-squares method \citep{hughes1989new8} and the variational multiscale method \citep{Hug95, Hug96, Hug98}. 
The latter method offers a rich prospect for design new stabilized methods and has gained a lot of attention recently \citep{colomes2015assessment, yan2017new, bayona2018variational,EiAk17i, EiAk17ii}.
In the direction of TVD schemes and maximum principles, several VMS methods have been proposed.
For example a total variation bounding constraint \citep{evans2010VMSDC} and the maximum principle \citep{evans2009enforcement} has been enforced  in the VMS context.
When shocks waves form, plain stabilized methods are not sufficient and additional dissipation mechanisms are necessary.
These mechanisms are called discontinuity capturing (DC) operators \citep{hughes1986new4, leBeau1991finite}, and are sometimes residual-based \citep{tezduyar2004finitea, BaCaTeHu07, ABKF11} or entropy-residual based \citep{guermond2011entropy}.
We refer for an overview of DC to the review papers \citep{hughes2010stabilized, JoKn07, john2008spurious}.

To single out the physically relevant solutions the concept of entropy solutions has been proposed by Kruvzkov in his seminal 1970 paper \citep{kruvzkov1970first}. 
The entropy solution is a limiting case of a generalized solution which perturbs the conservation law with a diffusion term and can be used to prove existence, uniqueness and stability theorems.
In the case of systems of conservation laws, Friedrichs, Kurt and Lax show in 1971 that if an additional conserved quantity is a convex function of the solution then the system of equations can be symmetrized and provides a corresponding entropy inequality \citep{friedrichs1971systems}. 
Harten continues the research on symmetrizability of systems of conservation laws which possess entropy functions \citep{harten1983symmetric}. 
Additionally, he provides symmetric formulations in conservative variables for the Euler equations of gas dynamics. 
Tadmor shows a year later, in 1984, that the concepts of symmetrizability, having an entropy function and having a so-called skew-selfadjoint form are equivalent \citep{tadmor1984skew}. 
Furthermore Tadmor identifies in \citep{tadmor1987entropy} that any symmetric system of conservation laws is equipped with a one-parameter family of entropy functions. 
The work of Harten and Tadmor has been generalized by Hughes et al. \citep{hughes1986new1} to the compressible Navier-Stokes equations with heat conduction effects. 
The corresponding finite element schemes satisfy by design the second law of thermodynamics, see also \citep{chalot1990symmetrization}.

Although total variation diminishing schemes have proven their power and relevance, their use seems to be restricted to finite-difference/finite-volume discretizations and a generic finite element variant seems to be missing.
Moreover, the different concepts of total variation diminishing schemes and entropy solutions/entropy variables both target to improve the solution quality at shock waves.
Despite that they serve the same goal, a clear connection (on the continuous level) is missing.
These observations led to ponder the following two questions: 
\begin{itemize}
 \item \textit{How can we construct a local continuous generalization of the TVD stability condition?}
 \item \textit{Is there a connection between entropy solutions and the TVD property?}
\end{itemize}

The current paper aims to answer these questions.
To that purpose, we introduce a new stability concept which we call \textit{variation entropy}.
Similar to the well-known entropy concept, \textit{variation entropy solutions} are those solutions for which an additional quantity is conserved or dissipated.
The fundamental difference is that a variation entropy is a function of the \textit{gradient of the solution} rather than the solution itself.
Variation entropy solutions are presented in the continuous setting and are as such not restricted to a particular discretization.
Therefore, in contrast to the TVD property, numerical methods employing variation entropy concept may be based on a variational setting (e.g. finite element methods).
An important observation is that the TVD property may be derived from a specific variation entropy solution.

We summarize the main definitions and results formally (i.e. up to regularization).
Consider a conservation law for the unknown $\phi$ and flux $\mathbf{f}=\mathbf{f}(\phi)$:
\begin{align*}
    \partial_t \phi + \nabla \cdot \mathbf{f} = 0.
\end{align*}
The \textit{variation entropy} and the \textit{variation entropy solution} are defined as follows.

\begin{definition}
  The pair $(\eta, \mathbf{q})$ with $\eta=\eta(\nabla \phi)$ is termed a \textit{variation entropy--variation entropy flux pair} if
  \begin{itemize}
   \item $\eta$ is convex
   \item $\eta$ satisfies the homogeneity property: $\nabla \phi \cdot \dfrac{\partial \eta}{\partial \nabla \phi} = \eta$
   \item $\mathbf{q}$ satisfies $\mathbf{q} = \eta \dfrac{\partial \mathbf{f}}{\partial \phi}$.
  \end{itemize}
\end{definition}

\begin{definition}
We call the function $\phi$ a \textit{variation entropy solution} if it is an integral solution of the conservation law and formally satisfies for each variation entropy--variation entropy flux pair:
\begin{align*}
 \partial_t \eta
+\nabla \cdot  \mathbf{q} \leq 0.
\end{align*}
\end{definition}

\noindent An important class of variation entropies is formed by the semi-norms.\\

\noindent \begin{theorem}
  If a function is a semi-norm then it is a variation entropy.
\end{theorem}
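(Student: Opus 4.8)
The plan is to verify the two substantive requirements in the definition of a variation entropy directly from the seminorm axioms, since the third requirement, $\mathbf{q}=\eta\,\partial\mathbf{f}/\partial\phi$, merely prescribes the flux and imposes no condition on $\eta$ itself. Writing $\mathbf{z}=\nabla\phi$ for the argument, a seminorm $\eta$ obeys absolute homogeneity $\eta(\alpha\mathbf{z})=|\alpha|\,\eta(\mathbf{z})$ and subadditivity $\eta(\mathbf{z}_1+\mathbf{z}_2)\le\eta(\mathbf{z}_1)+\eta(\mathbf{z}_2)$. I would take these as the working definition and show that together they force both the convexity and the homogeneity identity.

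For convexity I would simply combine the two axioms: for $\lambda\in[0,1]$, subadditivity followed by absolute homogeneity with the nonnegative scalars $\lambda$ and $1-\lambda$ gives
\[
  \eta(\lambda\mathbf{z}_1+(1-\lambda)\mathbf{z}_2)\le\eta(\lambda\mathbf{z}_1)+\eta((1-\lambda)\mathbf{z}_2)=\lambda\,\eta(\mathbf{z}_1)+(1-\lambda)\,\eta(\mathbf{z}_2),
\]
so $\eta$ is convex. For the homogeneity property I would use that a seminorm is positively homogeneous of degree one, $\eta(t\mathbf{z})=t\,\eta(\mathbf{z})$ for $t>0$, differentiate in $t$ by the chain rule, and evaluate at $t=1$ to obtain Euler's identity
\[
  \frac{\partial\eta}{\partial\nabla\phi}\cdot\nabla\phi=\eta(\nabla\phi),
\]
which is precisely the required relation.

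The hard part will be the differentiability implicit in $\partial\eta/\partial\nabla\phi$: a general seminorm is not differentiable everywhere, the $\ell^1$ seminorm already failing at points with a vanishing component. I would address this by noting that a finite convex function is locally Lipschitz and hence, by Rademacher's theorem, differentiable almost everywhere, so Euler's identity holds pointwise off a set of measure zero. At the remaining kinks one replaces the gradient by any subgradient $\mathbf{s}\in\partial\eta(\nabla\phi)$, and the standard property that subgradients of a positively one-homogeneous convex function satisfy $\mathbf{s}\cdot\nabla\phi=\eta(\nabla\phi)$ restores the identity for every such $\mathbf{s}$. Since the surrounding framework is stated only up to regularization, this almost-everywhere statement — equivalently, the identity for a smooth regularization $\eta_\varepsilon$ of the seminorm — is exactly the level of rigor the theorem calls for.
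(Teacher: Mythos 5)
Your proposal is correct and follows essentially the same route as the paper: the paper observes that absolute homogeneity implies positive homogeneity of degree~1 and then invokes its characterization of variation entropies as positively $1$-homogeneous sub-additive functions, while you simply unpack the two supporting facts behind that characterization (convexity from sub-additivity plus homogeneity, and Euler's identity $\nabla\phi\cdot\partial\eta/\partial\nabla\phi=\eta$ from differentiating $\eta(t\,\nabla\phi)=t\,\eta(\nabla\phi)$ at $t=1$). Your additional care about non-differentiability via Rademacher's theorem and subgradients goes beyond the paper, which works only formally ``up to regularization,'' but it is consistent with that framework and is welcome rigor rather than a deviation.
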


\noindent The famous TVD property is special case of the following corollary.\\

\noindent \begin{corollary}In case of periodic or no-inflow boundaries the variation entropy decays in time via:
  \begin{align*}
 \int_\Omega \eta(\nabla \phi(\mathbf{x},t))~{\rm d}\Omega \leq \int_\Omega \eta(\nabla \phi_0(\mathbf{x}))~{\rm d}\Omega, \quad \text{for all $t>0$}.
\end{align*}
\end{corollary}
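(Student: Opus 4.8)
The plan is to integrate the pointwise variation entropy inequality over the spatial domain $\Omega$ and then in time, with the spatial flux contribution collapsing to a boundary term that the two boundary conditions render harmless. By the Theorem every semi-norm is an admissible variation entropy, so for a variation entropy solution the defining inequality
\begin{align*}
 \partial_t \eta + \nabla \cdot \mathbf{q} \leq 0
\end{align*}
holds. First I would integrate this over $\Omega$ and interchange the time derivative with the spatial integral in the first term, obtaining
\begin{align*}
 \frac{\mathrm{d}}{\mathrm{d}t}\int_\Omega \eta(\nabla\phi)~\mathrm{d}\Omega + \int_\Omega \nabla\cdot\mathbf{q}~\mathrm{d}\Omega \leq 0.
\end{align*}
Applying the divergence theorem to the second integral and inserting the flux relation $\mathbf{q} = \eta\,\partial\mathbf{f}/\partial\phi$ from the Definition converts it into
\begin{align*}
 \int_{\partial\Omega} \mathbf{q}\cdot\mathbf{n}~\mathrm{d}\Gamma = \int_{\partial\Omega} \eta(\nabla\phi)\,\Big(\frac{\partial\mathbf{f}}{\partial\phi}\cdot\mathbf{n}\Big)~\mathrm{d}\Gamma,
\end{align*}
with $\mathbf{n}$ the outward unit normal. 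Everything then reduces to controlling the sign of this boundary integral.

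The key observation is that $\partial\mathbf{f}/\partial\phi$ is precisely the characteristic (advection) velocity, so $(\partial\mathbf{f}/\partial\phi)\cdot\mathbf{n}$ measures the normal transport speed at the boundary. For a periodic domain the contributions from opposite faces carry equal values of the integrand but oppositely oriented normals, so they cancel pairwise and the boundary integral vanishes outright; here no sign condition on $\eta$ is needed. For a no-inflow boundary the defining property is $(\partial\mathbf{f}/\partial\phi)\cdot\mathbf{n}\geq 0$ (characteristics never enter $\Omega$), and since $\eta$ is a semi-norm it is non-negative, so the integrand is non-negative and the boundary integral is $\geq 0$. In both cases we therefore obtain
\begin{align*}
 \frac{\mathrm{d}}{\mathrm{d}t}\int_\Omega \eta(\nabla\phi)~\mathrm{d}\Omega \leq -\int_{\partial\Omega} \eta(\nabla\phi)\,\Big(\frac{\partial\mathbf{f}}{\partial\phi}\cdot\mathbf{n}\Big)~\mathrm{d}\Gamma \leq 0.
\end{align*}

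It remains to integrate this differential inequality from $0$ to $t$, which yields the claimed monotone decay with the initial datum $\phi(\cdot,0)=\phi_0$. The main obstacle is not the algebra but the regularity: the variation entropy inequality is only guaranteed in a weak/distributional (vanishing-viscosity) sense, so the interchange of $\partial_t$ with the spatial integral and the application of the divergence theorem are rigorous only up to the regularization implicit in the statement. Making the boundary argument precise — in particular, interpreting the no-inflow condition through the sign of $(\partial\mathbf{f}/\partial\phi)\cdot\mathbf{n}$ for a possibly discontinuous solution and confirming that the smoothed variation entropy flux retains the correct sign in the limit — is the delicate step; the remaining manipulations are routine once this boundary term is shown to be non-negative.
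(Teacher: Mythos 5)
Your proof is correct and follows essentially the same route as the paper: integrate the pointwise condition $\partial_t \eta + \nabla\cdot\mathbf{q}\leq 0$ over $\Omega$, dispose of the flux term using periodicity or the no-inflow sign of $\tfrac{\partial \mathbf{f}}{\partial \phi}\cdot\mathbf{n}$, and integrate in time. In fact you supply more detail than the paper, whose proof is a one-line ``integration of the variation entropy condition yields the claim''; your explicit treatment of the boundary integral (including the use of $\eta\geq 0$ for semi-norms in the no-inflow case) and your caveats about regularity are all consistent with, and sharper than, what is written there.
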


The remainder of the paper can be summarized as follows. 
Section \ref{sec:Preliminary} provides a brief summary of the entropy solutions in the classical sense.
In Section \ref{sec:The variation entropy condition} the concept of the variation entropy solutions is presented.
This section identifies the class of possible variation entropies.
Section \ref{sec:Characterization of variation entropies} discusses the selection of of variation entropies.
In particular, the well-known TVD property is here presented in an entropy context.
Finally, Section \ref{sec:Conclusions} draws the conclusions and outline avenues for future research.

\section{Entropy solutions in the classical sense}\label{sec:Preliminary}
Let $\Omega \subset \mathbb{R}^d$ be an open connected domain. Let us consider the scalar conservation problem: find $\phi: \Omega\times\mathcal{I} \rightarrow \mathbb{R}$ such that
\begin{align}\label{eq: nonlinear conservation law}
\partial_t \phi + \nabla \cdot \mathbf{f} =& ~0,\quad \quad (\mathbf{x},t) \in \Omega \times \mathcal{I},
\end{align}
subject to the initial condition $\phi(\mathbf{x},0) = \phi_0(\mathbf{x}) \in L^{\infty}(\Omega)$. 
Here $\mathbf{f}=\mathbf{f}(\phi) \in \mathcal{C}(\Omega,\mathbb{R})$ is the (nonlinear) flux, the spatial coordinate denotes $\mathbf{x} \in \Omega$, the time is $t \in \mathcal{I}=(0,T)$ with $T>0$. 
Solutions of (\ref{eq: nonlinear conservation law}) can contain discontinuities (shocks, rarefaction waves) which motivates the search for weak solutions. 
A weak solution $\phi \in L^{\infty}(\Omega,\mathbb{R}_+)$ of (\ref{eq: nonlinear conservation law}) satisfies
\begin{align}\label{eq: nonlinear conservation law: weak sol}
  \int_{\mathbb{R}_+} \int_{\Omega} \phi \partial_t \psi + \mathbf{f}(\phi) \nabla \psi~  {\rm d}\Omega~{\rm d}t + \int_{\Omega} \phi_0 \psi_0  ~{\rm d}\Omega=0
\end{align}
for all test functions $\psi \in \mathcal{C}_c^1(\Omega,\mathbb{R}^+)$ with $\psi_0(\mathbf{x})=\psi(\mathbf{x},0)$. 
Weak solutions are generally not unique.

Let us first consider the case of smooth solutions. Let $\eta =\eta(\phi) \in \mathcal{C}^1(\mathbb{R})$ be a convex function. Multiplying (\ref{eq: nonlinear conservation law, vanishing viscosity}) with $\partial \eta/\partial \phi$ shows that $\eta$ satisfies a conservation law
\begin{align}\label{eq: classical phi smooth}
\partial_t \eta + \nabla \cdot \mathbf{q} = 0,
\end{align}
when the flux $\mathbf{q}$ satisfies the \textit{compatibility condition}:
\begin{align}\label{eq: compatibility condition}
 \dfrac{\partial \mathbf{q}}{\partial \phi} = \dfrac{\partial \eta}{\partial \phi} \dfrac{\partial \mathbf{f}}{\partial \phi}.
\end{align}

\begin{definition}\label{def: VE}(Entropy/entropy-flux pair)
  The pair $(\eta, \mathbf{q})$ is called an entropy/entropy-flux pair when $\eta$ is a convex function and $\mathbf{q}$ fulfills the compatibility condition (\ref{eq: compatibility condition}). The function $\eta$ is referred to as the \textit{entropy function} and $\mathbf{q}$ as the \textit{entropy flux}. 
\end{definition}
When discontinuities appear the chain rule cannot be applied anymore and (\ref{eq: classical phi smooth}) is replaced by an inequality:
\begin{align}\label{eq: entropy condition}
\partial_t \eta + \nabla \cdot \mathbf{q} \leq 0.
\end{align}
The entropy condition (\ref{eq: entropy condition}) tells us that the entropy $\eta$ dissipates at shock waves. 
This inequality should be understood as
\begin{align}\label{eq: classical phi nonsmooth, rigorous}
 \displaystyle\int_{0}^{\infty}\int_\Omega \eta(\phi) \partial_t w + \mathbf{q}(\phi)\cdot \nabla w ~{\rm d}\Omega ~{\rm d}t \geq 0,
\end{align}
for all $w \in \mathcal{C}_c^\infty(\Omega \times (0,\infty)), w \geq 0$.
\begin{definition}\label{def: VE}(Entropy solution)
The function $\phi$ is called an \textit{entropy solution} or \textit{entropic} if it is an integral solution and additionally satisfies (\ref{eq: classical phi nonsmooth, rigorous}) for each entropy/entropy-flux pair.
\end{definition}
Consider solutions $\phi^\epsilon: \Omega\times\mathcal{I} \rightarrow \mathbb{R}$ of the approximate viscous problem:
\begin{align}\label{eq: nonlinear conservation law, vanishing viscosity}
\partial_t \phi^\epsilon + \nabla \cdot \mathbf{f}^\epsilon =&  ~\epsilon \Delta \phi^\epsilon,\quad \quad (\mathbf{x},t) \in \Omega \times \mathcal{I},
\end{align}
with $\mathbf{f}^\epsilon=\mathbf{f}(\phi^\epsilon)$.
The vanishing viscosity solution $\phi$ is now defined as: $\phi^\epsilon \rightarrow \phi$ a.e. for $\epsilon \rightarrow 0$.

\begin{theorem}
Vanishing viscosity solutions are entropy solutions.
\end{theorem}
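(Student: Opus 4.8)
The plan is to verify the two defining properties of an entropy solution (Definition of entropy solution) separately: that the limit $\phi$ is an integral solution of (\ref{eq: nonlinear conservation law}), and that it satisfies the entropy inequality (\ref{eq: classical phi nonsmooth, rigorous}) for every entropy/entropy-flux pair. Throughout I would work under a uniform bound $\|\phi^\epsilon\|_{L^\infty} \leq M$ independent of $\epsilon$, which is the standard consequence of a maximum principle for the viscous problem (\ref{eq: nonlinear conservation law, vanishing viscosity}). Combined with the pointwise a.e.\ convergence $\phi^\epsilon \to \phi$, this is precisely the input needed to pass all limits below by dominated convergence.

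For the weak-solution property, I would write the weak form of (\ref{eq: nonlinear conservation law, vanishing viscosity}) tested against $\psi \in \mathcal{C}_c^1$, integrating the viscous term by parts to produce a factor $\epsilon \int\!\int \phi^\epsilon \Delta \psi$. Since $\mathbf{f}$ is continuous and $\phi^\epsilon$ is uniformly bounded and converges a.e., the integrals involving $\phi^\epsilon$ and $\mathbf{f}(\phi^\epsilon)$ converge to their counterparts with $\phi$, while the viscous contribution vanishes because $\epsilon$ multiplies a bounded integral. The limit is exactly (\ref{eq: nonlinear conservation law: weak sol}), so $\phi$ is an integral solution.

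The heart of the argument is the entropy inequality. Fix a convex $\mathcal{C}^2$ entropy $\eta$ with compatible flux $\mathbf{q}$ satisfying (\ref{eq: compatibility condition}). Because $\phi^\epsilon$ is smooth (by parabolic regularity), I would multiply (\ref{eq: nonlinear conservation law, vanishing viscosity}) by $\partial\eta/\partial\phi$ and apply the chain rule exactly as in the smooth computation leading to (\ref{eq: classical phi smooth}), so that the convective terms collapse to $\partial_t \eta(\phi^\epsilon) + \nabla\cdot\mathbf{q}(\phi^\epsilon)$ by virtue of (\ref{eq: compatibility condition}). The viscous term splits as $\epsilon\,(\partial\eta/\partial\phi)\,\Delta\phi^\epsilon = \epsilon\,\Delta\eta(\phi^\epsilon) - \epsilon\,(\partial^2\eta/\partial\phi^2)\,|\nabla\phi^\epsilon|^2$, and the key observation is that convexity gives $\partial^2\eta/\partial\phi^2 \geq 0$, so the last term is non-positive and may be discarded to yield the pointwise inequality $\partial_t \eta(\phi^\epsilon) + \nabla\cdot\mathbf{q}(\phi^\epsilon) \leq \epsilon\,\Delta\eta(\phi^\epsilon)$.

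Finally I would test this inequality against an arbitrary non-negative $w \in \mathcal{C}_c^\infty(\Omega\times(0,\infty))$ and integrate by parts to transfer all derivatives onto $w$, obtaining
\[
\int_0^\infty\!\!\int_\Omega \eta(\phi^\epsilon)\,\partial_t w + \mathbf{q}(\phi^\epsilon)\cdot\nabla w \,{\rm d}\Omega\,{\rm d}t \;\geq\; -\epsilon \int_0^\infty\!\!\int_\Omega \eta(\phi^\epsilon)\,\Delta w\,{\rm d}\Omega\,{\rm d}t .
\]
Letting $\epsilon \to 0$ closes the argument: the right-hand side vanishes since $\eta(\phi^\epsilon)$ is uniformly bounded (continuity of $\eta$ on $[-M,M]$) while $\Delta w$ is bounded with compact support, so the integral is $O(1)$ and the prefactor $\epsilon$ kills it; the left-hand side converges to the same expression with $\phi$ replacing $\phi^\epsilon$ by dominated convergence, using continuity of $\eta$ and $\mathbf{q}$ with the a.e.\ convergence. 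This is exactly (\ref{eq: classical phi nonsmooth, rigorous}). The main obstacle I anticipate is not any algebraic step but securing the uniform $L^\infty$ bound and the regularity of $\phi^\epsilon$ that licenses the chain rule; these rest on maximum-principle and parabolic-regularity results for (\ref{eq: nonlinear conservation law, vanishing viscosity}) that must be cited or established, and without the uniform bound neither the dominated-convergence passage on the left nor the vanishing of the $\epsilon$-term on the right can be justified.
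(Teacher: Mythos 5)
Your proposal is correct and is essentially the argument the paper invokes: the paper's proof is a one-line citation to Evans noting that the result follows from convexity of $\eta$ and the compatibility condition (\ref{eq: compatibility condition}), and your write-up is precisely that standard vanishing-viscosity argument (multiply by $\partial\eta/\partial\phi$, discard the non-positive term $-\epsilon\,(\partial^2\eta/\partial\phi^2)|\nabla\phi^\epsilon|^2$, test against $w\geq 0$, and pass to the limit by dominated convergence under a uniform $L^\infty$ bound). Your closing caveat about the maximum principle and parabolic regularity correctly identifies the only ingredients the paper leaves to the cited reference.
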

\begin{proof}
This is a direct consequence of the convexity of $\eta$ and the compatibility condition (\ref{eq: compatibility condition}). For details see Evans \cite{Evans02}.
\end{proof}

\begin{theorem}
Entropy solutions of scalar conservation laws are unique.
\end{theorem}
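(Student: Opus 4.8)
The plan is to prove the stronger $L^1$-contraction estimate between any two entropy solutions, from which uniqueness follows at once by choosing identical initial data. The central device is Kruzhkov's method of doubling the variables, which exploits a specially convenient family of entropy/entropy-flux pairs.

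First I would specialize the entropy condition to the \emph{Kruzhkov entropies}. For each constant $k \in \mathbb{R}$ the pair
\begin{align*}
\eta(\phi) = |\phi - k|, \qquad \mathbf{q}(\phi) = \mathrm{sgn}(\phi - k)\,\brac{\mathbf{f}(\phi) - \mathbf{f}(k)}
\end{align*}
is an entropy/entropy-flux pair in the sense of Definition~\ref{def: VE}: convexity of $\eta$ is immediate, and the compatibility condition (\ref{eq: compatibility condition}) holds away from $\phi=k$. Hence every entropy solution satisfies, in the weak sense of (\ref{eq: classical phi nonsmooth, rigorous}),
\begin{align*}
\partial_t |\phi - k| + \nabla \cdot \brak{\mathrm{sgn}(\phi - k)\brac{\mathbf{f}(\phi) - \mathbf{f}(k)}} \leq 0,
\end{align*}
for every $k$. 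These inequalities encode the solution's entropy admissibility in a form linear in $k$, which is what makes the doubling argument work.

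Next, let $\phi$ and $\psi$ be two entropy solutions with initial data $\phi_0$ and $\psi_0$. I would regard them as functions of distinct variable pairs, $\phi=\phi(\mathbf{x},t)$ and $\psi=\psi(\mathbf{y},s)$, and write the Kruzhkov inequality for $\phi$ with the \emph{constant} taken to be $k=\psi(\mathbf{y},s)$, and symmetrically the inequality for $\psi$ with $k=\phi(\mathbf{x},t)$. The key observation is that, upon adding these two inequalities, the flux terms combine into the single symmetric quantity $\mathrm{sgn}(\phi-\psi)\brac{\mathbf{f}(\phi)-\mathbf{f}(\psi)}$, since reversing the roles of $\phi$ and $\psi$ flips both the sign factor and the flux difference, leaving the product invariant.

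The main step is then to localize near the diagonal by testing against a nonnegative function of product form $w(\mathbf{x},t,\mathbf{y},s)=\rho_\epsilon(\mathbf{x}-\mathbf{y})\,\sigma_\epsilon(t-s)\,\zeta\!\brac{\tfrac{\mathbf{x}+\mathbf{y}}{2},\tfrac{t+s}{2}}$, with standard mollifiers $\rho_\epsilon,\sigma_\epsilon$ and a cutoff $\zeta$, and passing to the limit $\epsilon\to0$. Here the $L^\infty$-bound on the solutions together with local Lipschitz continuity of $\mathbf{f}$ bound the flux differences by $|\phi-\psi|$. In the limit the doubled integral collapses onto the diagonal, yielding the distributional inequality
\begin{align*}
\partial_t |\phi - \psi| + \nabla \cdot \brak{\mathrm{sgn}(\phi - \psi)\brac{\mathbf{f}(\phi) - \mathbf{f}(\psi)}} \leq 0.
\end{align*}
Integrating over $\Omega$ and using periodic or no-inflow boundaries (as in the Corollary) annihilates the divergence term, giving the contraction
\begin{align*}
\int_\Omega |\phi(\mathbf{x},t) - \psi(\mathbf{x},t)|~\mathrm{d}\Omega \leq \int_\Omega |\phi_0(\mathbf{x}) - \psi_0(\mathbf{x})|~\mathrm{d}\Omega.
\end{align*}
Setting $\phi_0=\psi_0$ forces $\phi=\psi$ almost everywhere, which is the asserted uniqueness. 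The hardest part is the doubling limit itself: one must justify that the mollified nonlinear flux products converge to the correct diagonal expression, which requires careful use of Lebesgue points of $\phi$ and $\psi$ and of the interplay between the two mollification scales.
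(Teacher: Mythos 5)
Your proposal is correct and follows exactly the argument the paper defers to: the paper's proof of this theorem is simply a citation to Evans, whose uniqueness proof is the Kruzhkov $L^1$-contraction obtained by doubling the variables with the entropies $|\phi-k|$, precisely as you outline. No substantive difference in approach, and your identification of the mollification limit as the technically delicate step is accurate.
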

\begin{proof}
See Evans \cite{Evans02}.
\end{proof}
In case $\Omega$ is a periodic domain or has no-inflow boundaries (the inflow is characterized by $\partial \mathbf{f}/\partial \phi \cdot \mathbf{n} \leq 0$ where $\mathbf{n}$ is the outward normal), integration of (\ref{eq: entropy condition}) over $\Omega$ leads to a decay of the overall entropy:
\begin{align}
 \dfrac{\rm d}{{\rm d}t} \int_\Omega \eta(\phi(\mathbf{x},t))~{\rm d}\Omega \leq 0,
\end{align}
which implies:
\begin{align}
 \int_\Omega \eta(\phi(\mathbf{x},t))~{\rm d}\Omega \leq \int_\Omega \eta(\phi_0(\mathbf{x}))~{\rm d}\Omega,\quad \text{for all }t\geq 0.
\end{align}
Note that taking $\eta(\phi) = \phi^2/2$ leads to the usual $L^2-$stability from linear theory for hyperbolic equations.
We refer to \cite{levequebook1992} for more details.

\begin{rmk}
  We can also consider flux functions of the form $\mathbf{f} = \mathbf{f}(\phi,\nabla \phi)$.
  For the sake of simplicity we restrict ourselves to the case where the matrix $\partial \mathbf{f}/\partial \nabla \phi$ is of the form $-k \mathbf{I}$ with the scalar $k\geq 0$ and the identity matrix $\mathbf{I}$.
  The entropy condition (\ref{eq: entropy condition}) now takes the form
  \begin{align}
\partial_t \eta + \nabla \cdot \mathbf{q} -k \Delta \eta \leq 0.
\end{align}
\end{rmk}

\section{Variation entropy solutions}\label{sec:The variation entropy condition}
In this section we introduce the notion of variation entropy solutions.

\subsection{The concept}
We present the variation entropy concept for scalar conservation laws. The extension to systems of conservation laws is straightforward and we omit it here for the sake of simple notation. We consider the nonlinear conservation law
\begin{align}\label{eq: nonlinear conservation law  diffusion source}
\partial_t \phi + \nabla \cdot \mathbf{f} =&~ 0,\quad \quad (\mathbf{x},t) \in \Omega \times \mathcal{I},
\end{align}
with flux $\mathbf{f} = \mathbf{f}(\phi)$. Let us first consider smooth solutions.
The main idea is to look at the associated entropy relation of the spatial gradient (or variation) of the conservation law instead of that of the plain conservation law, i.e. consider the entropy relation of the system of equations:
\begin{align}\label{eq: grad of conservation law}
\nabla (\partial_t \phi) + \nabla (\nabla \cdot \mathbf{f}) =& ~\mathbf{0}.
\end{align}
The motivation of the approach stems from the observation that sharp layers in solution profiles are characterized by large gradients. By considering a convex function of the solution gradient these sharp layers can be identified.
In Figure \ref{fig:The concept of the variation entropy} we sketch the concept.
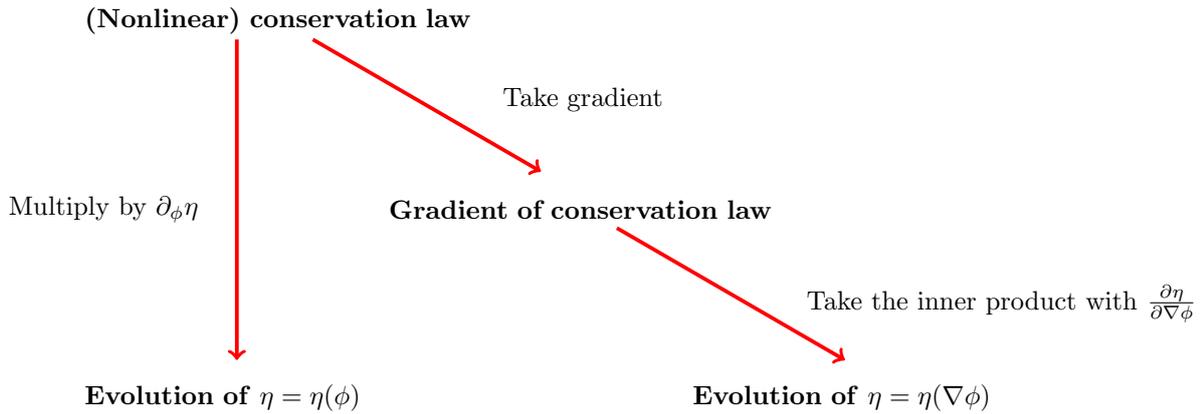
\begin{figure}[h]
  \begin{center}
\begin{tikzpicture}
\node[text width=7cm] at (-5.0,-1.75) {\textbf{(Nonlinear) conservation law}};
\node[text width=7cm] at (-5.0,-6.75) {\textbf{Evolution of} $\eta=\eta(\phi)$};
\node[text width=7cm] at (-6.0,-4.25) {Multiply by $\partial_{\phi} \eta$};
\draw[->, line width=0.5mm, red ] (-5.5,-2.00) -- (-2.5,-3.75);
\node[text width=7cm] at (0.5,-2.80) {Take gradient};
\node[text width=7cm] at (-1.0,-4.25) {\textbf{Gradient of conservation law}};
\draw[->, line width=0.5mm, red ] (-1.5,-4.50) -- (1.5,-6.25);
\node[text width=7cm] at (3.0,-6.75) {\textbf{Evolution of} $\eta=\eta(\nabla \phi)$};
\node[text width=7cm] at (4.5,-5.5) {Take the inner product with $\frac{\partial \eta}{\partial \nabla \phi}$};
\draw[->, line width=0.5mm, red ] (-6.5,-2.00) -- (-6.5,-6.25);
\end{tikzpicture}
  \end{center}
  \caption{The concept of the variation entropy for a smooth solution. In the classical approach one considers the entropy of the conservation law. The idea of the variation entropy approach is to first take the gradient of the conservation law and subsequently introducing the entropy concept.}
  \label{fig:The concept of the variation entropy}
\end{figure}

\begin{lem}\label{lem:evolution}(Evolution equation)
Let $\phi: \Omega\times\mathcal{I} \rightarrow \mathbb{R}$ be a smooth solution of the conservation law (\ref{eq: nonlinear conservation law  diffusion source}) and let $\eta:\mathbb{R}^d \rightarrow \mathbb{R}$ be a twice differentiable convex function of the gradient of $\phi$, i.e. $\eta=\eta(\nabla \phi)$. 
The temporal evolution of $\eta$ reads:
\begin{align}\label{variation entropy evolution}
\partial_t \eta
+\nabla \cdot  \mathbf{q} 
         =& \mathscr{A},
\end{align}
where the flux $\mathbf{q}$ and the non-conservative term $\mathscr{A}$ are respectively given by:
\begin{subequations}\label{variation entropy evolution terms}
  \begin{alignat}{2}
   \mathbf{q} =& \left(\frac{\partial \eta}{\partial \nabla \phi} \cdot \nabla \phi\right) \frac{\partial \mathbf{f}}{\partial \phi}, \\
 \mathscr{A}  =& \left(\mathbf{H}_{\nabla \phi} \eta \nabla \phi \right)\cdot\left(\mathbf{H}_{\mathbf{x}} \phi \frac{\partial \mathbf{f}}{\partial \phi} \right).
   \end{alignat}
\end{subequations}
The contraction is defined as $\mathbf{A}\colon \mathbf{B}={\rm Tr}(\mathbf{A}\mathbf{B}^T)$,  $(\mathbf{H}_{\mathbf{x}} \phi)_{mn}=\partial^2 \phi/\partial_{x_m}\partial_{x_n}$ is the (symmetric) Hessian of $\phi$ and $(\mathbf{H}_{\nabla \phi} \eta)_{mn}=\partial^2 \eta/\partial_{\nabla_m \phi}\partial_{\nabla_n \phi}$ is the (symmetric) Hessian of $\eta$.
\end{lem}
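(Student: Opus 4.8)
The plan is to follow the two-stage route sketched in Figure~\ref{fig:The concept of the variation entropy}: first take the spatial gradient of the conservation law, then contract with $\partial \eta/\partial \nabla \phi$, and finally reorganize the outcome into a conservative divergence $\nabla \cdot \mathbf{q}$ plus a non-conservative remainder $\mathscr{A}$. I would work in components with the summation convention and write $\nabla_m \phi = \partial \phi/\partial x_m$. Since the flux depends on $\phi$ only, the chain rule gives $\nabla \cdot \mathbf{f} = (\partial f_j/\partial \phi)\,\nabla_j \phi$. Differentiating the conservation law (\ref{eq: nonlinear conservation law  diffusion source}) in $x_i$ and commuting $\partial_t$ with $\nabla_i$ yields the component form of (\ref{eq: grad of conservation law}),
\begin{align*}
\partial_t (\nabla_i \phi) + \frac{\partial^2 f_j}{\partial \phi^2}\,\nabla_i \phi\,\nabla_j \phi + \frac{\partial f_j}{\partial \phi}\,(\mathbf{H}_{\mathbf{x}}\phi)_{ij} = 0,
\end{align*}
where the scalar $\partial^2 f_j/\partial \phi^2$ arises from differentiating $\partial f_j/\partial \phi$ and the Hessian $(\mathbf{H}_{\mathbf{x}}\phi)_{ij}$ from $\nabla_i \nabla_j \phi$.

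Next I would contract this identity with $\partial \eta/\partial \nabla_i \phi$. Because $\eta = \eta(\nabla \phi)$, the first term collapses to $\partial_t \eta$ by the chain rule, leaving
\begin{align*}
\partial_t \eta = -\frac{\partial \eta}{\partial \nabla_i \phi}\frac{\partial^2 f_j}{\partial \phi^2}\,\nabla_i \phi\,\nabla_j \phi - \frac{\partial \eta}{\partial \nabla_i \phi}\frac{\partial f_j}{\partial \phi}\,(\mathbf{H}_{\mathbf{x}}\phi)_{ij}.
\end{align*}
In parallel I would expand the proposed flux $q_j = \left(\frac{\partial \eta}{\partial \nabla_k \phi}\,\nabla_k \phi\right)\frac{\partial f_j}{\partial \phi}$ by the product rule. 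Its divergence $\nabla_j q_j$ reproduces exactly the two terms above (with $i$ relabeled to $k$): one from differentiating $\partial f_j/\partial \phi$ and one from the $\frac{\partial \eta}{\partial \nabla_k \phi}\,(\mathbf{H}_{\mathbf{x}}\phi)_{kj}$ contribution. It also produces a single extra term in which the Hessian of $\eta$ appears, through $\nabla_j\!\left(\partial \eta/\partial \nabla_k \phi\right) = (\mathbf{H}_{\nabla \phi}\eta)_{kl}(\mathbf{H}_{\mathbf{x}}\phi)_{lj}$.

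Adding $\partial_t \eta$ and $\nabla \cdot \mathbf{q}$ then cancels the two conservative terms, and the surviving Hessian-of-$\eta$ term is precisely
\begin{align*}
\mathscr{A} = (\mathbf{H}_{\nabla \phi}\eta)_{kl}\,\nabla_k \phi\,(\mathbf{H}_{\mathbf{x}}\phi)_{lj}\,\frac{\partial f_j}{\partial \phi},
\end{align*}
which, using the symmetry of both Hessians, is the claimed $\left(\mathbf{H}_{\nabla \phi}\eta\,\nabla \phi\right)\cdot\left(\mathbf{H}_{\mathbf{x}}\phi\,\partial \mathbf{f}/\partial \phi\right)$. The only genuine obstacle is the bookkeeping: one must keep the two distinct second derivatives apart, namely the scalar $\partial^2 f_j/\partial \phi^2$ coming from the nonlinearity of the flux versus the two matrix Hessians $\mathbf{H}_{\mathbf{x}}\phi$ and $\mathbf{H}_{\nabla \phi}\eta$, and carefully match which product-rule contributions are conservative and cancel against those in $\partial_t \eta$ and which single contribution remains non-conservative. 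Notably, neither convexity nor homogeneity of $\eta$ enters the identity itself; twice differentiability of $\eta$ and smoothness of $\phi$ suffice.
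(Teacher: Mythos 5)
Your proof is correct and follows essentially the same route as the paper: take the gradient of the conservation law, contract with $\partial\eta/\partial\nabla\phi$ so the time term collapses to $\partial_t\eta$, and use the product rule to split the flux contribution into the divergence of $\mathbf{q}$ plus the single Hessian-of-$\eta$ remainder $\mathscr{A}$. The only difference is presentational — you expand everything in index notation and verify the cancellation against the proposed $\mathbf{q}$, while the paper organizes the same computation through tensor identities for $\nabla\cdot(\nabla\mathbf{f}\,\partial\eta/\partial\nabla\phi)$ — and your closing observation that neither convexity nor homogeneity is needed for the identity is consistent with the paper, which invokes those properties only later.
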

Under certain conditions presented in following subsections, the function $\eta$ plays the role of an entropy. To distinguish from the classical entropy setting, we will use the term \textit{variation entropy} which will be defined later.

\begin{proof}(Lemma \ref{lem:evolution})
Changing the order of differentiation and subsequently taking the inner product of (\ref{eq: grad of conservation law}) with the \textit{variation entropy variables} $\partial \eta/\partial \nabla \phi$ (vector-valued) we find:
\begin{equation}\label{eq: abs of grad of conservation law}
\partial_t \eta +  \frac{\partial \eta}{\partial \nabla \phi} \cdot \nabla (\nabla \cdot \mathbf{f})  = 0.
\end{equation}
Consider the second term of (\ref{eq: abs of grad of conservation law}) in isolation. We interchange the gradient and divergence operators and use the product rule to arrive at:
\begin{align}\label{second term}
\frac{\partial \eta}{\partial \nabla \phi}\cdot  (\nabla( \nabla \cdot \mathbf{f} )) 
=&\frac{\partial \eta}{\partial \nabla \phi} \cdot(\nabla \cdot ( \nabla \mathbf{f} )^T ) \nn \\
=&\nabla \cdot \left ( \nabla \mathbf{f} \frac{\partial \eta}{\partial \nabla \phi} \right )   - \left(\nabla \left ( \frac{\partial \eta}{\partial \nabla \phi} \right )\right)^T : \nabla  \mathbf{f}.
\end{align}
Here we use the notation $\nabla \cdot \mathbf{T} = \partial_{x_j} T_{ij}$ for the divergence of a tensor.
The first term of (\ref{second term}) yields the flux term on the left-hand side of (\ref{variation entropy evolution}). 
Using the chain rule we get straightforwardly
\begin{align}\label{first term of second term}
\nabla \mathbf{f} \frac{\partial \eta}{\partial \nabla \phi}
=& \left(\frac{\partial \eta}{\partial \nabla \phi}  \cdot  \nabla \phi\right) \frac{\partial \mathbf{f}}{\partial \phi} . 
\end{align}
Next, by using the identity
\begin{align}
   \left(\nabla \left ( \frac{\partial \eta}{\partial \nabla \phi} \right )\right)^T=& \mathbf{H}_{\mathbf{x}} \phi  \mathbf{H}_{\eta},
  \end{align}
the second term of (\ref{second term}) can be written as
\begin{align}\label{second term of second term}
 \left(\nabla \left ( \frac{\partial \eta}{\partial \nabla \phi} \right )\right)^T : \nabla  \mathbf{f} = \left(\mathbf{H}_{\nabla \phi} \eta \nabla \phi \right)\cdot\left(\mathbf{H}_{\mathbf{x}} \phi \frac{\partial \mathbf{f}}{\partial \phi} \right).
  \end{align}
Combining (\ref{eq: abs of grad of conservation law}), (\ref{second term}), (\ref{first term of second term}) and (\ref{second term of second term}) leads to the claim.
 
\end{proof}

The flux term on the left-hand side of (\ref{variation entropy evolution}), composed of a convective component, redistributes $\eta$ over the domain.
In absence of the non-conservative term $\mathscr{A}$ the temporal evolution would mirror the classical entropy case: it would satisfy a conservation law.
We proceed with identifying the class of functions $\eta$ that closely resembles the classical entropy case.

\subsection{Characterization of variation entropies}
To closely match the classical entropy concept, the variation entropy should equip solutions with a dissipative condition. As such the influence of the advection term $\mathscr{A}$ on the right-hand side of (\ref{variation entropy evolution}) is unwanted. Thus it cannot be part of the variation entropy concept. As a second property, we copy the convexity demand of $\eta$ from the classical case. Furthermore, since the concept serves to identify sharp layers, we demand that $\eta$ vanishes in absence of spatial gradients. We propose the following design condition.\\

\noindent \textit{Design condition:}\\

\noindent We seek a function $\eta$ such that:
\begin{itemize}
 \item $\mathscr{A} = 0$ (dissipative condition),
 \item $\eta$ is convex,
 \item $\eta(0) = 0$.
\end{itemize}
The following lemma identifies the class of functions $\eta$ that satisfy the design condition.
\begin{lem}\label{thm advection}(Variation entropy design condition)
A convex function $\eta$ satisfies the design condition for a general conservation law if and only if it fulfills the homogeneity property:
  \begin{align}\label{eq: homogeneity property}
    \nabla \phi \cdot \dfrac{\partial \eta}{\partial \nabla \phi} = \eta.
  \end{align} 
\end{lem}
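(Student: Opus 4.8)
The plan is to start from the explicit non-conservative term
\[
\mathscr{A} = \left(\mathbf{H}_{\nabla \phi} \eta\, \nabla \phi\right)\cdot\left(\mathbf{H}_{\mathbf{x}} \phi\, \frac{\partial \mathbf{f}}{\partial \phi}\right)
\]
supplied by Lemma~\ref{lem:evolution}, and to extract the purely algebraic condition on $\eta$ that forces $\mathscr{A}$ to vanish. The phrase \emph{for a general conservation law} is essential here: at any fixed point one may independently prescribe the gradient $\nabla \phi$, the spatial Hessian $\mathbf{H}_{\mathbf{x}} \phi$ (an arbitrary symmetric matrix), and the flux derivative $\partial \mathbf{f}/\partial \phi$ (an arbitrary vector) --- for instance by choosing $\phi$ locally as a quadratic-plus-linear polynomial and selecting the flux appropriately. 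Hence requiring $\mathscr{A}=0$ for all admissible data is equivalent to requiring $\left(\mathbf{H}_{\nabla \phi} \eta\, \nabla \phi\right)\cdot\left(H \mathbf{a}\right)=0$ for every symmetric matrix $H$ and every vector $\mathbf{a}$.

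The second step is elementary linear algebra. Writing $\mathbf{v} := \mathbf{H}_{\nabla \phi} \eta\, \nabla \phi$, the admissible choice $H=\mathbf{I}$, $\mathbf{a}=\mathbf{v}$ already gives $\mathscr{A}=|\mathbf{v}|^{2}$, so $\mathscr{A}=0$ forces $\mathbf{v}=\mathbf{0}$; conversely $\mathbf{v}=\mathbf{0}$ annihilates $\mathscr{A}$ for every $H$ and $\mathbf{a}$. Therefore the dissipative condition $\mathscr{A}\equiv 0$ for a general conservation law is equivalent to
\[
\mathbf{H}_{\nabla \phi} \eta\, \nabla \phi = \mathbf{0}\quad\text{for every value of }\nabla \phi.
\]

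The final step identifies this as an integrability statement. Differentiating the scalar $g := \nabla \phi \cdot \partial \eta/\partial \nabla \phi - \eta$ componentwise with respect to $\nabla \phi$, the first-derivative contributions cancel and one is left precisely with $\mathbf{H}_{\nabla \phi} \eta\, \nabla \phi$; that is, this vector is the $\nabla \phi$-gradient of $g$. Consequently $\mathbf{H}_{\nabla \phi} \eta\, \nabla \phi = \mathbf{0}$ everywhere is equivalent to $g$ being constant, and evaluating $g$ at $\nabla \phi = \mathbf{0}$ together with $\eta(0)=0$ pins this constant to zero, yielding the homogeneity property (\ref{eq: homogeneity property}). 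For the converse, (\ref{eq: homogeneity property}) is Euler's identity for positive homogeneity of degree one; sending the scaling to zero recovers $\eta(0)=0$, while $g\equiv 0$ returns $\mathbf{H}_{\nabla \phi} \eta\, \nabla \phi=\mathbf{0}$ and hence $\mathscr{A}=0$. As convexity is retained as a standing hypothesis, both directions of the design condition are accounted for.

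I expect the main obstacle to be the rigorous justification of the \emph{general conservation law} reading, namely that $\nabla \phi$, $\mathbf{H}_{\mathbf{x}} \phi$ and $\partial \mathbf{f}/\partial \phi$ may legitimately be treated as independent free data at a point; a secondary subtlety is differentiability at $\nabla \phi = \mathbf{0}$, where a degree-one homogeneous $\eta$ typically has a singular or undefined Hessian, so the twice-differentiability hypothesis of Lemma~\ref{lem:evolution} must be understood away from the origin.
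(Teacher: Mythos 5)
Your proposal is correct and follows essentially the same route as the paper: reduce $\mathscr{A}=0$ for arbitrary data to $\mathbf{H}_{\nabla\phi}\eta\,\nabla\phi=\mathbf{0}$, recognize this vector as the $\nabla\phi$-gradient of $\nabla\phi\cdot\partial\eta/\partial\nabla\phi-\eta$, and fix the integration constant with $\eta(0)=0$; Euler's identity handles the converse. Your explicit choice $H=\mathbf{I}$, $\mathbf{a}=\mathbf{v}$ and the ``gradient of $g$'' phrasing merely tighten two steps the paper states more informally.
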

\begin{proof}
The advection term $\mathscr{A}$ vanishes for a general conservation law if and only if the vectors $\mathbf{H}_{\nabla \phi} \eta \nabla \phi$ and $\mathbf{H}_{\mathbf{x}} \phi \partial \mathbf{f} /\partial \phi$ are orthogonal. 
Since the sign of each of the components of the advective speed $\partial \mathbf{f} /\partial \phi$ is undetermined, the entries of $\mathbf{H}_{\mathbf{x}} \phi \partial \mathbf{f} /\partial \phi$ can be positive or negative. 
Thus the only problem-independent vector orthogonal to $\mathbf{H}_{\mathbf{x}} \phi \partial \mathbf{f} /\partial \phi$ is the zero vector. 
Hence seeking a function $\eta$ for which $\mathscr{A}$ vanishes for general conservation law is equivalent to searching a function $\eta$ for which
\begin{align}
 \mathbf{H}_{\nabla \phi} \eta \nabla \phi = 0.
\end{align}
Observe that this system of equations can be cast into the form:
  \begin{align}
    \dfrac{\partial}{\partial \nabla \phi}\left(\nabla \phi\cdot \dfrac{\partial \eta}{\partial \nabla \phi}\right) = \dfrac{\partial \eta}{\partial \nabla \phi}.
  \end{align}
  Integration of the $i$-th equation with respect to $\nabla_i \phi$ provides
  \begin{align}\label{eq: theorem, integration}
    \nabla \phi \cdot  \dfrac{\partial \eta}{\partial \nabla \phi} = \eta + c, 
  \end{align}
  where $c$ is a constant. The condition $\eta(0) = 0$ leads to $c=0$, and thus the homogeneity property (\ref{eq: homogeneity property}) follows.
\end{proof}
We call a function that satisfies the design condition a \textit{variation entropy}, as stated in the following definition.
\begin{definition}\label{def VE}(Variation entropy)
  A function $\eta=\eta(\nabla \phi)$ is termed a \textit{variation entropy} if
  \begin{itemize}
   \item $\eta$ is convex
   \item $\eta$ satisfies the homogeneity property (\ref{eq: homogeneity property}).
  \end{itemize}
\end{definition}

An alternative form of the homogeneity property is stated in the following lemma.
\begin{lem}\label{lemma:Homogeneous function}(Homogeneous function)
   A variation entropy is a positive homogeneous function of degree $1$:
     \begin{align}\label{eq: pos homogeneous function degree 1}
   \eta(\alpha \mathbf{v}) = \alpha \eta(\mathbf{v}),\quad \text{for all } \mathbf{v} \in \mathbb{R}^d, \alpha \geq 0.
  \end{align}
\end{lem}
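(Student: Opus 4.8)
The plan is to recognize the homogeneity property (\ref{eq: homogeneity property}) as precisely Euler's differential relation for a function that is positively homogeneous of degree one, and to integrate it along rays emanating from the origin.

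First I would fix an arbitrary $\mathbf{v} \in \mathbb{R}^d$ with $\mathbf{v} \neq \mathbf{0}$ and introduce the scalar auxiliary function $g(\alpha) := \eta(\alpha \mathbf{v})$ for $\alpha > 0$. By the chain rule one has $g'(\alpha) = \mathbf{v} \cdot (\partial \eta/\partial \nabla \phi)(\alpha \mathbf{v})$. Evaluating the homogeneity property (\ref{eq: homogeneity property}) at the point $\alpha \mathbf{v}$ gives $(\alpha \mathbf{v}) \cdot (\partial \eta/\partial \nabla \phi)(\alpha \mathbf{v}) = \eta(\alpha \mathbf{v})$, so that multiplying the chain-rule identity by $\alpha$ yields the first-order linear ordinary differential equation
\begin{align*}
  \alpha\, g'(\alpha) = g(\alpha), \qquad \alpha > 0.
\end{align*}

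Next I would solve this equation by noting that $\frac{\rm d}{{\rm d}\alpha}\left(g(\alpha)/\alpha\right) = (\alpha g'(\alpha) - g(\alpha))/\alpha^2 = 0$, hence $g(\alpha)/\alpha$ is constant on $(0,\infty)$ and equals its value at $\alpha = 1$, namely $g(1) = \eta(\mathbf{v})$. This delivers $\eta(\alpha \mathbf{v}) = \alpha\, \eta(\mathbf{v})$ for every $\alpha > 0$ and every $\mathbf{v} \neq \mathbf{0}$. The remaining case $\alpha = 0$ is settled by the normalization $\eta(\mathbf{0}) = 0$ established in the proof of Lemma \ref{thm advection} (equivalently, by letting $\alpha \to 0^+$ and invoking continuity), since then both sides of (\ref{eq: pos homogeneous function degree 1}) vanish; the degenerate case $\mathbf{v} = \mathbf{0}$ holds for the same reason.

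The main obstacle is not the calculus, which is essentially Euler's homogeneous function theorem, but the regularity of $\eta$. The homogeneity property presupposes the existence of the gradient $\partial \eta/\partial \nabla \phi$, yet a convex positively homogeneous function of degree one (a norm being the prototypical example) is generically non-differentiable at the origin. The differentiation of $g$ and the ODE argument must therefore be confined to $\alpha > 0$ with $\mathbf{v} \neq \mathbf{0}$, where $\eta$ is differentiable along the ray; it is precisely the convexity assumption of Definition \ref{def VE} that supplies the continuity needed to extend the identity to the origin. Handling this regularity point cleanly, rather than the integration itself, is where care is required.
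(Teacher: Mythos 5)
Your proposal is correct and is essentially the same argument as the paper's, which simply cites Euler's homogeneous function theorem: your ray-restriction and ODE computation is exactly the standard proof of the relevant (converse) direction of that theorem, supplemented by the normalization $\eta(\mathbf{0})=0$ from the proof of Lemma \ref{thm advection}. Your added remark on non-differentiability at the origin is a point the paper glosses over here but acknowledges elsewhere (in the remark following Lemma \ref{thm alterative form VE}), so no further action is needed.
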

\begin{proof}
This is a direct consequence of Euler's homogeneous function theorem.
\end{proof}
By employing Lemma \ref{lemma:Homogeneous function}, convexity is equivalent to sub-additivity.
\begin{lem}\label{lem: sub-additivity}(Sub-additivity)
 A positive homogeneous function of degree $1$ is convex if and only if it is sub-additive:
 \begin{align}
  \eta (\mathbf{v}_1+\mathbf{v}_2) \leq \eta(\mathbf{v}_1) + \eta(\mathbf{v}_2),\quad \text{for all }\mathbf{v}_1,\mathbf{v}_2 \in \mathbb{R}^d.
 \end{align}
\end{lem}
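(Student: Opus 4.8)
The plan is to prove the two implications separately, using the standing hypothesis that $\eta$ is positively homogeneous of degree $1$ (i.e. \eqref{eq: pos homogeneous function degree 1}, established in Lemma \ref{lemma:Homogeneous function}) as the bridge that converts scaling inside $\eta$ into the scalar factors appearing in a convex combination.

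First I would show that convexity implies sub-additivity. Given arbitrary $\mathbf{v}_1,\mathbf{v}_2 \in \mathbb{R}^d$, I apply convexity at the midpoint and then invoke homogeneity with $\alpha=\tfrac{1}{2}$:
\begin{align*}
 \tfrac{1}{2}\,\eta(\mathbf{v}_1+\mathbf{v}_2) = \eta\!\left(\tfrac{1}{2}\mathbf{v}_1+\tfrac{1}{2}\mathbf{v}_2\right) \leq \tfrac{1}{2}\,\eta(\mathbf{v}_1)+\tfrac{1}{2}\,\eta(\mathbf{v}_2).
\end{align*}
Multiplying by $2$ gives exactly the sub-additivity inequality.

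For the converse, suppose $\eta$ is sub-additive. To verify convexity I fix $\lambda\in[0,1]$ and split the convex combination into the two vectors $\lambda\mathbf{v}_1$ and $(1-\lambda)\mathbf{v}_2$, each a scaling by a nonnegative factor. Sub-additivity followed by homogeneity (legitimate since $\lambda\geq 0$ and $1-\lambda\geq 0$) yields
\begin{align*}
 \eta\!\left(\lambda\mathbf{v}_1+(1-\lambda)\mathbf{v}_2\right) \leq \eta(\lambda\mathbf{v}_1)+\eta((1-\lambda)\mathbf{v}_2) = \lambda\,\eta(\mathbf{v}_1)+(1-\lambda)\,\eta(\mathbf{v}_2),
\end{align*}
which is precisely the convexity inequality.

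I do not expect a genuine obstacle here, since this is the classical equivalence for gauges / sub-linear functionals. The only point that requires care is that homogeneity must be applied with a nonnegative scalar in each direction — the midpoint factor $\tfrac{1}{2}$ in the forward implication and the weights $\lambda,1-\lambda$ in the converse — which is guaranteed because convex-combination weights are nonnegative and because positive homogeneity of degree $1$ is exactly the hypothesis supplied by Lemma \ref{lemma:Homogeneous function}.
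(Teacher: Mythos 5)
Your proof is correct, and it is the standard gauge/sublinear-functional argument: midpoint convexity plus degree-one homogeneity gives sub-additivity, and sub-additivity plus homogeneity applied to the nonnegative weights $\lambda$ and $1-\lambda$ gives convexity back. The paper in fact states Lemma \ref{lem: sub-additivity} without any proof at all, so there is nothing to diverge from; your argument is exactly the one the authors are implicitly relying on, and it correctly uses only the positive homogeneity (\ref{eq: pos homogeneous function degree 1}) with $\alpha\geq 0$ (including the degenerate cases $\lambda=0$ and $\lambda=1$, where $\eta(\mathbf{0})=0$ follows from homogeneity with $\alpha=0$).
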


Combining Lemma \ref{lemma:Homogeneous function} and \ref{lem: sub-additivity} leads to the following theorem.
\begin{theorem}(Variation entropy)
  A function $\eta=\eta(\nabla \phi)$ is a \textit{variation entropy} if and only if
  \begin{itemize}
   \item $\eta$ is positive homogeneous function of degree $1$
   \item $\eta$ is sub-additive.
  \end{itemize}
\end{theorem}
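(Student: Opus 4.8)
The plan is to establish the biconditional by repackaging the two preceding lemmas, since the theorem merely trades the defining conditions of a variation entropy (convexity plus the homogeneity property (\ref{eq: homogeneity property}), cf.\ Definition \ref{def VE}) for the pair ``positive homogeneity of degree $1$'' and ``sub-additivity.'' Accordingly I would prove the two implications separately, in each case reducing to Lemma \ref{lemma:Homogeneous function} and Lemma \ref{lem: sub-additivity}.

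For the forward implication I would start from a variation entropy $\eta$ and first apply Lemma \ref{lemma:Homogeneous function}, which upgrades the differential homogeneity property to the scaling relation (\ref{eq: pos homogeneous function degree 1}); thus $\eta$ is positive homogeneous of degree $1$. Since convexity is built into Definition \ref{def VE}, the ``only if'' direction of Lemma \ref{lem: sub-additivity} then delivers sub-additivity. This yields both target conditions at once.

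For the reverse implication I would assume $\eta$ is positive homogeneous of degree $1$ and sub-additive. The ``if'' direction of Lemma \ref{lem: sub-additivity} returns convexity directly. It then remains to recover the differential identity (\ref{eq: homogeneity property}) from the scaling relation (\ref{eq: pos homogeneous function degree 1}); with convexity and (\ref{eq: homogeneity property}) in hand, Definition \ref{def VE} certifies $\eta$ as a variation entropy.

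The one genuinely non-routine link is that last step, because Lemma \ref{lemma:Homogeneous function} supplies only the direction ``homogeneity property $\Rightarrow$ degree-$1$ scaling,'' whereas here I need its converse. I expect to obtain it from the reverse direction of Euler's homogeneous function theorem: differentiating $\eta(\alpha \mathbf{v}) = \alpha \eta(\mathbf{v})$ with respect to $\alpha$ and evaluating at $\alpha = 1$ gives $\mathbf{v} \cdot \partial \eta/\partial \mathbf{v} = \eta$ pointwise. The only caveat is that this is a pointwise differential statement, so it presupposes differentiability of $\eta$ at the argument in question, which is consistent with the smoothness already assumed throughout (e.g.\ in Lemma \ref{lem:evolution}). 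No new estimate is required; the content is entirely in the two lemmas together with the two directions of Euler's identity.
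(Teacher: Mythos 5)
Your proposal is correct and follows essentially the same route as the paper, which simply states that the theorem follows by ``combining Lemma \ref{lemma:Homogeneous function} and Lemma \ref{lem: sub-additivity}'' without writing out the two implications. Your explicit remark that the reverse implication needs the converse direction of Euler's homogeneous function theorem (to recover the differential identity (\ref{eq: homogeneity property}) from the scaling relation (\ref{eq: pos homogeneous function degree 1})) is a detail the paper leaves implicit, and you handle it correctly under the standing smoothness assumptions.
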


\begin{theorem}\label{thm:semi-norm}(Semi-norm)
  If a function is a semi-norm then it is a variation entropy.
\end{theorem}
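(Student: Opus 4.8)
The plan is to reduce the statement directly to the preceding characterization theorem, which asserts that $\eta=\eta(\nabla\phi)$ is a variation entropy if and only if it is positive homogeneous of degree $1$ and sub-additive. Consequently, it suffices to verify that an arbitrary semi-norm $\eta:\mathbb{R}^d\to\mathbb{R}$ enjoys these two properties; no appeal to the Hessian-level computation of Lemma~\ref{lem:evolution} or the orthogonality argument of Lemma~\ref{thm advection} is then needed.

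First I would recall the defining axioms of a semi-norm on $\mathbb{R}^d$: non-negativity $\eta(\mathbf{v})\geq 0$, absolute homogeneity $\eta(\alpha\mathbf{v})=|\alpha|\,\eta(\mathbf{v})$ for every $\alpha\in\mathbb{R}$, and the triangle inequality $\eta(\mathbf{v}_1+\mathbf{v}_2)\leq\eta(\mathbf{v}_1)+\eta(\mathbf{v}_2)$. The second and third of these map onto the two hypotheses of the characterization theorem almost verbatim, so the bulk of the argument is merely this identification.

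Second, I would observe that restricting the absolute homogeneity to $\alpha\geq 0$, where $|\alpha|=\alpha$, yields exactly the positive homogeneity of degree $1$ recorded in (\ref{eq: pos homogeneous function degree 1}); in particular the choice $\alpha=0$ recovers $\eta(0)=0$, consistent with the design condition. The triangle inequality is, word for word, the sub-additivity hypothesis of Lemma~\ref{lem: sub-additivity}. With both properties in hand, the characterization theorem immediately upgrades $\eta$ to a variation entropy, which completes the proof. (Equivalently, one could route through Definition~\ref{def VE}: sub-additivity plus positive homogeneity gives convexity via Lemma~\ref{lem: sub-additivity}, and Euler's theorem, invoked in Lemma~\ref{lemma:Homogeneous function}, returns the homogeneity property (\ref{eq: homogeneity property}).)

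There is essentially no obstacle here: the statement is a corollary of the equivalence established just above, and the substantive work has already been carried out in the homogeneity–convexity–sub-additivity chain. The only point meriting a moment's care is the mismatch between the \emph{absolute} homogeneity built into the notion of a semi-norm and the one-sided \emph{positive} homogeneity demanded by the variation-entropy framework; one simply notes that the former specializes to the latter on the non-negative half-line, so the stronger semi-norm axiom loses nothing in the reduction.
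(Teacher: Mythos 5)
Your proposal is correct and follows essentially the same route as the paper: the authors likewise invoke the characterization of variation entropies as positively homogeneous, sub-additive functions and observe that the semi-norm axioms (absolute homogeneity restricted to $\alpha\geq 0$, plus the triangle inequality) deliver exactly these two properties. Your added remarks on $\alpha=0$ giving $\eta(0)=0$ and on the absolute-versus-positive homogeneity distinction only make explicit what the paper leaves implicit.
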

\begin{proof}
  The axioms of a seminorm are absolute homogeneity, i.e.
  \begin{align}\label{eq: abs homogeneous function}
    \eta(\alpha \mathbf{v}) = |\alpha| \eta(\mathbf{v}),\quad \text{for all } \mathbf{v} \in \mathbb{R}^d, \alpha \in \mathbb{R},
  \end{align}
  and sub-additivity. The absolute homogeneity demand (\ref{eq: abs homogeneous function}) is a specific case of the positive homogeneity (\ref{eq: pos homogeneous function degree 1}).
\end{proof}

\begin{theorem}\label{thm:linear function}(Linear function)
  If a function is linear then it is a variation entropy.
\end{theorem}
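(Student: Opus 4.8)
The plan is to verify directly that a linear $\eta$ meets the two requirements in Definition~\ref{def VE}, namely convexity and the homogeneity property~(\ref{eq: homogeneity property}). A linear function on $\mathbb{R}^d$ admits the representation $\eta(\mathbf{v}) = \mathbf{a}\cdot\mathbf{v}$ for a fixed coefficient vector $\mathbf{a}\in\mathbb{R}^d$, with $\mathbf{v}=\nabla\phi$ in the present context. The only computation the proof really needs is the observation that the derivative is constant, $\partial\eta/\partial\nabla\phi = \mathbf{a}$.

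From here both conditions are immediate. Convexity follows because linearity gives $\eta(\lambda\mathbf{v}_1 + (1-\lambda)\mathbf{v}_2) = \lambda\eta(\mathbf{v}_1) + (1-\lambda)\eta(\mathbf{v}_2)$ for all $\mathbf{v}_1,\mathbf{v}_2$ and $\lambda\in[0,1]$, so Jensen's inequality holds with equality and $\eta$ is convex (indeed affine along every line). For the homogeneity property I would substitute the constant derivative and read off $\nabla\phi\cdot(\partial\eta/\partial\nabla\phi) = \nabla\phi\cdot\mathbf{a} = \eta(\nabla\phi)$, which is exactly~(\ref{eq: homogeneity property}). Equivalently, one may verify the two clauses of the preceding characterization theorem: positive homogeneity of degree one from $\eta(\alpha\mathbf{v}) = \mathbf{a}\cdot(\alpha\mathbf{v}) = \alpha\,\eta(\mathbf{v})$, and sub-additivity with equality from $\eta(\mathbf{v}_1+\mathbf{v}_2) = \eta(\mathbf{v}_1)+\eta(\mathbf{v}_2)$.

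There is essentially no hard step; the only point needing care is the word \emph{linear}. The argument relies on $\eta(\mathbf{0}) = 0$ (the design condition $\eta(0)=0$ underlying Definition~\ref{def VE}), so $\eta$ must be genuinely linear rather than merely affine: for $\eta(\mathbf{v}) = \mathbf{a}\cdot\mathbf{v} + c$ the derivative is still $\mathbf{a}$, but then $\nabla\phi\cdot\mathbf{a} = \eta - c \neq \eta$ whenever $c\neq 0$, so the homogeneity property fails. A proper linear map satisfies $\eta(\mathbf{0})=0$ automatically, so flagging this distinction is the only subtlety. It is worth emphasizing that this result is not subsumed by Theorem~\ref{thm:semi-norm}: a nonzero linear function is negative somewhere and violates absolute homogeneity, since $\eta(-\mathbf{v}) = -\eta(\mathbf{v})$ rather than $\eta(\mathbf{v})$, hence it is not a semi-norm. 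Linear functions therefore supply a genuinely distinct family of variation entropies, which is precisely why the definition imposes positive rather than absolute homogeneity.
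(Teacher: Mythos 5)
Your proof is correct: the paper actually omits a proof of this theorem entirely (treating it as immediate), and your direct verification of convexity and of the homogeneity property $\nabla\phi\cdot(\partial\eta/\partial\nabla\phi)=\mathbf{a}\cdot\nabla\phi=\eta$ is exactly the argument that is implied. Your side remarks — that the result fails for affine functions with $c\neq 0$, and that linear functions are not covered by Theorem~\ref{thm:semi-norm} because they violate absolute homogeneity — are accurate and consistent with the paper's Figure~\ref{fig:tvd entropy}, which places linear functions outside the semi-norms.
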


Note that theorems \ref{thm:semi-norm} and \ref{thm:linear function} provide sufficient but not necessary conditions.
We remark that a linear combinations of variation entropies form again a variation entropy.
\begin{corollary}\label{corollary: lin comb}(Linear combination)
Let $\eta_k$ be variation entropies for $k=1,..,n$ for some integer $n$. 
The linear combination $\eta:=\sum_k \alpha_k \eta_k$ with $\alpha_k \in \mathbb{R}^+$ is a variation entropy.
\end{corollary}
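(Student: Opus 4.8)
The plan is to verify directly that $\eta := \sum_k \alpha_k \eta_k$ inherits the two defining properties of a variation entropy, using the characterization established in the preceding theorem: a function is a variation entropy if and only if it is positive homogeneous of degree $1$ and sub-additive. Since each $\eta_k$ already enjoys both properties, the task reduces to checking that they survive the formation of a nonnegative linear combination. I would proceed property by property rather than returning to the original convexity-plus-homogeneity definition, because both the homogeneity and the sub-additivity statements are \emph{inequalities or identities in $\eta$} that interact cleanly with summation.

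First I would establish positive homogeneity. For any $\mathbf{v} \in \mathbb{R}^d$ and $\alpha \geq 0$, I would compute
\begin{align*}
\eta(\alpha \mathbf{v}) = \sum_k \alpha_k \eta_k(\alpha \mathbf{v}) = \sum_k \alpha_k\, \alpha\, \eta_k(\mathbf{v}) = \alpha \sum_k \alpha_k \eta_k(\mathbf{v}) = \alpha\, \eta(\mathbf{v}),
\end{align*}
where the second equality invokes the positive homogeneity of each $\eta_k$ from Lemma \ref{lemma:Homogeneous function}. I note that this step would in fact go through for coefficients $\alpha_k$ of arbitrary sign, since the homogeneity factor simply passes linearly through the summation.

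Next I would establish sub-additivity. For any $\mathbf{v}_1,\mathbf{v}_2 \in \mathbb{R}^d$, the sub-additivity of each $\eta_k$ from Lemma \ref{lem: sub-additivity} gives $\eta_k(\mathbf{v}_1+\mathbf{v}_2) \leq \eta_k(\mathbf{v}_1) + \eta_k(\mathbf{v}_2)$. Multiplying by $\alpha_k \geq 0$ preserves the inequality, and summing over $k$ yields
\begin{align*}
\eta(\mathbf{v}_1+\mathbf{v}_2) = \sum_k \alpha_k \eta_k(\mathbf{v}_1+\mathbf{v}_2) \leq \sum_k \alpha_k \big(\eta_k(\mathbf{v}_1) + \eta_k(\mathbf{v}_2)\big) = \eta(\mathbf{v}_1) + \eta(\mathbf{v}_2).
\end{align*}

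There is no genuine obstacle to this argument; the one point deserving care is precisely where the hypothesis $\alpha_k \in \mathbb{R}^+$ is used. Positive homogeneity is inherited under any real coefficients, but the sub-additivity inequality is direction-sensitive, so multiplying by a negative coefficient would reverse it and could destroy the property. Hence nonnegativity of the $\alpha_k$ is essential, and it enters only at the second step. With both properties verified, the characterization theorem immediately identifies $\eta$ as a variation entropy, completing the argument.
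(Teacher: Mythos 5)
Your proof is correct and follows exactly the route the paper intends: the corollary is stated without proof immediately after the characterization theorem, precisely because positive homogeneity and sub-additivity are each manifestly preserved under nonnegative linear combinations, which is what you verify. Your added observation that nonnegativity of the $\alpha_k$ is needed only for the sub-additivity step (equivalently, for preserving convexity) is accurate and a worthwhile clarification.
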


\begin{lem}\label{thm alterative form VE}(Alternative form of the variation entropy)
A convex function $\eta=\eta(\nabla \phi)$ is a \textit{variation entropy} if and only if it is of the form
\begin{align}
 \eta=\eta(\nabla \phi) = \hat{\eta}(r,\boldsymbol{\theta}) = F(\boldsymbol{\theta})r,
\end{align}
where $F=F(\boldsymbol{\theta})$ is a scalar-valued function and the spherical coordinates are the radius $r$ and the angles $\boldsymbol{\theta}$ corresponding to the coordinates $\nabla \phi$. 
The convexity demand translates in the two-dimensional case to
\begin{align}
 F(\theta) + F''(\theta)  \geq 0.
\end{align}
\end{lem}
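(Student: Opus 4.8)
The plan is to separate the claim into two independent parts: the structural characterization that a variation entropy has the radially linear form $\eta = F(\boldsymbol{\theta})r$, and the two-dimensional convexity criterion $F + F'' \geq 0$.

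For the structural part I would invoke Lemma \ref{lemma:Homogeneous function}, by which a variation entropy is positively homogeneous of degree one. Writing $\nabla\phi = r\,\hat{\mathbf{e}}(\boldsymbol{\theta})$ in spherical coordinates, with $r = |\nabla\phi|$ the radius and $\hat{\mathbf{e}}(\boldsymbol{\theta})$ the unit vector fixed by the angles $\boldsymbol{\theta}$, the identity $\eta(\alpha\mathbf{v}) = \alpha\,\eta(\mathbf{v})$ evaluated at $\alpha = r$ and $\mathbf{v} = \hat{\mathbf{e}}(\boldsymbol{\theta})$ gives $\hat{\eta}(r,\boldsymbol{\theta}) = r\,\eta(\hat{\mathbf{e}}(\boldsymbol{\theta})) = r\,F(\boldsymbol{\theta})$ upon setting $F(\boldsymbol{\theta}) := \eta(\hat{\mathbf{e}}(\boldsymbol{\theta}))$. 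Conversely any $\eta = r\,F(\boldsymbol{\theta})$ is manifestly positively homogeneous of degree one, since scaling $\nabla\phi$ by $\alpha\geq 0$ scales $r$ by $\alpha$ and leaves $\boldsymbol{\theta}$ unchanged; together with the standing convexity hypothesis this makes $\eta$ a variation entropy in the sense of Definition \ref{def VE}. This direction is routine.

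The substantive part is the convexity criterion, and here the key observation is that homogeneity forces the Hessian to be degenerate. Indeed, the proof of Lemma \ref{thm advection} already shows that the homogeneity property is equivalent to $\mathbf{H}_{\nabla\phi}\eta\,\nabla\phi = \mathbf{0}$, so the Cartesian Hessian of a variation entropy annihilates the radial vector $\nabla\phi$. In two dimensions this means one of the two eigenvalues of the symmetric matrix $\mathbf{H}_{\nabla\phi}\eta$ vanishes identically, whence the remaining eigenvalue equals the trace $\operatorname{Tr}(\mathbf{H}_{\nabla\phi}\eta) = \Delta\eta$. Positive semi-definiteness of the Hessian, that is, convexity of $\eta$, is therefore equivalent to the single scalar inequality $\Delta\eta \geq 0$.

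It then remains to compute the Laplacian of $\eta = rF(\theta)$ in planar polar coordinates. Substituting into $\Delta\eta = r^{-1}\partial_r(r\,\partial_r\eta) + r^{-2}\partial_\theta^2\eta$ yields $\Delta\eta = (F(\theta) + F''(\theta))/r$, and since $r > 0$ wherever $\nabla\phi \neq \mathbf{0}$ the condition $\Delta\eta \geq 0$ is equivalent to $F(\theta) + F''(\theta) \geq 0$, as claimed. I expect the only delicate step to be the reduction of the matrix inequality to the scalar trace condition via the radial degeneracy of the Hessian; the polar Laplacian computation downstream is a short direct calculation, and one should keep in mind that the argument applies away from the origin $\nabla\phi = \mathbf{0}$, with the behaviour there controlled by continuity and homogeneity of $\eta$.
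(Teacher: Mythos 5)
Your proposal is correct, and the two halves of your argument relate to the paper's proof in different ways. For the structural part you and the paper travel essentially the same road: the paper converts the Euler identity $\mathbf{v}\cdot\partial\eta/\partial\mathbf{v}=\eta$ into the radial ODE $r\,\partial_r\hat{\eta}=\hat{\eta}$ and integrates to get $\hat{\eta}=F(\boldsymbol{\theta})r$, while you invoke Lemma \ref{lemma:Homogeneous function} and scale the unit vector directly; these are the same computation in two costumes, and your converse direction is already covered by Theorem 3.6 of the paper (positive homogeneity plus convexity/sub-additivity characterizes variation entropies), though strictly speaking it rests on the converse of Euler's theorem for differentiable $\eta$. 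The convexity criterion is where you genuinely diverge. The paper computes every entry of the polar-coordinate Hessian, exhibits it as $\tfrac{F+F''}{r}$ times an explicit rank-one projector, and reads off the eigenvalues $0$ and $(F+F'')/r$. You instead observe that the homogeneity property forces $\mathbf{H}_{\nabla\phi}\eta\,\nabla\phi=\mathbf{0}$ (as in the proof of Lemma \ref{thm advection}), so in two dimensions the symmetric Hessian has a guaranteed zero eigenvalue and positive semi-definiteness collapses to the scalar condition $\operatorname{Tr}(\mathbf{H}_{\nabla\phi}\eta)=\Delta\eta\geq 0$, after which the polar Laplacian $\Delta(rF)=(F+F'')/r$ finishes the job. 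Your route is shorter and conceptually cleaner, needing only the trace rather than the full matrix, and it makes transparent why exactly one eigenvalue survives; the paper's route is more computational but yields the explicit Hessian, which is reused elsewhere (e.g.\ in the quadratic-form examples) and generalizes more directly to the $d=3$ case of \ref{appendix: 3-dimensional case}, where the kernel is only one-dimensional and the trace alone no longer suffices. Both arguments correctly apply only away from $\nabla\phi=\mathbf{0}$, as you note.
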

The restrictions on the function $F$ in the $3$-dimensional case are more involved. 
We refer the interested reader to \ref{appendix: 3-dimensional case} for the details.

\begin{rmk}
Note that $\hat{\eta}=r F(\boldsymbol{\theta})$ is not differentiable in the origin.
However the origin is an important part of the domain since this is where $\phi$ attains a local extremum.
A possible way to bypass non-differentiability is to regularize $\eta$ and thus to work with an approximate variation entropy.
\end{rmk}
\begin{proof}(Lemma \ref{thm alterative form VE})
We start from the homogeneity property
\begin{align}\label{eq: theorem, integration2}
   \mathbf{v} \cdot \dfrac{\partial \eta}{\partial \mathbf{v}} = \eta, \quad \text{ for all } \mathbf{v} \in \mathbb{R}^d.
\end{align}  
  We now switch to spherical coordinates, i.e. the coordinates consist of a radial coordinate $r$ and $d-1$ angular coordinates $\theta \in [0, 2\pi), \varphi_m \in [0, \pi], m = 1,\dots, d-2$. 
  The transformation is given by:
  \begin{align}\label{eq: transformation spherical coordinates}
    v_1 =&  r \cos \theta \prod_{l=1}^{d-2} \sin \varphi_l, \nn\\
    v_2 =&  r \sin \theta \prod_{l=1}^{d-2} \sin \varphi_l, \nn \\
    v_m =&  r \cos \varphi_{m-2}  \prod_{l=m-1}^{d-2} \sin \varphi_l  \quad \text{ for }m=3,..., d-1, \nn \\
    v_d =& r \cos \varphi_{d-2}.
    \end{align}
  For $d = 3$ the third line drops out, and for $d=2$ both the third and last lines vanish.
  Both cases reduce to the well-known transformations.
  Consult \cite{blumenson1960derivation} for a derivation of a similar form.
  The direct consequence $r \partial \mathbf{v}/\partial r = \mathbf{v}$ provides
  \begin{align}
    r \dfrac{\partial \hat{\eta}}{\partial r}= r \dfrac{\partial \eta}{\partial \mathbf{v}} \cdot \dfrac{\partial \mathbf{v}}{\partial r} = \mathbf{v} \cdot\dfrac{\partial \eta}{\partial \mathbf{v}}.
  \end{align}
  This allows us to cast (\ref{eq: theorem, integration}) into the differential equation
  \begin{align}\label{eq: theorem, integration in r}
    r \dfrac{\partial \hat{\eta}}{\partial r} = \hat{\eta}.
  \end{align}
   The corresponding solution follows straightforwardly
  \begin{align}\label{eq: theorem, integration in r}
   \hat{\eta}=\hat{\eta}(r, \varphi_1, ..., \varphi_{d-2},\theta) = \hat{\eta}(r, \boldsymbol{\theta}) = F(\boldsymbol{\theta}) r,
  \end{align}
  with $F$ a scalar-valued function.\\
  
We characterize the convexity of $\eta$ by the positivity of the eigenvalues of the Hessian.
In the two-dimensional case we have $F=F(\theta)$. 
The Hessian in polar coordinates takes the form:
\begin{align}
 \mathbf{H}_{\nabla \phi} \eta =  \dfrac{F(\theta)+F''(\theta)}{r}
           \begin{pmatrix} 
             \sin^2 \theta & -\cos \theta \sin \theta \\
             -\cos \theta \sin \theta & \cos^2 \theta
           \end{pmatrix}.
\end{align}
Note that this is in line with the first demand. The eigenvalues $\lambda_1, \lambda_2$ of $\mathbf{H}_{\nabla \phi} \eta$ are
\begin{align}
 \lambda_1 =&~ 0,\\
 \lambda_2 =&~ \dfrac{F(\theta)+F''(\theta)}{r},
\end{align}
and convexity of $\eta$ follows when $F(\theta) + F''(\theta) \geq 0$.\\
\end{proof}

\subsection{Variation entropy-variation entropy flux pairs}
Employing the homogeneity property (\ref{eq: homogeneity property}) the evolution equation of a variation entropy takes the following form.
\begin{lem}\label{lem:evolution variation entropy}(Evolution equation variation entropy)
Let $\phi: \Omega\times\mathcal{I} \rightarrow \mathbb{R}$ be a smooth solution of the conservation law (\ref{eq: nonlinear conservation law  diffusion source}) and let $\eta:\mathbb{R}^d \rightarrow \mathbb{R}$ be a twice differentiable convex function of the gradient of $\phi$, i.e. $\eta=\eta(\nabla \phi)$. 
The temporal evolution of a variation entropy $\eta$ reads:
\begin{align}\label{variation entropy evolution 3}
\partial_t \eta
+\nabla \cdot  \mathbf{q} 
         =&~ 0,
\end{align}
where the flux $\mathbf{q}$ is given by:
\begin{align}\label{variation entropy evolution terms 3}
   \mathbf{q} = \eta \frac{\partial \mathbf{f}}{\partial \phi}.
\end{align}
\end{lem}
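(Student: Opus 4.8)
The plan is to obtain the result as a direct specialization of the general evolution equation in Lemma~\ref{lem:evolution}, using the defining homogeneity property of a variation entropy to simplify both the flux and the non-conservative term. Since $\eta$ is a variation entropy it is convex, twice differentiable, and satisfies the homogeneity property (\ref{eq: homogeneity property}). Applying Lemma~\ref{lem:evolution} to the smooth solution $\phi$ immediately yields
\[
\partial_t \eta + \nabla \cdot \mathbf{q} = \mathscr{A},
\]
with $\mathbf{q} = \left(\partial \eta/\partial \nabla \phi \cdot \nabla \phi\right) \partial \mathbf{f}/\partial \phi$ and $\mathscr{A} = \left(\mathbf{H}_{\nabla \phi}\eta\,\nabla \phi\right)\cdot\left(\mathbf{H}_{\mathbf{x}}\phi\,\partial \mathbf{f}/\partial \phi\right)$. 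It therefore remains only to reduce these two terms to their claimed forms.

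First I would simplify the flux. Substituting the homogeneity property (\ref{eq: homogeneity property}) directly into the scalar prefactor $\partial \eta/\partial \nabla \phi \cdot \nabla \phi$ of $\mathbf{q}$ replaces it by $\eta$, giving at once $\mathbf{q} = \eta\,\partial \mathbf{f}/\partial \phi$, which is exactly (\ref{variation entropy evolution terms 3}). No further manipulation is needed for this piece.

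Next I would show that the non-conservative term vanishes. The key observation is that differentiating the homogeneity relation $\nabla \phi \cdot \partial \eta/\partial \nabla \phi = \eta$ with respect to $\nabla_i \phi$ cancels the first-order contributions and leaves the identity $\left(\mathbf{H}_{\nabla \phi}\eta\,\nabla \phi\right)_i = 0$, i.e.\ $\mathbf{H}_{\nabla \phi}\eta\,\nabla \phi = \mathbf{0}$. This is precisely the identity already established in the proof of Lemma~\ref{thm advection}. Because the first factor of $\mathscr{A}$ is then identically zero, we have $\mathscr{A} = 0$ independently of the flux $\partial \mathbf{f}/\partial \phi$, and (\ref{variation entropy evolution}) collapses to the homogeneous balance law (\ref{variation entropy evolution 3}).

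I do not anticipate a genuine obstacle, since the statement is essentially a corollary of Lemmas~\ref{lem:evolution} and~\ref{thm advection}. The only point requiring care is the index bookkeeping in differentiating the homogeneity relation to confirm that $\mathbf{H}_{\nabla \phi}\eta$ annihilates $\nabla \phi$; once that identity is in hand, both the flux reduction and the vanishing of $\mathscr{A}$ follow immediately.
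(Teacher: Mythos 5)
Your proposal is correct and matches the paper's (implicit) argument: the paper introduces this lemma precisely as the specialization of Lemma~\ref{lem:evolution} obtained by ``employing the homogeneity property,'' which simplifies the flux prefactor to $\eta$ and, via the equivalence $\mathbf{H}_{\nabla \phi}\eta\,\nabla \phi = \mathbf{0}$ established in Lemma~\ref{thm advection}, annihilates $\mathscr{A}$. Your differentiation of the homogeneity relation to recover that identity is exactly the right bookkeeping, so nothing is missing.
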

\begin{definition}\label{def: VE}(Variation entropy--variation entropy flux pair)
  The pair $(\eta, \mathbf{q})$ is called a variation entropy--variation entropy flux pair provided
  \begin{itemize}
   \item $\eta$ is a variation entropy
   \item $\mathbf{q}$ satisfies the compatibility condition $\mathbf{q} = \eta \dfrac{\partial \mathbf{f}}{\partial \phi}$.
  \end{itemize}
\end{definition}
\begin{rmk}
 Taking the derivative of the compatibility condition with respect to $\nabla \phi$ yields for $\eta=\eta(\nabla \phi)$:
 \begin{align}
  \dfrac{\partial \mathbf{q}}{\partial \nabla \phi} = \dfrac{\partial \eta}{\partial \nabla \phi} \dfrac{\partial \mathbf{f}}{\partial \phi}.
 \end{align}
 This form highlights the relation with the compatibility condition of the classical case (\ref{eq: compatibility condition}) which states for $\eta=\eta(\phi)$:
 \begin{align}
  \dfrac{\partial \mathbf{q}}{\partial \phi} = \dfrac{\partial \eta}{\partial \phi} \dfrac{\partial \mathbf{f}}{\partial \phi}.
 \end{align} 
\end{rmk}

We proceed with presenting variation entropy solutions.
In practice hyperbolic problems with discontinuities may be interesting.
When discontinuities appear the chain rule cannot be applied anymore and thus the foregoing derivations are not
valid anymore.
In the classical entropy case at this point the conservation law of the entropy is replaced by an entropy inequality.
This is well-defined at discontinuities.
For the variation entropy concept, this is not possible at a discontinuity due to the gradient in $\eta(\nabla \phi)$.
As such the concept of a variation entropy solution can be applied to problems with sharp layers but is not defined in case of discontinuities.
We wish to emphasize that variation entropy solutions may be very suitable in numerics though.
A numerical method often approximates gradients at discontinuities (e.g. continuous finite-elements) and as such ill-defined gradients do not appear.
The variation entropy solution provides a local stability condition that may be used to eliminate the spurious oscillations near sharp layers or shocks.

In the following we proceed with a smooth solution to bypass this singularity problem.
We aim to closely resemble the classical entropy case in the following and accordingly replace the variation entropy evolution equation by the \textit{variation entropy condition}:
\begin{align}\label{variation entropy cond}
\partial_t \eta
+\nabla \cdot  \mathbf{q} 
         \leq 0.
\end{align}
This means that the variation entropy evolves according to the flux $\mathbf{q}$ but may experience sudden increases.
The rigorous counterpart of (\ref{variation entropy cond}) is:
\begin{align}\label{eq: rigorous VE}
\int_{0}^{\infty} \int_{\Omega} \eta\left( \partial_t w + \dfrac{\partial \mathbf{f}}{\partial \phi}\cdot \nabla w \right) {\rm d}\mathbf{x}{\rm d}t \geq 0.
\end{align}
In a similar fashion as for entropy solutions in the classical sense we define \textit{variation entropy solutions}.
\begin{definition}\label{def: VE}(Variation entropy solution)
We call the smooth function $\phi$ a \textit{variation entropy solution} if it is an integral solution and additionally satisfies (\ref{eq: rigorous VE}) for each variation entropy--variation entropy flux pair.
\end{definition}
Let us now consider the solutions $\phi^\epsilon: \Omega\times\mathcal{I} \rightarrow \mathbb{R}$ of the approximate viscous problem:
\begin{align}\label{eq: nonlinear conservation law, vanishing viscosity 2}
\partial_t \phi^\epsilon + \nabla \cdot \mathbf{f}^\epsilon =& ~ \epsilon \Delta \phi^\epsilon,\quad \quad (\mathbf{x},t) \in \Omega \times \mathcal{I},
\end{align}
with viscosity parameter $\epsilon >0$, flux $\mathbf{f}^\epsilon \equiv \mathbf{f}(\phi^\epsilon)$.
Let the vanishing viscosity solution denote $\phi = \lim_{\epsilon \rightarrow 0} \phi^\epsilon$.
\begin{theorem}\label{thm: VV solutions are VE}(Vanishing viscosity)
   The smooth function $\phi$ is a variation entropy solution.
\end{theorem}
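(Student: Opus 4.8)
The plan is to transplant the classical vanishing-viscosity argument (the analogue of Evans' proof) to the level of the gradient, using the evolution machinery already established in Lemma~\ref{lem:evolution variation entropy}. First I would run the ``take the gradient, then contract with $\partial\eta/\partial\nabla\phi$'' procedure of Lemma~\ref{lem:evolution} on the \emph{viscous} law (\ref{eq: nonlinear conservation law, vanishing viscosity 2}) instead of the inviscid one. Changing the order of differentiation, the temporal term again collapses to $\partial_t\eta(\nabla\phi^\epsilon)$ by the chain rule, and the two inviscid terms reproduce the computation of Lemma~\ref{lem:evolution} verbatim; since $\eta$ is a variation entropy the non-conservative term $\mathscr{A}$ vanishes and the flux reduces to $\mathbf{q}^\epsilon=\eta\,\partial\mathbf{f}/\partial\phi$ as in (\ref{variation entropy evolution terms 3}). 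What survives is the viscous contribution, giving the exact identity
\begin{align}
\partial_t \eta(\nabla\phi^\epsilon) + \nabla\cdot\mathbf{q}^\epsilon = \epsilon\,\frac{\partial\eta}{\partial\nabla\phi^\epsilon}\cdot\nabla(\Delta\phi^\epsilon).
\end{align}
The entire proof then hinges on showing that this viscous right-hand side is a spatial divergence minus a term of definite sign.

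Second, I would rewrite the viscous term so that a dissipative structure appears. Writing $H_{im}=\partial_i\partial_m\phi^\epsilon$ for the entries of $\mathbf{H}_{\mathbf{x}}\phi^\epsilon$ and using $\partial_m H_{im}=\partial_i\Delta\phi^\epsilon$, the product rule together with the chain rule $\partial_m(\partial\eta/\partial\nabla_i\phi)=(\mathbf{H}_{\nabla\phi}\eta)_{ik}H_{mk}$ yields
\begin{align}
\frac{\partial\eta}{\partial\nabla_i\phi^\epsilon}\,\partial_i\Delta\phi^\epsilon = \nabla\cdot\Big(\mathbf{H}_{\mathbf{x}}\phi^\epsilon\,\tfrac{\partial\eta}{\partial\nabla\phi^\epsilon}\Big) - \big(\mathbf{H}_{\nabla\phi}\eta\big):\big(\mathbf{H}_{\mathbf{x}}\phi^\epsilon\big)^2 .
\end{align}
Absorbing the divergence into the flux, this recasts the balance as
\begin{align}
\partial_t \eta(\nabla\phi^\epsilon) + \nabla\cdot\Big(\mathbf{q}^\epsilon - \epsilon\,\mathbf{H}_{\mathbf{x}}\phi^\epsilon\,\tfrac{\partial\eta}{\partial\nabla\phi^\epsilon}\Big) = -\,\epsilon\,\big(\mathbf{H}_{\nabla\phi}\eta\big):\big(\mathbf{H}_{\mathbf{x}}\phi^\epsilon\big)^2 .
\end{align}

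Third, and this is the crux, I would establish that the right-hand side is non-positive. The factor $\mathbf{H}_{\nabla\phi}\eta$ is positive semi-definite by convexity of $\eta$, while $(\mathbf{H}_{\mathbf{x}}\phi^\epsilon)^2$ is positive semi-definite as the square of a symmetric matrix. Because both matrices are symmetric the contraction is $\big(\mathbf{H}_{\nabla\phi}\eta\big):\big(\mathbf{H}_{\mathbf{x}}\phi^\epsilon\big)^2=\tr\!\big(\mathbf{H}_{\nabla\phi}\eta\,(\mathbf{H}_{\mathbf{x}}\phi^\epsilon)^2\big)$, and the trace of a product of two positive semi-definite matrices is non-negative (writing $\tr(\mathbf{A}\mathbf{B})=\|\mathbf{A}^{1/2}\mathbf{B}^{1/2}\|_F^2\geq 0$). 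Hence the dissipation term is $\geq 0$. This is precisely the variation-entropy analogue of the classical $\epsilon\,\eta''|\nabla\phi^\epsilon|^2\geq 0$, now expressed as a trace over the product of the entropy Hessian and the squared solution Hessian.

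Finally, I would test against $w\in\mathcal{C}_c^\infty(\Omega\times(0,\infty))$ with $w\geq 0$, integrate over $\Omega\times(0,\infty)$, and integrate by parts (boundary terms dropping by compact support); the sign of the dissipation then gives
\begin{align}
\int_{0}^{\infty}\!\!\int_\Omega \eta(\nabla\phi^\epsilon)\,\partial_t w + \mathbf{q}^\epsilon\cdot\nabla w \,\mathrm{d}\mathbf{x}\,\mathrm{d}t \;\geq\; \epsilon\!\int_{0}^{\infty}\!\!\int_\Omega \mathbf{H}_{\mathbf{x}}\phi^\epsilon\,\tfrac{\partial\eta}{\partial\nabla\phi^\epsilon}\cdot\nabla w \,\mathrm{d}\mathbf{x}\,\mathrm{d}t .
\end{align}
Passing $\epsilon\to 0$, the explicitly-$\epsilon$ term on the right vanishes and $\phi^\epsilon\to\phi$ (with $\eta$ and $\mathbf{q}$ continuous in their arguments in the smooth limit), recovering exactly the rigorous variation entropy condition (\ref{eq: rigorous VE}). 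I expect the two main obstacles to be (i) the clean justification of the sign of the trace term, which is the conceptual heart, and (ii) the limit passage: one must invoke enough regularity of the smooth limit to pass $\nabla\phi^\epsilon\to\nabla\phi$ and to control $\epsilon\,\mathbf{H}_{\mathbf{x}}\phi^\epsilon$, and — since $\eta=rF(\boldsymbol{\theta})$ fails to be differentiable at $\nabla\phi=0$ — it may be necessary to first replace $\eta$ by a regularization and remove it at the end.
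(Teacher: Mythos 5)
Your proposal follows essentially the same route as the paper's proof: the same gradient-then-contract derivation on the viscous equation, the same rewriting of the viscous term as $\epsilon\Delta\eta^\epsilon$ (your divergence of $\mathbf{H}_{\mathbf{x}}\phi^\epsilon\,\partial\eta/\partial\nabla\phi^\epsilon$) minus the sign-definite contraction $\left(\mathbf{H}_{\mathbf{x}}\phi^\epsilon\mathbf{H}_{\nabla\phi^\epsilon}\eta^\epsilon\right):\mathbf{H}_{\mathbf{x}}\phi^\epsilon\geq 0$ guaranteed by convexity, followed by testing against $w\geq 0$ and letting $\epsilon\to 0$. The only cosmetic difference is that the paper integrates by parts twice so that the surviving viscous term is $-\epsilon\,\eta\,\Delta w$, which sidesteps the need to control $\epsilon\,\mathbf{H}_{\mathbf{x}}\phi^\epsilon$ that you flag as obstacle (ii).
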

\begin{proof}
The proof basically is the variation entropy counterpart of the classical entropy case presented in Evans \cite{Evans02}.
The vanishing viscosity solution is an integral solution just as used for the classical entropy case. 
What remains is to show that it also satisfies the variation entropy condition. Taking the gradient of (\ref{eq: nonlinear conservation law, vanishing viscosity 2}) and subsequently the inner product with $\partial \eta/\partial \nabla \phi^\epsilon$ provides
\begin{align}\label{eq: variation entropy vanishing viscosity}
\partial_t \eta^\epsilon
+\nabla \cdot \mathbf{q}^\epsilon =\epsilon \frac{\partial \eta^\epsilon}{\partial \nabla \phi^\epsilon} \cdot \nabla ( \Delta \phi^\epsilon),
\end{align}
where the superscript $\epsilon$ indicates dependence on $\nabla \phi^\epsilon$.
Next, by applying the chain rule we can evaluate the expression as:
\begin{align}
  \Delta \eta^\epsilon =	&~ \nabla \cdot (\nabla \eta^\epsilon) \nn\\
		=&~ \nabla \cdot \left(  \mathbf{H}_{\mathbf{x}} \phi^\epsilon \frac{\partial \eta^\epsilon}{\partial \nabla \phi^\epsilon} \right) \nn \\
		=&~ \left(\mathbf{H}_{\mathbf{x}} \phi^\epsilon \mathbf{H}_{\nabla \phi^\epsilon} \eta^\epsilon\right) : \mathbf{H}_{\mathbf{x}} \phi^\epsilon + \dfrac{\partial \eta^\epsilon}{\partial \nabla \phi^\epsilon} \cdot \nabla  (\Delta \phi^\epsilon).
\end{align}
Substitution yields:
\begin{align}\label{variation entropy evolution vv}
\partial_t \eta^\epsilon
+\nabla \cdot  \mathbf{q}^\epsilon 
         =&~ \epsilon \Delta \eta^\epsilon -\epsilon~ \mathbf{H}_{\mathbf{x}} \phi^\epsilon \mathbf{H}_{\nabla \phi^\epsilon} \eta^\epsilon \mathbf{H}_{\mathbf{x}} \phi^\epsilon,
\end{align}
where the flux $\mathbf{q}^\epsilon$ satisfies the compatibility condition (\ref{variation entropy evolution terms 3}) and where the superscript $\epsilon$ denotes dependence on $\phi^\epsilon$ instead of $\phi$.
By the convexity of $\eta^\epsilon$ the second term on the right-hand side\footnote{Note that its classical entropy counterpart is $\nabla \phi^\epsilon \cdot \nabla \phi^\epsilon \partial^2 \eta^\epsilon/\partial (\phi^\epsilon)^2$.} has the sign:
\begin{align}\label{eq: vanishing viscosity limit positive}
  \left(\mathbf{H}_{\mathbf{x}} \phi^\epsilon \mathbf{H}_{\nabla \phi^\epsilon} \eta^\epsilon\right) : \mathbf{H}_{\mathbf{x}} \phi^\epsilon \geq 0,
\end{align}
which implies
\begin{align}\label{variation entropy evolution vvv}
\partial_t \eta^\epsilon
+\nabla \cdot  \mathbf{q}^\epsilon 
         \leq &~ \epsilon \Delta \eta^\epsilon.
\end{align}
We proceed by multiplying (\ref{variation entropy evolution vv}) by $w \in \mathcal{C}_c^{\infty}(\Omega \times (0,\infty)), w \geq 0$, integrate and use (\ref{eq: vanishing viscosity limit positive}):
\begin{align}
\int_{0}^{\infty} \int_{\Omega} \eta\left( \partial_t w + \dfrac{\partial \mathbf{f}}{\partial \phi}\cdot \nabla w \right) {\rm d}\mathbf{x}{\rm d}t =&~ \int_{0}^{\infty} \int_{\Omega}  \epsilon\left(\mathbf{H}_{\mathbf{x}}\phi \mathbf{H}_{\nabla \phi}\eta\right) : \mathbf{H}_{\mathbf{x}}\phi w-  \epsilon \eta \Delta w {\rm d}\mathbf{x}{\rm d}t  \nn\\
          \geq&~ - \int_{0}^{\infty} \int_{\Omega} \epsilon \eta \Delta w {\rm d}\mathbf{x}{\rm d}t .
\end{align}
Applying the dominated convergence theorem we get
\begin{align}\label{eq: rigorous VE2}
\int_{0}^{\infty} \int_{\Omega} \eta\left( \partial_t w + \dfrac{\partial \mathbf{f}}{\partial \phi}\cdot \nabla w \right) {\rm d}\mathbf{x}{\rm d}t \geq 0.
\end{align}
\end{proof}
\begin{corollary}\label{cor: total} (Decay of variation entropy)
  Let $\Omega$ be a periodic domain or have no-inflow boundaries and let $\phi$ be a smooth function.
  The variation entropy of a vanishing viscosity solution decays in time:
  \begin{align}
 \int_\Omega \eta(\nabla \phi(\mathbf{x},t))~{\rm d}\Omega \leq \int_\Omega \eta(\nabla \phi_0(\mathbf{x}))~{\rm d}\Omega, \quad \text{for all $t>0$}.
\end{align}
\end{corollary}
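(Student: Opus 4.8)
The plan is to replicate, at the level of the variation entropy, the overall-entropy decay argument already carried out for the classical case in Section~\ref{sec:Preliminary}. Since $\phi$ is assumed smooth, I would begin from the pointwise variation entropy condition
\begin{align*}
\partial_t \eta + \nabla \cdot \mathbf{q} \leq 0,
\end{align*}
which is (\ref{variation entropy cond}); for the vanishing viscosity solution it follows by passing to the limit $\epsilon \to 0$ in (\ref{variation entropy evolution vvv}), whose right-hand side $\epsilon \Delta \eta^\epsilon$ vanishes, so that the limiting inequality holds with the compatible flux $\mathbf{q} = \eta\, \partial \mathbf{f}/\partial \phi$ of (\ref{variation entropy evolution terms 3}). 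Integrating this inequality over $\Omega$ and applying the divergence theorem gives
\begin{align*}
\dfrac{\rm d}{{\rm d}t}\int_\Omega \eta(\nabla \phi)\, {\rm d}\Omega \leq -\int_{\partial \Omega} \mathbf{q}\cdot \mathbf{n}\, {\rm d}S,
\end{align*}
so the whole result reduces to showing that the boundary flux $\int_{\partial\Omega}\mathbf{q}\cdot\mathbf{n}\,{\rm d}S$ is nonnegative.

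Next I would dispose of the two boundary situations separately. On a periodic domain the outward fluxes on opposite faces cancel identically, so the boundary integral is zero. On a domain with no-inflow boundaries I would use the compatibility form of the flux to write
\begin{align*}
\mathbf{q}\cdot \mathbf{n} = \eta(\nabla \phi)\,\Big(\dfrac{\partial \mathbf{f}}{\partial \phi}\cdot \mathbf{n}\Big),
\end{align*}
and then invoke the no-inflow characterization $\partial \mathbf{f}/\partial \phi \cdot \mathbf{n} \geq 0$ together with the nonnegativity $\eta \geq 0$ to conclude $\mathbf{q}\cdot\mathbf{n}\geq 0$ pointwise on $\partial\Omega$. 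In either case the right-hand side above is $\leq 0$, so $t \mapsto \int_\Omega \eta(\nabla\phi)\,{\rm d}\Omega$ is non-increasing, and integrating in time from $0$ to $t$ delivers the claimed inequality against the initial datum $\phi_0$.

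The hard part will be justifying the sign of the boundary term in the no-inflow case, which hinges on $\eta\geq 0$. This is not automatic for an arbitrary variation entropy: by Theorem~\ref{thm:linear function} linear functions qualify, and these change sign. However, for the class of primary interest, the semi-norms of Theorem~\ref{thm:semi-norm}, nonnegativity is immediate, since absolute homogeneity gives $\eta(-\mathbf{v})=\eta(\mathbf{v})$ and sub-additivity then yields $2\eta(\mathbf{v})\geq \eta(\mathbf{0})=0$. I would therefore read the corollary for variation entropies with $\eta\geq 0$ (in particular all semi-norms), where the argument closes cleanly. A secondary technical point, already implicit above, is that passing from the viscous identity to the inviscid inequality requires controlling the sign term $(\mathbf{H}_{\mathbf{x}}\phi^\epsilon \mathbf{H}_{\nabla \phi^\epsilon}\eta^\epsilon):\mathbf{H}_{\mathbf{x}}\phi^\epsilon \geq 0$ from (\ref{eq: vanishing viscosity limit positive}) in the limit; for smooth $\phi$ this is routine once the limiting inequality (\ref{variation entropy cond}) is in hand.
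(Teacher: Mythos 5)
Your argument is correct and takes essentially the same route as the paper: the paper's own proof simply integrates the pointwise condition (\ref{variation entropy cond}) over $\Omega$ and suppresses the boundary term, which is exactly the skeleton you flesh out with the divergence theorem and the two boundary cases. Your additional observation---that in the no-inflow case the sign of $\mathbf{q}\cdot\mathbf{n} = \eta\,\bigl(\partial \mathbf{f}/\partial\phi \cdot \mathbf{n}\bigr)$ hinges on $\eta \geq 0$, which holds for semi-norms but fails for linear variation entropies---is a genuine refinement that the paper's one-line proof does not address.
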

\begin{proof}
Integration of (\ref{variation entropy cond}) yields:
\begin{align}\label{eq: TVD constraint eta}
 \dfrac{{\rm d}}{{\rm d}t}\int_\Omega \eta(\nabla \phi(\mathbf{x},t)) ~{\rm d}\Omega \leq 0\quad  \Rightarrow \quad \int_\Omega \eta(\nabla \phi(\mathbf{x},t))~{\rm d}\Omega \leq \int_\Omega \eta(\nabla \phi_0(\mathbf{x}))~{\rm d}\Omega,
\end{align}
for all $t>0$.
\end{proof}

\subsection{Augmented conservation laws}
We consider `augmented conservation laws', i.e. PDEs with a convective, diffusive
and source component of the form: 
 \begin{align}\label{eq: nonlinear conservation law diffusion source aug}
\partial_t \phi + \nabla \cdot \mathbf{f} =& ~s,\quad \quad (\mathbf{x},t) \in \Omega \times \mathcal{I},
\end{align}
with flux $\mathbf{f}=\mathbf{f}(\phi, \nabla \phi)$ and source term $s=s(\phi)$. Here we assume that the matrix $\partial \mathbf{f}/\partial \nabla \phi$ is negative semi-definite.
Let $(\eta, \mathbf{q})$ be a variation entropy/variation entropy flux pair.
A straightforward computation shows that using the homogeneity property (\ref{eq: homogeneity property}), the evolution equation takes the form:
\begin{align}\label{variation entropy evolution aug}
\partial_t \eta
+\nabla \cdot  \mathbf{q} 
         =& \mathscr{D}+\mathscr{S},
\end{align}
where the flux $\mathbf{q}$ and the non-conservative terms $\mathscr{D}$ and $\mathscr{S}$ are respectively given by:
\begin{subequations}\label{variation entropy evolution terms}
  \begin{alignat}{2}
   \mathbf{q} =& \eta \frac{\partial \mathbf{f}}{\partial \phi} + \frac{\partial \mathbf{f}}{\partial \nabla \phi}  \nabla \eta, \\
 \mathscr{D}  =& \left(\mathbf{H}_{\mathbf{x}} \phi \mathbf{H}_{\nabla \phi} \eta \right): \left(\frac{\partial \mathbf{f}}{\partial \nabla \phi} \mathbf{H}_{\mathbf{x}} \phi \right),\\
 \mathscr{S}  =& \frac{\partial s}{\partial \phi} \eta.
  \end{alignat}
\end{subequations}

We emphasize that this form closely resembles an augmented conservation law with convection, diffusion and reaction components.
The reaction term is the only term that can create variation entropy (remark that it vanishes for a constant source $s$).
Note that negative eigenvalues of the diffusion matrix are an essential requirement for well-posedness. 
Positive eigenvalues create variation entropy which leads to a blow-up of the solutions.
In the next lemma we show that the term $\mathscr{D}$ on the right-hand side of (\ref{variation entropy evolution aug}) destroys variation entropy.

\begin{lem}\label{prop diffusion}(Negativity of $\mathscr{D}$) The term $\mathscr{D}$
on the right-hand side of (\ref{variation entropy evolution aug}) contributes to dissipation of the variation entropy, i.e. it takes negative values only.
\end{lem}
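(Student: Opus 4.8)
The plan is to unravel the tensor contraction in the definition of $\mathscr{D}$ and reduce the claim to the standard fact that the trace of the product of a positive semi-definite and a negative semi-definite matrix is non-positive. To fix notation I would write $\mathbf{H} := \mathbf{H}_{\mathbf{x}}\phi$ for the (symmetric) Hessian of $\phi$, $\mathbf{G} := \mathbf{H}_{\nabla\phi}\eta$ for the Hessian of the variation entropy, and $\mathbf{K} := \partial\mathbf{f}/\partial\nabla\phi$ for the diffusion matrix. By convexity of $\eta$ (Definition \ref{def VE}) the matrix $\mathbf{G}$ is symmetric and positive semi-definite, while by the standing assumption $\mathbf{K}$ is negative semi-definite, i.e. $\mathbf{v}^T\mathbf{K}\mathbf{v}\leq 0$ for all $\mathbf{v}$, which is equivalent to its symmetric part $\mathbf{K}_s := \tfrac{1}{2}(\mathbf{K}+\mathbf{K}^T)$ being negative semi-definite.

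The first step is to apply the contraction identity $\mathbf{A}\colon\mathbf{B}={\rm Tr}(\mathbf{A}\mathbf{B}^T)$ together with $\mathbf{H}^T=\mathbf{H}$ to rewrite
\[
\mathscr{D} = (\mathbf{H}\mathbf{G})\colon(\mathbf{K}\mathbf{H}) = {\rm Tr}\big(\mathbf{H}\mathbf{G}(\mathbf{K}\mathbf{H})^T\big) = {\rm Tr}(\mathbf{H}\mathbf{G}\mathbf{H}\mathbf{K}^T).
\]
The second step is to introduce $\mathbf{P} := \mathbf{H}\mathbf{G}\mathbf{H}$ and note that it is symmetric (since $\mathbf{H}$ and $\mathbf{G}$ are) and positive semi-definite, because $\mathbf{v}^T\mathbf{P}\mathbf{v} = (\mathbf{H}\mathbf{v})^T\mathbf{G}(\mathbf{H}\mathbf{v}) \geq 0$ for every $\mathbf{v}$ by the positive semi-definiteness of $\mathbf{G}$. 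This is exactly the structural fact already invoked in (\ref{eq: vanishing viscosity limit positive}), which corresponds to the special case $\mathbf{K}=-\epsilon\mathbf{I}$, where $\mathscr{D}=-\epsilon\,{\rm Tr}(\mathbf{P})\leq 0$.

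The third step exploits the symmetry of $\mathbf{P}$: one has ${\rm Tr}(\mathbf{P}\mathbf{K}^T)={\rm Tr}(\mathbf{P}\mathbf{K})={\rm Tr}(\mathbf{P}\mathbf{K}_s)$, since the contribution of the antisymmetric part $\tfrac{1}{2}(\mathbf{K}-\mathbf{K}^T)$ to the trace against a symmetric matrix vanishes. The conclusion then follows from the congruence identity ${\rm Tr}(\mathbf{P}\mathbf{K}_s)={\rm Tr}(\mathbf{P}^{1/2}\mathbf{K}_s\mathbf{P}^{1/2})$, because $\mathbf{P}^{1/2}\mathbf{K}_s\mathbf{P}^{1/2}$ inherits negative semi-definiteness from $\mathbf{K}_s$ and hence has non-positive trace, giving $\mathscr{D}\leq 0$.

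The main obstacle I anticipate is purely bookkeeping rather than conceptual: keeping the ordering of the factors in the contraction straight and correctly accounting for the possible non-symmetry of $\mathbf{K}$, so that the reduction to its symmetric part $\mathbf{K}_s$ is rigorously justified. Once $\mathscr{D}$ has been recast as ${\rm Tr}(\mathbf{P}\mathbf{K}_s)$ with $\mathbf{P}$ positive semi-definite and $\mathbf{K}_s$ negative semi-definite, the sign is immediate.
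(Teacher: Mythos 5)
Your proof is correct and follows essentially the same route as the paper: both arguments reduce $\mathscr{D}$ to the trace of a product of a positive semi-definite matrix (built from the convexity of $\eta$) with the negative semi-definite diffusion matrix, and conclude via a congruence transformation --- the paper factors $\mathbf{H}_{\nabla\phi}\eta=\mathbf{M}^T\mathbf{M}$ where you use $\mathbf{P}^{1/2}$ with $\mathbf{P}=\mathbf{H}\mathbf{G}\mathbf{H}$. Your explicit reduction to the symmetric part $\mathbf{K}_s$ is a small point of extra rigor that the paper glosses over, but it does not change the substance of the argument.
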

\begin{proof}
The convexity of the function $\eta= \eta(\nabla \phi)$ implies that there exists a real nonsingular matrix $\mathbf{M}$ such that
\begin{align}\label{B is PD}
   \mathbf{H}_{\nabla \phi} \eta = \mathbf{M}^T\mathbf{M}.
\end{align}
Substitution into the expression for $\mathscr{D}$ gives
\begin{align}
 \mathscr{D} = \left(\mathbf{H}_{\mathbf{x}} \phi \mathbf{H}_{\nabla \phi} \eta \right): \left(\frac{\partial \mathbf{f}}{\partial \nabla \phi} \mathbf{H}_{\mathbf{x}} \phi \right) = \left(\mathbf{M} \mathbf{H}_{\mathbf{x}} \phi  \frac{\partial \mathbf{f}}{\partial \nabla \phi}\right) : \left(\mathbf{M} \mathbf{H}_{\mathbf{x}} \phi  \right)  \leq 0,
\end{align}
where the inequality follows via the fact that the diffusivity matrix is negative semi-definite.
\end{proof}

\section{Selection of the variation entropy}\label{sec:Characterization of variation entropies}
Here we provide some examples of variation entropies and discuss objectivity of variation entropies. We present regularization of the $2$-norm in detail. For the sake of simplicity we assume smooth solutions in this section.

\subsection{Some examples}
Some examples of variation entropies are:
\begin{itemize}
 \item The linear function $\eta(\nabla \phi) = \mathbf{a} \cdot \nabla \phi$, with $\mathbf{a} \in \mathbb{R}^d$.
\item The function $\eta(\nabla \phi) = \vertiii{\nabla \phi}_{\mathbf{A}}$ defined as $\vertiii{\nabla \phi}_{\mathbf{A}}^2 := \nabla \phi^T \mathbf{A} \nabla \phi $, with $\mathbf{A} \in \mathbb{R}^{d\times d}$ a symmetric positive-semidefinite matrix.
\item The standard $p$-norm, i.e. $\eta(\nabla \phi)=\|\nabla \phi \|_p$ with $p \geq 1$.
\end{itemize}
By Corollary \ref{corollary: lin comb}, any superposition of the previous variation entropies is also a variation entropy.

\begin{prop}\label{eq:form F}
The alternative form of the variation entropy is
\begin{align}
  \hat{\eta}(r,\boldsymbol{\theta}) = r F(\boldsymbol{\theta})
\end{align}
where $F=F(\boldsymbol{\theta})$ is given by
\begin{align}
  \text{ for }d=2: \quad F(\theta) = \left\{
  \begin{array}{c l} a_1 \cos \theta + a_2 \sin \theta           						   & \text{ for }\eta(\nabla \phi) = \mathbf{a}\cdot \nabla \phi \\[8pt]
  \left( a_{11} \cos^2 \theta  + 2a_{21}\cos\theta\sin\theta +  a_{22}\sin^2 \theta\right)^{1/2} & \text{ for }\eta(\nabla \phi) = \vertiii{\nabla \phi}_{\mathbf{A}}\\[8pt]
         ( |\cos \theta|^p + |\sin \theta|^p)^{1/p} 						   & \text{ for }\eta(\nabla \phi) = \|\nabla \phi\|_p
  \end{array}
  \right.
\end{align}
\begin{align}
  \text{ for }d=3: \quad F(\theta,\varphi) = \left\{
  \begin{array}{c l} a_1 \cos \theta \sin \varphi+ a_2 \sin \theta \sin \varphi + a_3 \cos \varphi       	   & \text{ for }\eta(\nabla \phi) = \mathbf{a}\cdot \nabla \phi \\[8pt]
  \left( a_{11} \cos^2 \theta \sin^2 \varphi   + a_{22}\sin^2\theta\sin^2\varphi +a_{33} \cos^2 \varphi \right. & \nn\\
  \left.+ 2a_{21}\cos\theta\sin\theta\sin^2\varphi +2a_{31} \cos \theta \sin \varphi \cos \varphi\right. & \nn\\
  \left.+ 2a_{32}\sin\theta\sin\varphi\cos\varphi\right)^{1/2} & \text{ for }\eta(\nabla \phi) = \vertiii{\nabla \phi}_{\mathbf{A}}\\[8pt]
        ( |\cos \theta\sin \varphi|^p + |\sin \theta\sin \varphi|^p+ |\cos \varphi|^p)^{1/p}								   & \text{ for }\eta(\nabla \phi) = \|\nabla \phi\|_p.
  \end{array}
  \right.
\end{align}
\end{prop}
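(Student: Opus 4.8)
The plan is to exploit that each of the three candidate functions is \emph{already known} to be a variation entropy, so that Lemma~\ref{thm alterative form VE} guarantees the separated representation $\hat{\eta}(r,\boldsymbol{\theta}) = rF(\boldsymbol{\theta})$; the proposition then reduces to computing the explicit angular factor $F$ by direct substitution of the spherical transformation. In other words, the existence of the form is not in question, only the identification of $F$ for these specific choices.

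First I would record why each entropy has the claimed separated form at all. The linear function $\eta(\nabla\phi)=\mathbf{a}\cdot\nabla\phi$ is a variation entropy by Theorem~\ref{thm:linear function}. The weighted seminorm $\vertiii{\nabla\phi}_{\mathbf{A}}$ with $\mathbf{A}$ symmetric positive-semidefinite, and the standard $p$-norm $\|\nabla\phi\|_p$ with $p\geq 1$, are both seminorms, hence variation entropies by Theorem~\ref{thm:semi-norm}. Consequently Lemma~\ref{thm alterative form VE} applies to all three and supplies the factorization $\hat\eta=rF(\boldsymbol\theta)$, leaving only the computation of $F$ in each case.

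Next I would substitute the spherical coordinate map (\ref{eq: transformation spherical coordinates}) into each expression and factor out the nonnegative radius. In the planar case the map collapses to $v_1=r\cos\theta$, $v_2=r\sin\theta$, the empty product over the $\varphi_l$ being $1$. For the linear entropy this gives $\mathbf{a}\cdot\nabla\phi=r(a_1\cos\theta+a_2\sin\theta)$ at once. For the weighted seminorm I would expand $\nabla\phi^T\mathbf{A}\nabla\phi$ using the symmetry $a_{12}=a_{21}$, producing $r^2(a_{11}\cos^2\theta+2a_{21}\cos\theta\sin\theta+a_{22}\sin^2\theta)$ under the square root. For the $p$-norm, degree-one homogeneity lets me pull $r$ out of $(|v_1|^p+|v_2|^p)^{1/p}$, leaving $(|\cos\theta|^p+|\sin\theta|^p)^{1/p}$. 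The three-dimensional case proceeds identically, now with $v_1=r\cos\theta\sin\varphi$, $v_2=r\sin\theta\sin\varphi$, $v_3=r\cos\varphi$ (the third line of (\ref{eq: transformation spherical coordinates}) dropping out for $d=3$), and one reads off the tabulated $F(\theta,\varphi)$ row by row.

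This argument is essentially a bookkeeping verification rather than a statement with a genuine obstacle, and I would expect no hard step. The only places demanding care are the empty-product convention that collapses the transformation in the planar case, the correct use of the symmetry of $\mathbf{A}$ in the cross terms (so that the coefficient of $\cos\theta\sin\theta$ is $2a_{21}$ rather than $a_{21}$, and likewise for the mixed terms in $d=3$), and retaining the absolute values in the $p$-norm rows while still factoring the nonnegative $r$ cleanly, since $\cos\theta$ and $\sin\theta$ change sign but $r$ does not.
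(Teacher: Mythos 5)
Your proposal is correct and follows essentially the same route as the paper: the paper's proof is likewise a direct substitution of the polar/spherical coordinate map, expanding the quadratic form with the symmetry of $\mathbf{A}$ and factoring out the radius, though it only writes out the $\vertiii{\nabla\phi}_{\mathbf{A}}$ case for $d=2$ and declares the rest analogous (and additionally records that convexity forces $F+F''=\det\mathbf{A}/F^3\geq 0$, i.e.\ positive semi-definiteness of $\mathbf{A}$). Your preliminary appeal to Theorems \ref{thm:semi-norm} and \ref{thm:linear function} plus Lemma \ref{thm alterative form VE} to guarantee the separated form in advance is a harmless reorganization of the same argument.
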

\begin{proof}
 We present the proof only for $\eta(\nabla \phi) = \vertiii{\nabla \phi}_{\mathbf{A}}$ with $d=2$, the other forms follow similarly.
 A direct calculation provides
   \begin{align}
     \nabla \phi^T \mathbf{A}\nabla \phi &= a_{11} \partial_x \phi^2  +
        2a_{21}\partial_x \phi \partial_y \phi + a_{22}\partial_y \phi^2.      
   \end{align}
   Next, we can trivially write
      \begin{align}\label{eq: local gradient cartesian coordinates}
    \vertiii{\nabla \phi }_{\mathbf{A}} &= \|\nabla \phi \|_2\left( a_{11} \frac{\partial_x \phi^2}{\|\nabla \phi \|_2^2}  +
      2a_{21}\frac{\partial_x \phi \partial_y \phi}{\|\nabla \phi \|_2^2} + a_{22}\frac{\partial_y \phi^2}{\|\nabla \phi \|_2^2}\right)^{1/2}.
   \end{align}
   Switching to polar coordinates with radial distance $r$ and angle $\theta$, the right-hand side of (\ref{eq: local gradient cartesian coordinates}) takes the form
   \begin{align}
     r F(\theta) = r \left( a_{11} \cos^2 \theta  +
         2a_{21}\cos\theta\sin\theta +  a_{22}\sin^2 \theta\right)^{1/2}.
   \end{align}
   Note that fulfilling the convexity condition requires the positive semi-definiteness of the matrix $\mathbf{A}$:
   \begin{align}\label{eq:convex condition SPD}
     F(\theta) + F''(\theta) &=   \dfrac{\det \mathbf{A}}{F(\theta)^3} \geq 0.
   \end{align}
   Here $\det \mathbf{A}$ denotes the determinant of matrix $\mathbf{A}$.
\end{proof}

The linear variation entropy fulfills the convexity condition in spherical coordinates with equality, for $d=2$ this reads
\begin{align}
  F(\theta) + F''(\theta) = 0.
\end{align}
Thus there is no space for the variation entropy to decrease.
It is known that the entropy does not remain constant at shocks.
This makes the linear variation entropy a non-suitable mechanism to deal with shock waves in a numerical simulation.

For the $1$-norm variation entropy, i.e. $\eta=\|\nabla \phi \|_1$ we have
\begin{align}
  F(\theta) + F''(\theta) = 0, \quad\quad \theta \neq m \pi/2\quad m \in \mathbb{Z}.
\end{align}
We remark that the $1$-norm is not differentiable along the axis.
Thus also for the $1$-norm there is also no space for the variation entropy to decrease.
For $p>1$ the convexity condition is not an equality which makes it suitable for shocks.

Next we consider the quadratic form.
Proposition \ref{eq:form F} reveals that the matrix $\mathbf{A} \in \mathbb{R}^{d\times d}$ in the quadratic form may depend on $\boldsymbol{\theta}$ (the spherical coordinate angles).
We explicitly state the evolution equation of $\eta = \vertiii{\nabla \phi}_{\mathbf{A}}$ for an augmented conservation law.
Note that the derivatives take the form
\begin{subequations}\label{eq: quadratic form derivatives}
  \begin{alignat}{2}
   \frac{\partial \eta}{\partial \nabla \phi} &= \frac{1}{\eta}\mathbf{A}  \nabla \phi,\\
   (\mathbf{H}_{\nabla \phi} \eta) &= \frac{1}{\eta} \mathbf{A}  - \frac{1}{\eta^3}\mathbf{A}  \nabla \phi (\mathbf{A} \nabla \phi)^T.
  \end{alignat}
\end{subequations}
Substitution of (\ref{eq: quadratic form derivatives}) into (\ref{variation entropy evolution aug})-(\ref{variation entropy evolution terms}) reveals that the variation entropy evolves as:
\begin{align}
\partial_t \eta
+\nabla \cdot  \mathbf{q} 
         =& \mathscr{D}+\mathscr{S},
\end{align}
where the flux $\mathbf{q}$ and the non-conservative terms $\mathscr{D}$ and $\mathscr{S}$ are respectively given by:
\begin{subequations}
  \begin{alignat}{2}
   \mathbf{q} =& \eta \frac{\partial \mathbf{f}}{\partial \phi}   +  \left(\frac{\partial \mathbf{f}}{\partial \nabla \phi}  \mathbf{H}_{\mathbf{x}} \phi\right) \frac{\nabla \phi^T \mathbf{A}}{\eta}, \\
   \mathscr{D}  =& \left(\mathbf{H}_{\mathbf{x}} \phi \left(\frac{\mathbf{A}}{\eta} -\frac{1}{\eta^3}\mathbf{A}  \nabla \phi : \mathbf{A} \nabla \phi \right) \right): \left(\frac{\partial \mathbf{f}}{\partial \nabla \phi} \mathbf{H}_{\mathbf{x}} \phi \right),\\
   \mathscr{S}  =& \frac{\partial s}{\partial \phi}\eta. 
  \end{alignat}
\end{subequations}


\subsection{Objectivity}
A reasonable demand on the continuous level is to ask for objectivity (frame-invariance) of the variation entropy.
Here we solely focus on rotation invariance, as invariance by translation is immediate.
Thus we wish to identify those variation entropies which are not affected by a rotation of the coordinate system.
We denote with $\mathbf{x}$ the original spatial coordinates and $\tilde{\mathbf{x}} = \mathbf{R} \mathbf{x}$ the rotated coordinate system with rotation matrix $\mathbf{R}$ (i.e. $\mathbf{R}^T = \mathbf{R}^{-1}$).

Let us first consider variation entropies which depend on spatial coordinates solely via $\nabla \phi$. This means that possible coefficients, like in $\mathbf{a}$ and $\mathbf{A}$ appearing in $\eta=\mathbf{a}\cdot \nabla \phi$ and $\eta=\vertiii{\nabla \phi}_\mathbf{A}$ respectively, are constant with respect to the spatial coordinates. In this case rotation invariance may be written as:
\begin{align}
 \eta (\nabla_{\tilde{\mathbf{x}}} \phi) = \eta (\nabla_{\mathbf{x}} \phi),
\end{align}
or equivalently:
\begin{align}
 \eta (\mathbf{R}\nabla_{\tilde{\mathbf{x}}} \phi) = \eta (\nabla_{\tilde{\mathbf{x}}} \phi),\quad \quad \eta (\mathbf{R}^T\nabla_{\mathbf{x}} \phi) = \eta (\nabla_{\mathbf{x}} \phi).
\end{align}
The subscript refers to the corresponding coordinate system.
\begin{theorem}(Objectivity)
 The only objective variation entropy with constant coefficients is the total variation measured in the $2$-norm, $\eta=\|\nabla \phi\|_2$ (up to multiplication by a constant).
\end{theorem}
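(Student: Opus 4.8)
The plan is to exploit the alternative representation of a variation entropy established in Lemma~\ref{thm alterative form VE}, namely $\eta(\nabla \phi) = r\,F(\boldsymbol{\theta})$, where $r = \|\nabla \phi\|_2$ is the radial coordinate and $\boldsymbol{\theta}$ collects the angular coordinates of $\nabla \phi$. Since a rotation matrix $\mathbf{R}$ preserves lengths, $\|\mathbf{R}\nabla \phi\|_2 = \|\nabla \phi\|_2$, so $\mathbf{R}$ fixes the radial coordinate $r$ and acts only on the angular part $\boldsymbol{\theta}$. Consequently the rotation-invariance requirement $\eta(\mathbf{R}\nabla \phi) = \eta(\nabla \phi)$ collapses to a statement purely about the angular profile $F$, and the whole theorem reduces to showing that $F$ must be constant.

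First I would restrict attention to the unit sphere $r=1$, where $\eta$ reduces to $F(\boldsymbol{\theta})$. The key observation is that the rotation group acts transitively on the unit sphere $S^{d-1}$ for $d \geq 2$: any two unit vectors can be mapped to one another by a suitable $\mathbf{R}$. Invariance under all such rotations therefore forces $F$ to take the same value at every point of the sphere, i.e.\ $F(\boldsymbol{\theta}) \equiv c$ for some constant $c$. In the two-dimensional case this is especially transparent, since a planar rotation shifts the single angle $\theta \mapsto \theta + \alpha$, and invariance for all $\alpha$ gives $F(\theta + \alpha) = F(\theta)$, hence $F$ is constant.

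Substituting back yields $\eta(\nabla \phi) = c\,r = c\,\|\nabla \phi\|_2$. To finish I would invoke the defining properties of a variation entropy (Definition~\ref{def VE}): the homogeneity property~(\ref{eq: homogeneity property}) is automatically satisfied by any scalar multiple of a norm, and convexity of $c\|\nabla \phi\|_2$ holds precisely when $c \geq 0$, since for $c<0$ the function is concave and violates the design condition. This pins down $\eta = c\,\|\nabla \phi\|_2$ with $c \geq 0$, i.e.\ the total variation in the $2$-norm up to a nonnegative multiplicative constant, which is exactly the claim.

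The main obstacle is making the angular rigidity airtight rather than merely asserting it: one must confirm that the rotation action genuinely identifies every direction on the sphere, so that invariance leaves no room for nonconstant angular dependence in $F$. For $d=2$ this reduces to the elementary shift argument above; for $d \geq 3$ it rests on transitivity of the rotation group on $S^{d-1}$, which directly forces $F(\boldsymbol{\theta}_1) = F(\boldsymbol{\theta}_2)$ for any two angular positions (no continuity of $\eta$ is even required, only the group action). Once this is in hand, the radial factorization and the convexity sign condition on $c$ are routine.
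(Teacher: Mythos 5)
Your proposal is correct and follows essentially the same route as the paper: both reduce the claim to the representation $\eta = r\,F(\boldsymbol{\theta})$ from Lemma~\ref{thm alterative form VE} and use rotation invariance on the unit sphere to force $F$ to be constant, yielding $\eta = c\,\|\nabla\phi\|_2$. Your transitivity argument for $d\geq 3$ is in fact a slightly cleaner justification than the paper's angle-shift notation $F(\boldsymbol{\theta}+\boldsymbol{\varrho})$, and the closing observation that convexity pins down $c\geq 0$ is a small addition the paper leaves implicit.
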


\begin{proof}
 We use the alternative form of Lemma \ref{thm alterative form VE}. Let the angles of the rotation matrix $\mathbf{R}$ be $\boldsymbol{\varrho}$ and let polar angles of $\nabla \phi$ denote $\boldsymbol{\theta}$. A direct computation results in
 \begin{align}
  \eta (\mathbf{R}\nabla \phi) = r F(\boldsymbol{\theta}+\boldsymbol{\varrho}) =\hat{\eta} (r,\boldsymbol{\theta}+\boldsymbol{\varrho}).
 \end{align}
 Demanding $\eta(\mathbf{R}\nabla \phi) = \eta(\nabla \phi)$ provides that $F(\boldsymbol{\theta}+\boldsymbol{\varrho})=F(\boldsymbol{\theta})$ for all angles $\boldsymbol{\varrho}$, i.e. $F$ is a constant.
\end{proof}

Thus the only objective $p$-norm with constant coefficients is the norm with $p=2$. In particular, we wish to emphasize that \textit{the $1$-norm is not objective}. This makes it unsuitable for usage in non-Cartesian grid computations.

Next we proceed with the case in which the coefficients may depend on spatial coordinates. We first consider the linear variation entropy. Objectivity results when
\begin{align}
  \mathbf{a}(\mathbf{x})\cdot \nabla_{\mathbf{x}} \phi = \mathbf{a}(\tilde{\mathbf{x}})\cdot \nabla_{\tilde{\mathbf{x}}}\phi.
\end{align}
By applying the chain rule we get
\begin{align}
  \mathbf{a}(\mathbf{x})\cdot \nabla_{\mathbf{x}} \phi = \mathbf{a}(\mathbf{Rx})\cdot \mathbf{R}^T \nabla_{\mathbf{x}}\phi.
\end{align}
Thus objectivity follows when the coefficient vector satisfies:
\begin{align}\label{condition a}
  \mathbf{a}(\mathbf{R}\mathbf{x}) = \mathbf{R} \mathbf{a}(\mathbf{x}).
\end{align}
Note that when $\mathbf{a}$ represents a convection velocity, a rotation of coordinate system naturally implies the same rotation of the convective velocity.

Consider now the variation entropy $\eta = \vertiii{\nabla \phi}_{\mathbf{A}}$ with $\mathbf{A}$ a symmetric positive semi-definite matrix. This variation entropy is objective when
\begin{align}
  \vertiii{\nabla_{\mathbf{x}}\phi}_{\mathbf{A}(\mathbf{x})} = \vertiii{\nabla_{\tilde{\mathbf{x}}}\phi}_{\mathbf{A}(\tilde{\mathbf{x}})}.
\end{align}
A similar argument leads to the constraint:
\begin{align}\label{condition A}
 \mathbf{A}(\mathbf{Rx})= \mathbf{R}\mathbf{A}(\mathbf{x})\mathbf{R}^{T}.
\end{align}

Remark that $\mathbf{A}=\mathbf{I}$ is included in this case.

An overview of the variation entropy results is presented in Figure \ref{fig:tvd entropy}.

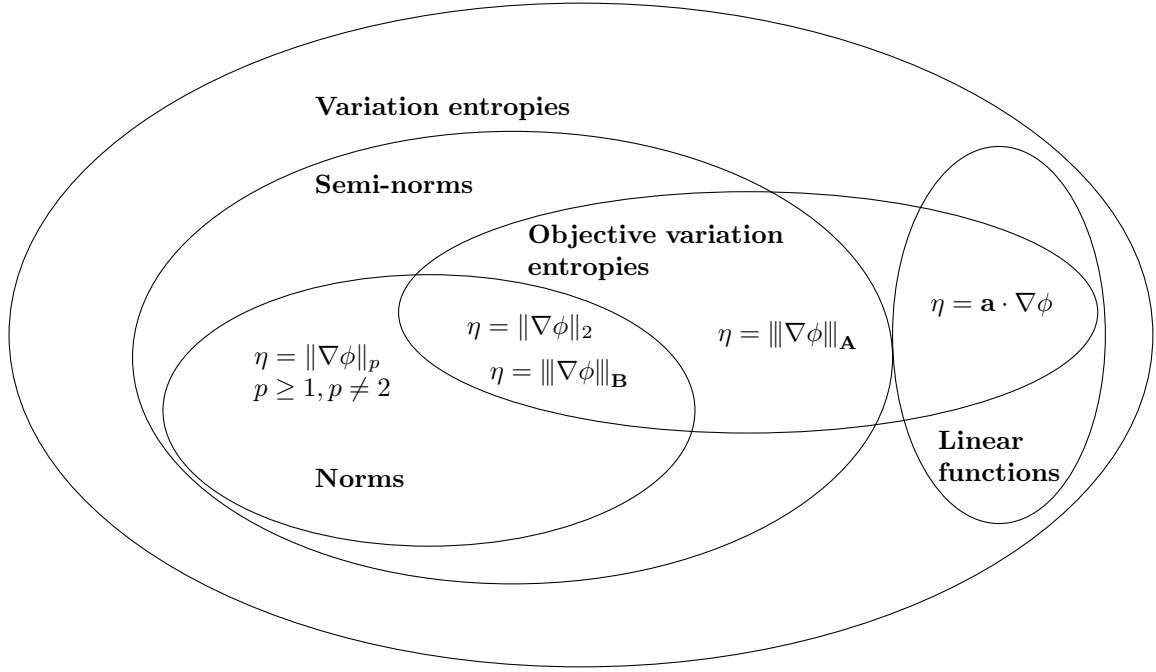
\begin{figure}[h]
  \begin{center}
\begin{tikzpicture}
\draw (2.7,0.3) ellipse (4.6cm and 1.6cm);
\draw (0.5,0) ellipse (7.525cm and 4.4cm);
\draw (-0.4,-0.3) ellipse (5.0cm and 3.0cm);
\draw (-1.5,-1.0) ellipse (3.5cm and 1.8cm);
\draw (6.0,0.0) ellipse (1.4cm and 2.5cm);
\node[text width=4cm] at (1.8,1.1) {\textbf{Objective variation entropies}};
\node[text width=3cm] at (0.5,0.1) {$\eta= \|\nabla \phi \|_2$};
\node[text width=7cm] at (0.5,3.0) {\textbf{Variation entropies}};
\node[text width=7cm] at (0.5,2.0) {\textbf{Semi-norms}};
\node[text width=2cm] at (6.2,-1.6) {\textbf{Linear functions}};
\node[text width=3cm] at (6.6,0.4) {$\eta=\mathbf{a}\cdot \nabla \phi$};
\node[text width=2.6cm] at (-2.5,-0.5) {$\eta= \|\nabla \phi \|_p\\ p\geq 1, p\neq 2$};
\node[text width=3cm] at (-1.5,-1.9)  {\textbf{Norms}};
\node[text width=3cm] at (0.8,-0.5) {$\eta=   \vertiii{\nabla \phi}_{\mathbf{B}}$};
\node[text width=3cm] at (3.8,0.0) {$\eta=   \vertiii{\nabla \phi}_{\mathbf{A}}$};
\end{tikzpicture}
  \end{center}
  \caption{Overview of objective variation entropy results. The variation entropy $ \|\nabla \phi \|_p$ is not objective unless $p=2$. The matrices $\mathbf{A}$ and $\mathbf{B}$ are positive semi-definite and positive definite respectively. The vector and matrices in $\eta= \mathbf{a}\cdot \nabla \phi$, $\eta=\vertiii{\nabla \phi}_{\mathbf{A}}$ and $\eta=\vertiii{\nabla \phi}_{\mathbf{B}}$ are chosen according to (\ref{condition a}) and (\ref{condition A}), and as such we classify these variation entropies as objective.} \label{fig:tvd entropy}
\end{figure}

\subsection{Regularization of $2$-norm variation entropy}\label{subsec:Variation measured in the $2$-norm}
Due to its importance, we discuss the $2$-norm variation entropy here separately.
In particular we consider a regularized version in order to allow evaluation everywhere.
Thus we study the case where the variation entropy function is the regularized absolute value operator $\|\cdot\|_{\eps,2}: \mathbb{R}^d \rightarrow \mathbb{R}_+$ which is defined for $\mathbf{b} \in \mathbb{R}^d, \eps>0$ as:
\begin{align}\label{eq: regularized 2 norm}
  \|\mathbf{b}\|^2_{\eps,2} := \mathbf{b}\cdot \mathbf{b} +  \eps^2.
\end{align}
Notice that
\begin{align}\label{eq: regularized 2 norm 4}
 \|\mathbf{b}\|_2 \leq \|\mathbf{b}\|_{\eps,2} \leq \|\mathbf{b}\|_{2}+\eps,
\end{align}
as displayed in Figure \ref{fig: regularization norm}.
\begin{figure}[h!]
    \begin{center}
    \begin{subfigure}[b]{0.49\textwidth}
  \includegraphics[scale=0.5]{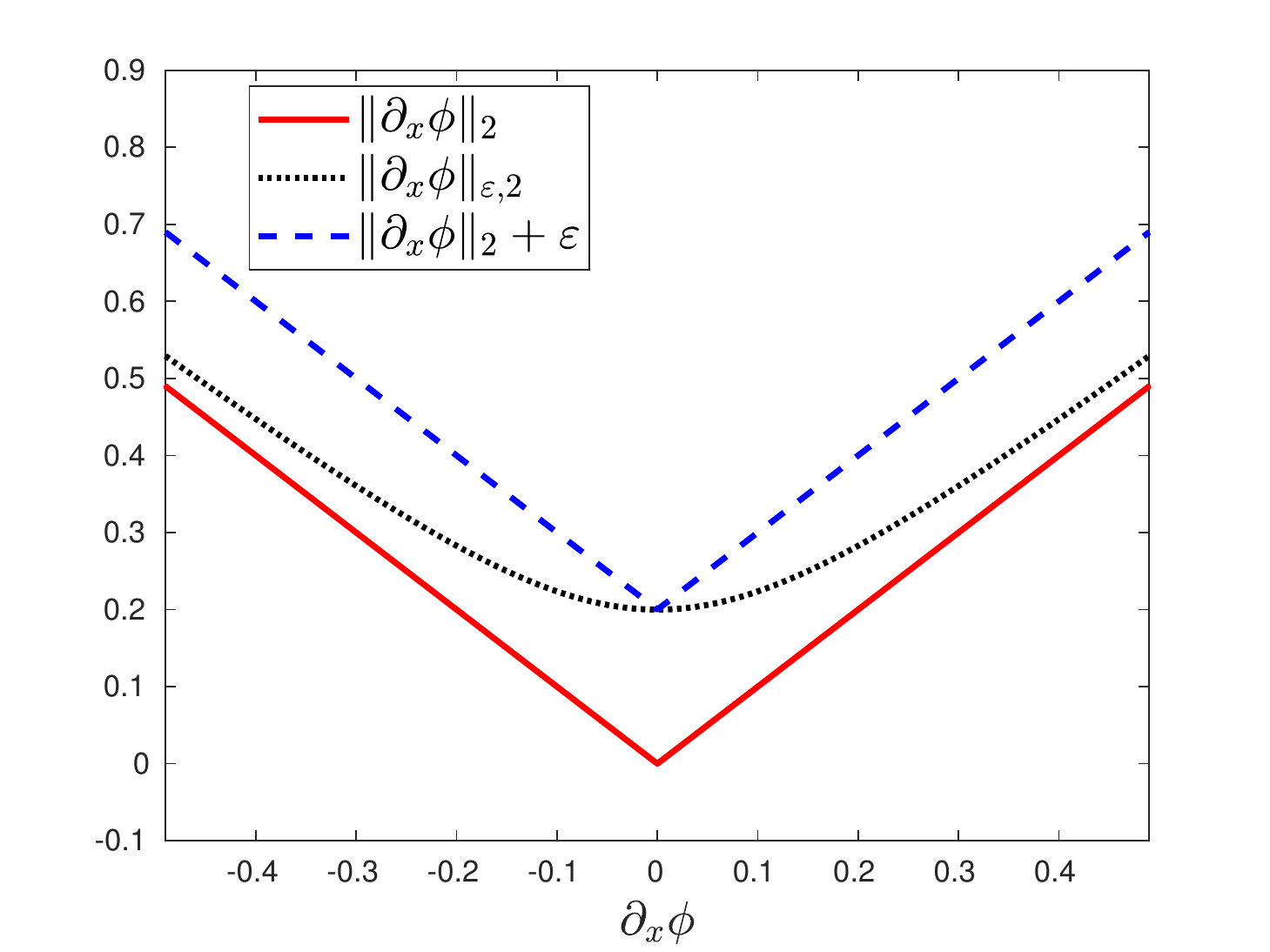}
        \caption{$1$ dimension} \label{fig:reg norm 1D}
    \end{subfigure}
    \begin{subfigure}[b]{0.49\textwidth}
     \includegraphics[scale=0.5]{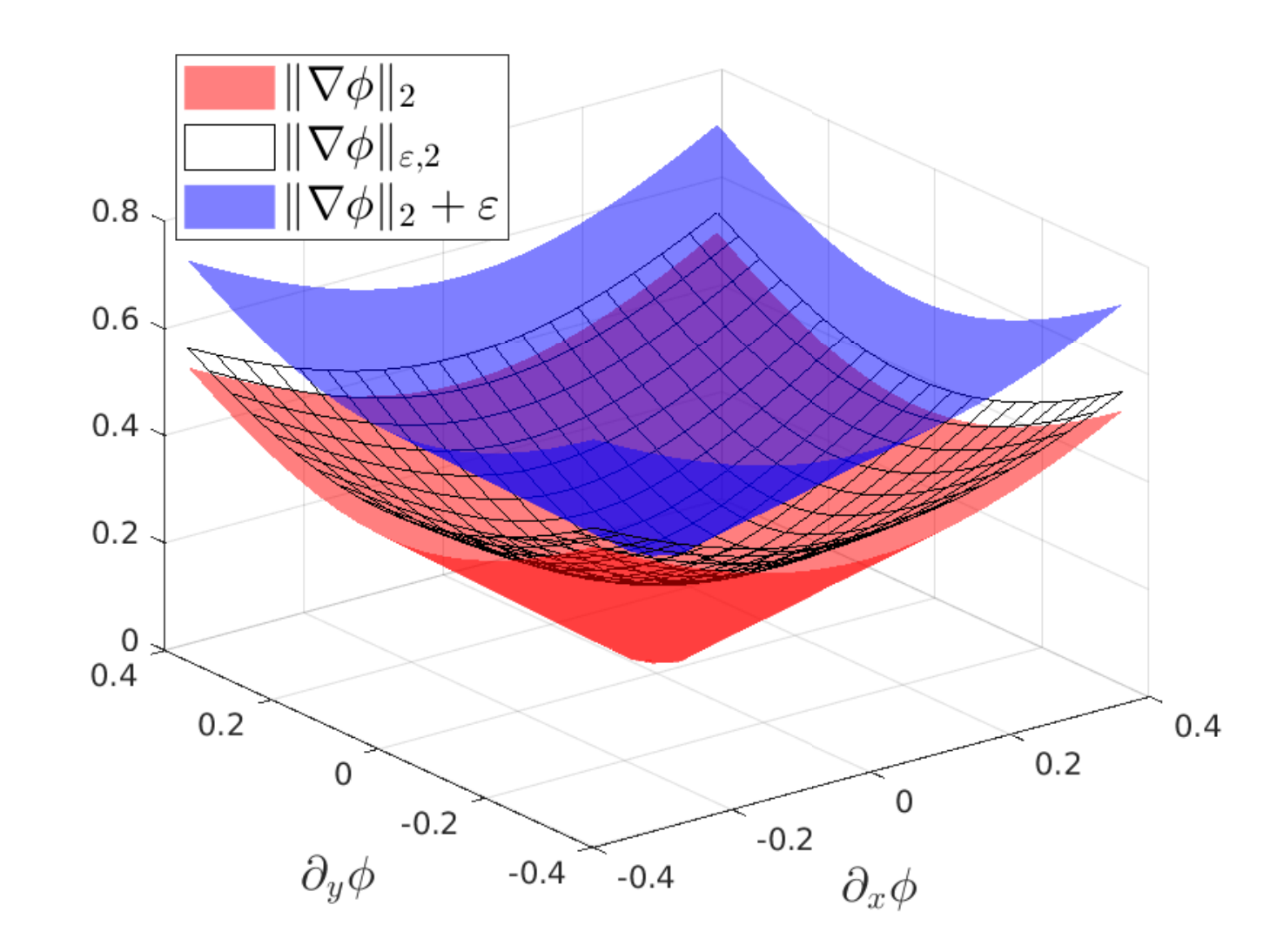}
        \caption{$2$ dimensions} \label{fig:reg norm}
    \end{subfigure}
    \caption{Plot of the regularized norm in (a) $1$-dimension and (b) $2$-dimensions. Here $\eps=0.2$.}
    \label{fig: regularization norm}
    \end{center}
\end{figure}\\
The regularized absolute value has the derivatives:
\begin{subequations}\label{eq: regularized 2 norm derivatives}
  \begin{alignat}{2}
 \partial_{\mathbf{b}} \|\mathbf{b}\|_{\eps,2} =& \frac{\mathbf{b}}{\|\mathbf{b}\|_{\eps,2}}, \label{1st derivative regularized absolute value}\\
 \partial^2_{\mathbf{b}} \|\mathbf{b}\|_{\eps,2} =& \left(\mathbf{I}-\frac{\mathbf{b}\mathbf{b}^T}{\|\mathbf{b}\|^2_{\eps,2}}\right)\frac{1}{\|\mathbf{b}\|_{\eps,2}},\label{2nd derivative regularized absolute value}
  \end{alignat}
\end{subequations}
which exist everywhere.
The homogeneity constraint (\ref{eq: homogeneity property}) is violated by a term that scales with $\eps^2$:
\begin{align}\label{eq: regularized 2 norm homogeneity}
\mathbf{b} \cdot \partial_{\mathbf{b}} \|\mathbf{b}\|_{\eps,2}  -\|\mathbf{b}\|_{\eps,2} = \frac{\eps^2}{\|\mathbf{b}\|_{\eps,2}}.
\end{align}
Also the term that appears in $\mathscr{A}$ scales with $\eps^2$:
\begin{align}\label{eq: regularized 2 norm derivatives hessian}
 \partial^2_{\mathbf{b}} \|\mathbf{b}\|_{\eps,2} \mathbf{b}=\eps^2\frac{\mathbf{b}}{\|\mathbf{b}\|^3_{\eps,2}}.
\end{align}
\begin{corollary}(Evolution equation of a regularized variation entropy)
The regularized variation $\eta=\eta_\eps = \|\nabla \phi\|_{\eps,2}$ satisfies the evolution equation:
\begin{align}\label{variation entropy evolution444}
\partial_t \eta_\eps +\nabla \cdot  \mathbf{q}_\eps =& \mathscr{D}_\eps+\mathscr{S}_\eps + \mathscr{R}_\eps ,
\end{align}
where the flux $\mathbf{q}_\eps$, the non-conservative terms $\mathscr{A}_\eps$ and $\mathscr{D}_\eps$ and the source term $\mathscr{S}_\eps$ are respectively defined as:
\begin{subequations}\label{variation entropy evolution444 terms}
  \begin{alignat}{2}
   \mathbf{q}_\eps =&  \frac{\partial \mathbf{f}}{\partial \phi}  \eta_\eps 
   +  \frac{\partial \mathbf{f}}{\partial \nabla \phi}  \nabla \eta_\eps, \\
 \mathscr{D}_\eps  =& \frac{1}{\eta_\eps^3}\left(\left(\|\nabla \phi\|_2^2\mathbf{I}-\nabla \phi \nabla \phi^T\right) \mathbf{H}_{\mathbf{x}} \phi \right): 
 \left(\frac{\partial \mathbf{f}}{\partial \nabla \phi} \mathbf{H}_{\mathbf{x}} \phi  \right),\\
 \mathscr{S}_\eps  =& \frac{\partial s}{\partial \phi} \eta_\eps,\\
 \mathscr{R}_\eps  =& \frac{\eps^2}{\eta_\eps}\left( \nabla \cdot \left(  \frac{\partial\mathbf{f}}{\partial \phi} \right) +\frac{1}{\eta_\eps^2} \mathbf{H}_{\mathbf{x}} \phi: 
 \left(\frac{\partial \mathbf{f}}{\partial \nabla \phi} \mathbf{H}_{\mathbf{x}} \phi  \right)-\frac{\partial s}{\partial \phi}\right).
  \end{alignat}
\end{subequations}
\end{corollary}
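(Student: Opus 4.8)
The plan is to rerun the derivation of the augmented evolution equation (\ref{variation entropy evolution aug}), but now \emph{without} invoking the exact homogeneity property at any point, since $\eta_\eps=\|\nabla\phi\|_{\eps,2}$ satisfies it only up to the order-$\eps^2$ defect recorded in (\ref{eq: regularized 2 norm homogeneity}). I would start from $\partial_t\phi+\nabla\cdot\mathbf{f}=s$, take its gradient, and contract with the variation entropy variables $\partial\eta_\eps/\partial\nabla\phi=\nabla\phi/\eta_\eps$ from (\ref{1st derivative regularized absolute value}). Interchanging the temporal and spatial derivatives places $\partial_t\eta_\eps$ on the left, and the spatial contributions are then reorganised by exactly the product-rule and divergence manipulations used around (\ref{second term}) in Lemma \ref{lem:evolution}, now carrying the extra $\partial\mathbf{f}/\partial\nabla\phi$ terms produced by the diffusive dependence $\mathbf{f}=\mathbf{f}(\phi,\nabla\phi)$. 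A direct evaluation of the full flux $\nabla\mathbf{f}\,(\partial\eta_\eps/\partial\nabla\phi)$ gives $\eta_\eps\,\partial\mathbf{f}/\partial\phi+(\partial\mathbf{f}/\partial\nabla\phi)\nabla\eta_\eps-\tfrac{\eps^2}{\eta_\eps}\partial\mathbf{f}/\partial\phi$, i.e.\ the stated $\mathbf{q}_\eps$ minus an $\eps^2$ surplus that I transfer to the right-hand side.

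The remaining terms separate into three blocks, each reproducing one homogeneous term plus an $\eps^2$ remainder. The source block yields $\frac{\partial s}{\partial\phi}\,(\partial\eta_\eps/\partial\nabla\phi\cdot\nabla\phi)$; applying (\ref{eq: regularized 2 norm homogeneity}) splits it into $\mathscr{S}_\eps=\frac{\partial s}{\partial\phi}\eta_\eps$ and the residual $-\frac{\eps^2}{\eta_\eps}\frac{\partial s}{\partial\phi}$, the third summand of $\mathscr{R}_\eps$. The diffusive ($\mathscr{D}$-type) block requires the Hessian (\ref{2nd derivative regularized absolute value}); writing $\eta_\eps^{2}\mathbf{I}=\|\nabla\phi\|_2^2\mathbf{I}+\eps^2\mathbf{I}$ isolates the combination $\|\nabla\phi\|_2^2\mathbf{I}-\nabla\phi\nabla\phi^{T}$ that defines $\mathscr{D}_\eps$, while the leftover isotropic piece $\eps^2\mathbf{I}$ contracts against $(\partial\mathbf{f}/\partial\nabla\phi)\,\mathbf{H}_{\mathbf{x}}\phi$ to produce precisely the middle summand $\frac{\eps^2}{\eta_\eps^{3}}\mathbf{H}_{\mathbf{x}}\phi:(\partial\mathbf{f}/\partial\nabla\phi\,\mathbf{H}_{\mathbf{x}}\phi)$ of $\mathscr{R}_\eps$; both survive with no further manipulation. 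The convective block supplies the $\mathscr{A}$-type non-conservative term of Lemma \ref{lem:evolution}, which for a genuine variation entropy would vanish by Lemma \ref{thm advection}, but for $\eta_\eps$ equals $\eps^2(\mathbf{H}_{\mathbf{x}}\phi\,\nabla\phi)\cdot(\partial\mathbf{f}/\partial\phi)/\eta_\eps^{3}$ in view of $\mathbf{H}_{\nabla\phi}\eta_\eps\,\nabla\phi=\eps^2\nabla\phi/\eta_\eps^{3}$ from (\ref{eq: regularized 2 norm derivatives hessian}).

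The main obstacle is the bookkeeping of the transferred convective surplus, because $\mathscr{R}_\eps$ carries the scalar $\frac{\eps^2}{\eta_\eps}$ \emph{outside} the divergence, multiplying only $\nabla\cdot(\partial\mathbf{f}/\partial\phi)$, whereas the surplus appears as $\nabla\cdot\!\left(\frac{\eps^2}{\eta_\eps}\frac{\partial\mathbf{f}}{\partial\phi}\right)$. Expanding by the product rule splits this into $\frac{\eps^2}{\eta_\eps}\nabla\cdot(\partial\mathbf{f}/\partial\phi)$ plus $\nabla(\eps^2/\eta_\eps)\cdot\partial\mathbf{f}/\partial\phi$. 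Using $\nabla\eta_\eps=\mathbf{H}_{\mathbf{x}}\phi\,\nabla\phi/\eta_\eps$, hence $\nabla(1/\eta_\eps)=-\mathbf{H}_{\mathbf{x}}\phi\,\nabla\phi/\eta_\eps^{3}$, the second piece equals $-\eps^2(\mathbf{H}_{\mathbf{x}}\phi\,\nabla\phi)\cdot(\partial\mathbf{f}/\partial\phi)/\eta_\eps^{3}$, which cancels exactly the nonvanishing $\mathscr{A}$-type term identified above. Verifying this single cancellation — the one genuinely delicate step — leaves only $\frac{\eps^2}{\eta_\eps}\nabla\cdot(\partial\mathbf{f}/\partial\phi)$, the first summand of $\mathscr{R}_\eps$. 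Collecting the three remainders then yields $\mathscr{R}_\eps$ exactly as stated and establishes (\ref{variation entropy evolution444})–(\ref{variation entropy evolution444 terms}).
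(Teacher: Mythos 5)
Your proposal is correct and follows essentially the same route as the paper: the paper likewise substitutes $\eta_\eps$ into the (non-homogenized) augmented evolution equation, obtains the intermediate flux $\mathbf{q}=\mathbf{q}_\eps-\tfrac{\eps^2}{\eta_\eps}\tfrac{\partial \mathbf{f}}{\partial \phi}$ together with the residual terms $\mathscr{A}$, $\mathscr{D}$, $\mathscr{S}$, and then performs exactly your product-rule expansion of $\nabla\cdot\bigl(\tfrac{\eps^2}{\eta_\eps}\tfrac{\partial \mathbf{f}}{\partial \phi}\bigr)$ to cancel the $\mathscr{A}$-type term and collect $\mathscr{R}_\eps$. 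The only cosmetic difference is that you make explicit from the outset that the homogeneity property cannot be invoked for $\eta_\eps$, which the paper handles implicitly through the same $\eps^2$ defect identities.
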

\begin{proof}
  A direct substitution of $\eta= \eta_\eps$ into (\ref{variation entropy evolution aug})-(\ref{variation entropy evolution terms}) using (\ref{eq: regularized 2 norm})-(\ref{eq: regularized 2 norm derivatives hessian}) yields
\begin{align}\label{variation entropy evolution44465}
\partial_t \eta_\eps +\nabla \cdot  \mathbf{q} =& \mathscr{A} + \mathscr{D}+\mathscr{S},
\end{align}
where the flux $\mathbf{q}$, the non-conservative terms $\mathscr{A}$ and $\mathscr{D}$ and the source term $\mathscr{S}$ are respectively given by:
\begin{subequations}\label{variation entropy evolution444654}
  \begin{alignat}{2}
   \mathbf{q} =&\frac{\partial \mathbf{f}}{\partial \phi}  \left  (\eta_\eps - \frac{\eps^2}{\eta_\eps} \right)
   +  \frac{\partial \mathbf{f}}{\partial \nabla \phi}  \nabla \eta, \\
 \mathscr{A}  =& \frac{\eps^2}{\eta^2_\eps}\frac{\nabla \phi}{\eta_\eps} \cdot
   \left( \mathbf{H}_{\mathbf{x}} \phi \frac{\partial \mathbf{f}}{\partial \phi} \right), \\
 \mathscr{D}  =& \frac{1}{\eta_\eps}\left(\left(\mathbf{I}-\frac{\nabla \phi \nabla \phi^T}{\eta_\eps^2}\right) \mathbf{H}_{\mathbf{x}} \phi \right): 
 \left(\frac{\partial \mathbf{f}}{\partial \nabla \phi} \mathbf{H}_{\mathbf{x}} \phi  \right),\\
 \mathscr{S}  =& \frac{\partial s}{\partial \phi} \left  (\eta_\eps - \frac{\eps^2}{\eta_\eps} \right).
  \end{alignat}
\end{subequations}
The divergence of the flux writes as
\begin{align}\label{eq: regularization flux term}
 \nabla \cdot \mathbf{q}  =& \nabla \cdot \left(\frac{\partial \mathbf{f}}{\partial \phi}  \left  (\eta_\eps - \frac{\eps^2}{\eta_\eps} \right)
   +  \frac{\partial \mathbf{f}}{\partial \nabla \phi}  \nabla \eta \right),\nn\\
	       =& \nabla \cdot \left(\frac{\partial \mathbf{f}}{\partial \phi}    \eta_\eps +  
   \frac{\partial \mathbf{f}}{\partial \nabla \phi}  \nabla \eta \right)+ \frac{\eps^2}{\eta_\eps^3}\nabla \phi \cdot \left(\mathbf{H}_{\mathbf{x}} \phi \frac{\partial \mathbf{f}}{\partial \phi}\right) - \frac{\eps^2}{\eta_\eps} \nabla \cdot \left(\frac{\partial \mathbf{f}}{\partial \phi}   \right)\nn\\
   =& \nabla \cdot \mathbf{q}_\eps+ \mathscr{A}- \frac{\eps^2}{\eta_\eps} \nabla \cdot \left(\frac{\partial \mathbf{f}}{\partial \phi}   \right).
\end{align}
The diffusion term $\mathscr{D}$ can be written as
\begin{align}\label{eq: regularization diffusion term}
 \mathscr{D}  =& \left(\frac{1}{\eta_\eps}\left(\mathbf{I}-\frac{\nabla \phi \nabla \phi^T}{\eta_\eps^2}\right) \mathbf{H}_{\mathbf{x}} \phi \right): 
 \left(\frac{\partial \mathbf{f}}{\partial \nabla \phi} \mathbf{H}_{\mathbf{x}} \phi  \right),\nn\\
	      =& \frac{1}{\eta_\eps^3}\left(\left(\|\nabla \phi\|_2^2\mathbf{I}-\nabla \phi \nabla \phi^T\right) \mathbf{H}_{\mathbf{x}} \phi \right): 
 \left(\frac{\partial \mathbf{f}}{\partial \nabla \phi} \mathbf{H}_{\mathbf{x}} \phi  \right) +\frac{\eps^2}{\eta_\eps^3} \mathbf{H}_{\mathbf{x}} \phi: 
 \left(\frac{\partial \mathbf{f}}{\partial \nabla \phi} \mathbf{H}_{\mathbf{x}} \phi  \right)\nn\\
 =& \mathscr{D}_\eps +\frac{\eps^2}{\eta_\eps^3} \mathbf{H}_{\mathbf{x}} \phi: 
 \left(\frac{\partial \mathbf{f}}{\partial \nabla \phi} \mathbf{H}_{\mathbf{x}} \phi  \right).
\end{align}
Substitution of (\ref{eq: regularization flux term})-(\ref{eq: regularization diffusion term}) into (\ref{variation entropy evolution44465})-(\ref{variation entropy evolution444654}) proves the claim.
\end{proof}
Consider the one dimensional case of (\ref{variation entropy evolution444})-(\ref{variation entropy evolution444 terms}), i.e.
\begin{align}\label{eq: 1D case of regularized}
\frac{\partial }{\partial t} \|\partial_x \phi\|_{\eps,2}  
+\partial_x \left ( \frac{\partial f}{\partial \phi}\|\partial_x \phi\|_{\eps,2} +  \frac{\partial f}{\partial (\partial_x \phi)} \partial_x\left( \|\partial_x \phi\|_{\eps,2}  \right)\right) 
         =&   \frac{\partial s}{\partial \phi} \|\partial_x \phi\|_{\eps,2} +\mathscr{R}_\eps .
\end{align}
We focus on the last term on the right-hand side.
It can be written as
\begin{align}
   \mathscr{R}_\eps = \left(\partial_x\left(\frac{\partial f}{\partial \phi}\right) -\frac{\partial s}{\partial \phi}\right) g_1^\eps(\partial_x \phi) + \left((\partial_{xx} \phi)^2\frac{\partial f}{\partial \partial_x \phi}\right) g_2^\eps(\partial_x \phi),
  \end{align}
  where the functions $g_1^\eps$ and $g_2^\eps$ are defined as
\begin{subequations}
  \begin{alignat}{2}
   g_1^\eps(\partial_x \phi) =&~ \frac{\eps^2}{\|\partial_x \phi\|_{\eps,2}}, \\
   g_2^\eps(\partial_x \phi) =&~ \frac{\eps^2}{\|\partial_x \phi\|_{\eps,2}^3} .
  \end{alignat}
\end{subequations}
In Figure \ref{fig: regularization} we plot $g_1^\eps$ and $g_2^\eps$ for several values of $\eps$.
\begin{figure}[h!]
    \begin{center}
    \begin{subfigure}[b]{0.49\textwidth}
  \includegraphics[scale=0.15]{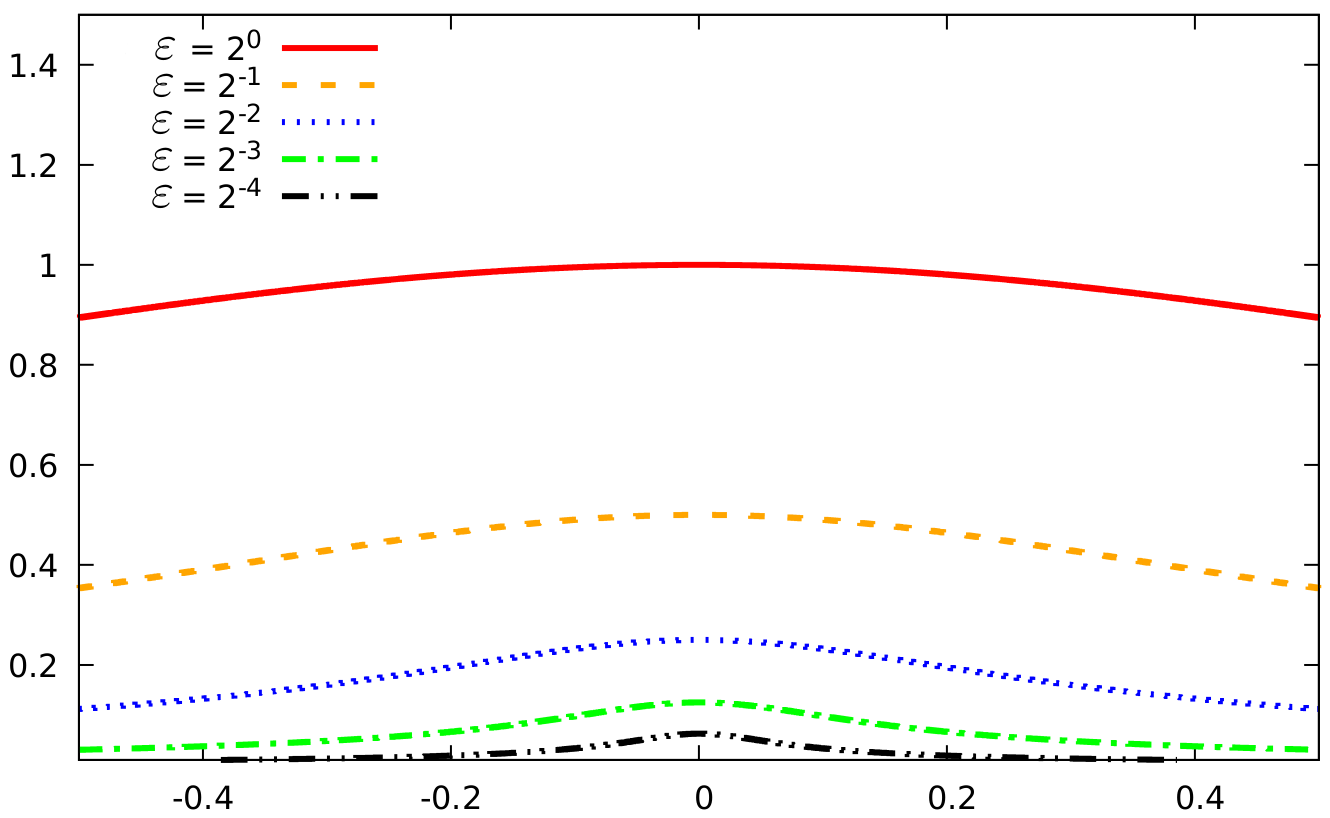}
        \caption{$g_1^\eps = g_1^\eps(\partial_x \phi)$} \label{fig:reg advection}
    \end{subfigure}
    \begin{subfigure}[b]{0.49\textwidth}
     \includegraphics[scale=0.15]{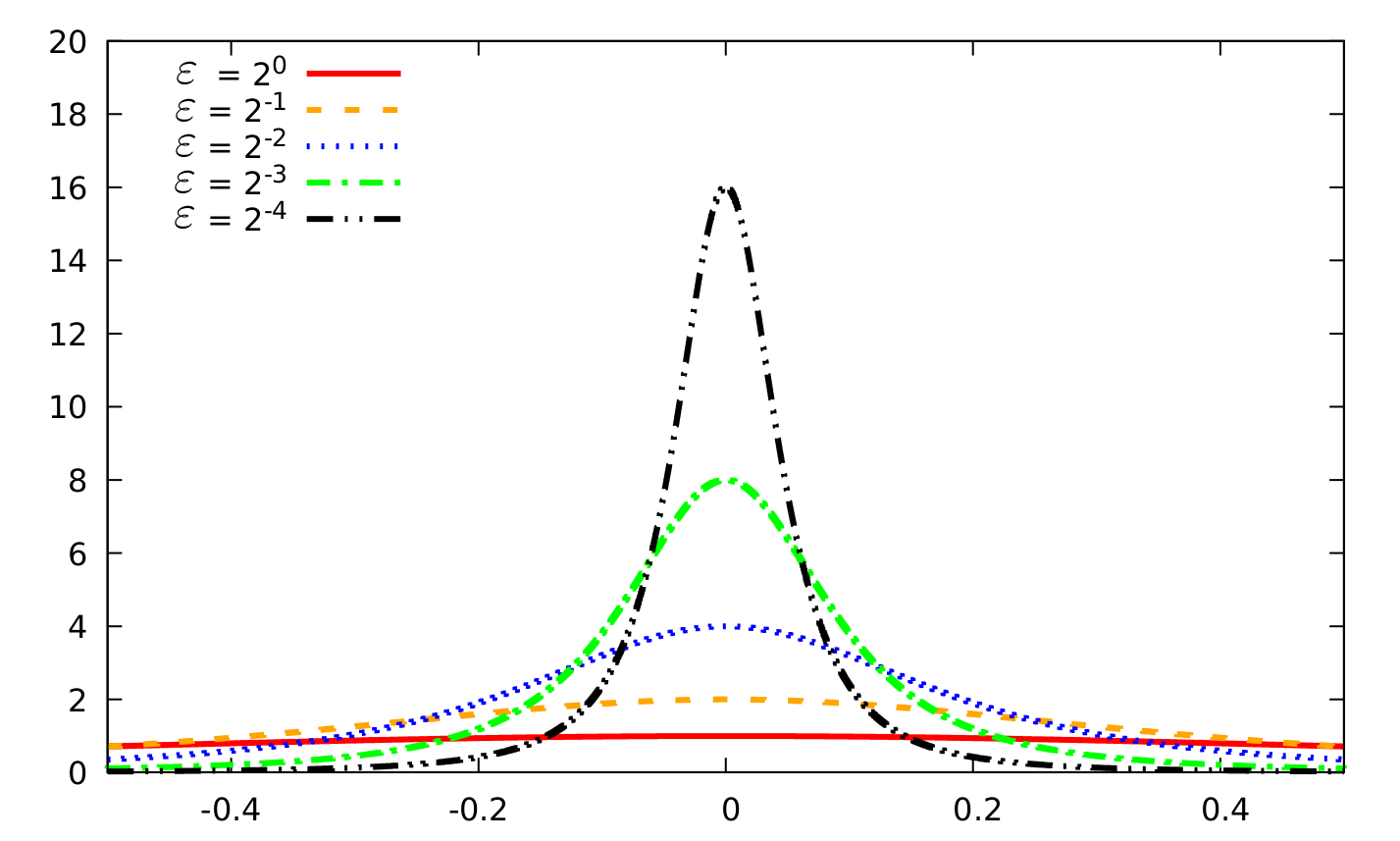}
        \caption{$g_2^\eps = g_2^\eps(\partial_x \phi)$} \label{fig:reg diffusion}
    \end{subfigure}
    \caption{Plot of the functions $g_1^\eps$ (a) and $g_2^\eps$ (b). On the horizontal axis $\partial_x \phi$ varies.}
    \label{fig: regularization}
    \end{center}
\end{figure}\\
The function $g_1^\eps$ vanishes in the limit $\eps \downarrow 0$ and its value at the origin is $g_1^\eps(0)=\eps$.
On the other hand, $g_2^\eps$ behaves as a (scaled) delta distribution centered at the origin.
The value at the origin is $g_2^\eps(0)=\eps^{-1}$ and the area under the profile $g_2^\eps$ is $2$ (which is independent of $\eps$).
Thus the regularization focuses the diffusion contribution at points where $\partial_x \phi$ approaches zero, i.e. the extrema of $\phi$.
For $\eps \downarrow 0$ we conclude that in one dimension variation entropy can either be produced by diffusion at local extrema or by the source term.

In the multi-dimensional case ($d>1$) the variation entropy diffusion $\mathscr{D}$ does not vanish. 
This is a clear separation of the $1$-dimensional case and the multi-dimensional case.
We explicitly state the temporal evolution of the non-regularized variation. 
The limit of $\eps \downarrow 0$ in (\ref{variation entropy evolution444})-(\ref{variation entropy evolution444 terms}) yields:
\begin{align}\label{variation entropy evolution4445}
\partial_t \|\nabla \phi\|_2 +\nabla \cdot  \mathbf{q} =& \mathscr{D}+\mathscr{S} ,
\end{align}
where the flux $\mathbf{q}$, the non-conservative terms $\mathscr{A}$ and $\mathscr{D}$ and the source term $\mathscr{S}$ are respectively defined as:
\begin{subequations}\label{variation entropy evolution4445 terms}
  \begin{alignat}{2}
   \mathbf{q} =&  \frac{\partial \mathbf{f}}{\partial \phi}  \|\nabla \phi\|_2 
   +  \frac{\partial \mathbf{f}}{\partial \nabla \phi}  \nabla \|\nabla \phi\|_2, \\
 \mathscr{D}  =& \frac{1}{\|\nabla \phi\|_2}\left(\left(\mathbf{I}-\frac{\nabla \phi \nabla \phi^T}{\|\nabla \phi\|_2^2}\right) \mathbf{H}_{\mathbf{x}} \phi \right): 
 \left(\frac{\partial \mathbf{f}}{\partial \nabla \phi} \mathbf{H}_{\mathbf{x}} \phi  \right),\\
 \mathscr{S}  =& \frac{\partial s}{\partial \phi} \|\nabla \phi\|_2,
  \end{alignat}
\end{subequations}
which is not defined for $\nabla \phi =\mathbf{0}$.

\begin{rmk}
 We emphasize that the well-known \textit{total variation diminishing (TVD)} constraint:
\begin{align}\label{eq: TVD constraint}
 \dfrac{{\rm d}}{{\rm d}t}\int_\Omega \| \nabla \phi(\mathbf{x},t)\|_2 ~{\rm d}\Omega \leq 0 \Rightarrow \int_\Omega \| \nabla \phi(\mathbf{x},t)\|_2 ~{\rm d}\Omega \leq \int_\Omega \|\nabla \phi_0(\mathbf{x})\|_2 ~{\rm d}\Omega,\quad \text{for}\quad t>0
\end{align}
is special case of decay of variation entropy (substitute $\eta=\|\nabla \phi\|_2$ into Corollary \ref{cor: total}).
\end{rmk}

We wish to indicate the effect of regularization on the total variation. 
To that purpose we compute the total variation and its regularized counterpart for two functions: (i) a linear approximation of the Heaviside function
\begin{align}\label{eq: lin}
  \phi_{L,\mathcal{E}}(x) = \left\{ \begin{matrix}
                                   0                    & \text{if} \quad x \leq - \mathcal{E}\\
                                   \frac{1}{2}\left(1+\frac{x}{\mathcal{E}}\right) & \text{if}  \quad x < |\mathcal{E}|\\
                                   1                    & \text{if}  \quad x \geq  \mathcal{E}
                                  \end{matrix}
  \right.,
\end{align}
and (ii) a smoothed Heaviside function:
\begin{align}\label{eq: sin}
  \phi_{S,\mathcal{E}}(x) = \left\{ \begin{matrix}
                                   0                    & \text{if} \quad x \leq - \mathcal{E}\\
                                   \frac{1}{2}\left(1+\frac{x}{\mathcal{E}} + \frac{1}{\pi}\sin\left(\frac{x \pi}{\mathcal{E}}\right)\right) & \text{if}  \quad x < |\mathcal{E}|\\
                                   1                    & \text{if}  \quad x \geq  \mathcal{E}
                                  \end{matrix}
  \right..
\end{align}
This smoothed Heaviside is often used for levelset computations, see e.g. \cite{ABKF11,AkBaBeFaKe12}.
We plot the approximated Heaviside functions and the (regularized) $2$-norms of their derivatives in Figure \ref{fig: regularization norm Heaviside}.
\begin{figure}[h!]
    \begin{center}
    \begin{subfigure}[b]{0.49\textwidth}
  \includegraphics[scale=0.5]{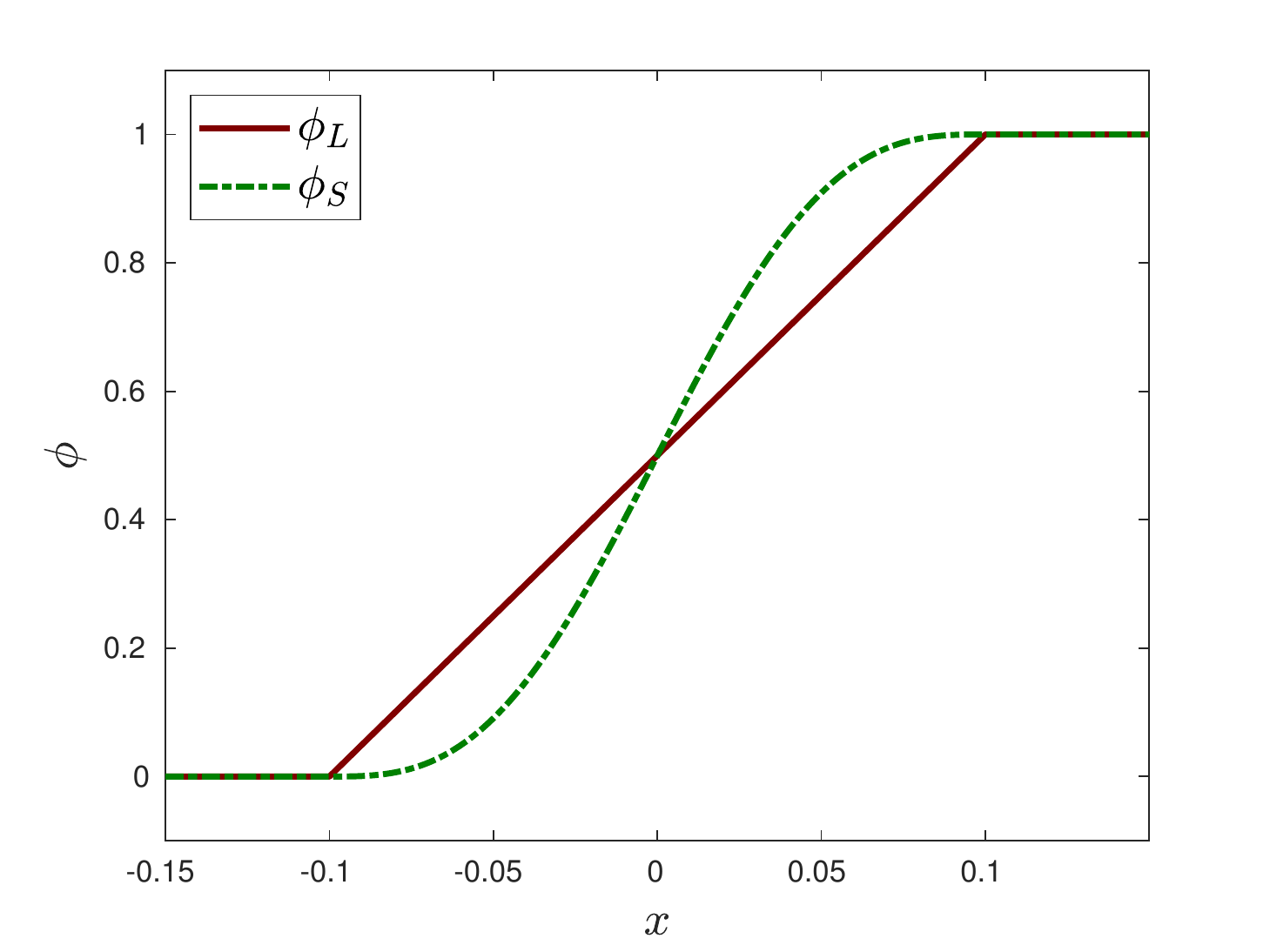}
        \caption{$\phi_{\mathcal{E}}=\phi_{\mathcal{E}}(x)$} 
    \end{subfigure}
    \begin{subfigure}[b]{0.49\textwidth}
     \includegraphics[scale=0.5]{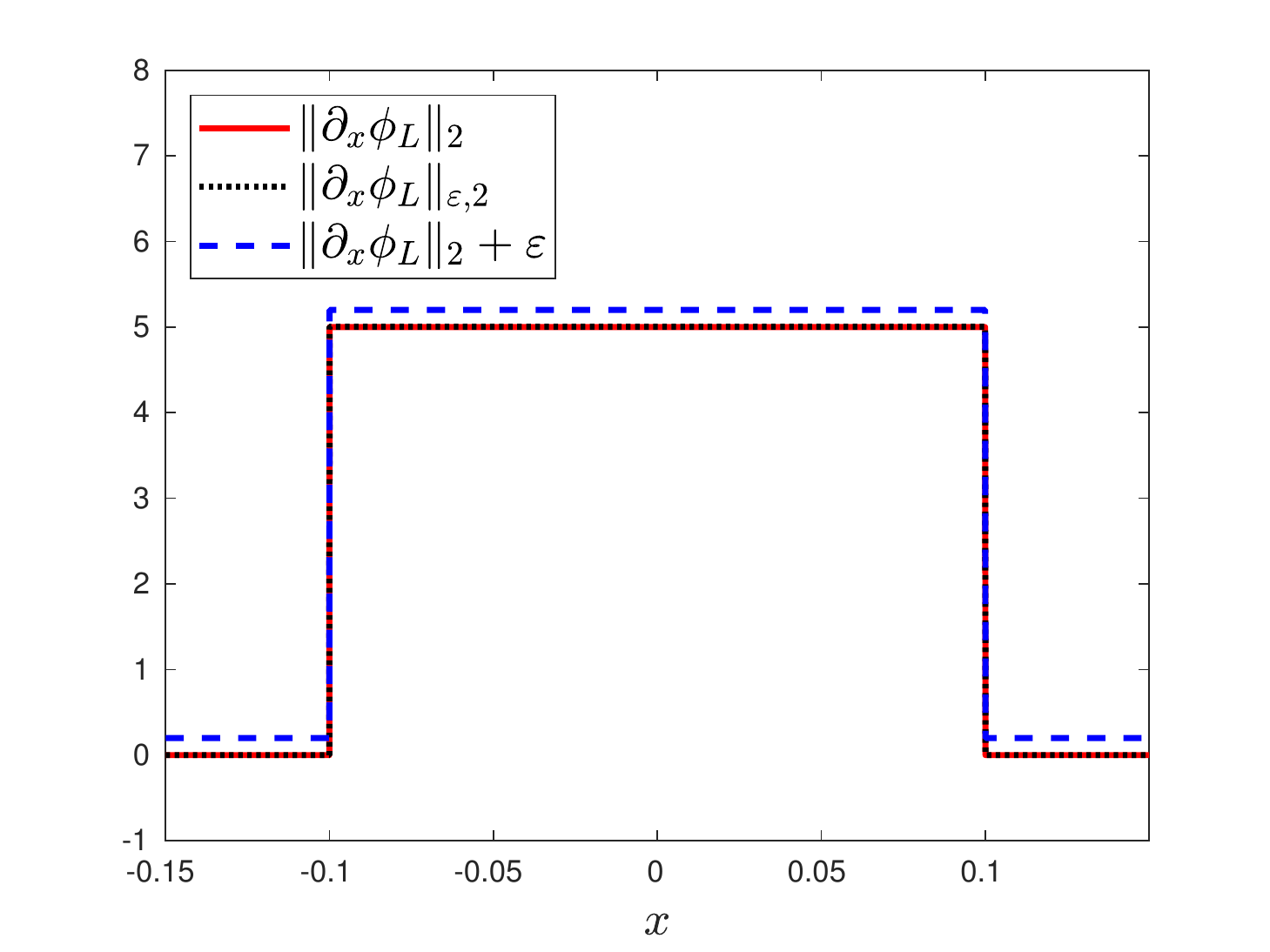}
        \caption{$\partial_x \phi_{L,\mathcal{E}}=\partial_x \phi_{L,\mathcal{E}}(x)$}
    \end{subfigure}  
    \begin{subfigure}[b]{0.49\textwidth}
     \includegraphics[scale=0.5]{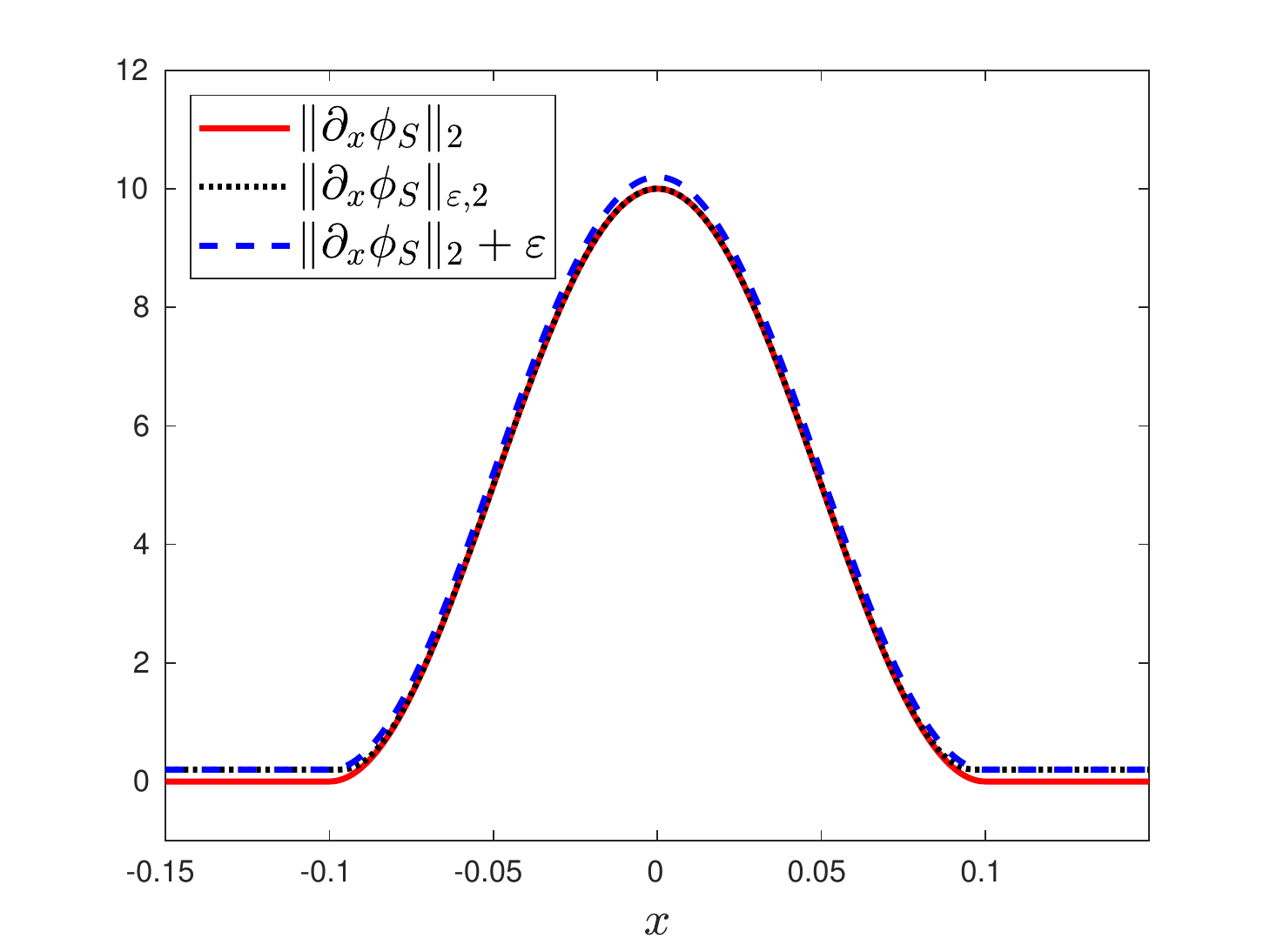}
        \caption{$\partial_x \phi_{S,\mathcal{E}}=\partial_x \phi_{S,\mathcal{E}}(x)$}
    \end{subfigure}    
    \caption{Plot of (a) the smooth Heaviside and (b) the (regularized) $2$-norms of its derivative. Here $\varepsilon = 0.2$ and $\mathcal{E}=0.1$.}
    \label{fig: regularization norm Heaviside}
    \end{center}
\end{figure}\\
The total variation and its regularizations are defined as:
\begin{subequations}\label{eq: TV Heaviside}
  \begin{alignat}{2}
TV(\phi_{\mathcal{E}}) =& \int_{-\mathcal{E}}^{\mathcal{E}} \| \partial_x \phi_{\mathcal{E}}(x)\|_2 ~{\rm d}x, \label{eq: TV normal}\\ 
TV_\varepsilon(\phi_{\mathcal{E}}) =& \int_{-\mathcal{E}}^{\mathcal{E}} \| \partial_x \phi_{\mathcal{E}}(x)\|_{\varepsilon,2} ~{\rm d}x, \label{eq: TV reg1} \\
\overline{TV}_\varepsilon(\phi_{\mathcal{E}}) =& \int_{-\mathcal{E}}^{\mathcal{E}} \| \partial_x \phi_{\mathcal{E}}(x)\|_{2} +\varepsilon ~{\rm d}x.
  \end{alignat}
\end{subequations}
The total variation (\ref{eq: TV normal}) is independent of the regularization parameter $\mathcal{E}$ since the functions (\ref{eq: lin})-(\ref{eq: sin}) monotonically increase from $0$ to $1$ and thus we have:
\begin{align}\label{eq: TV Heaviside}
 TV(\phi_{L,\mathcal{E}}) = TV(\phi_{S,\mathcal{E}})   = 1.
\end{align}
The regularized total variation (\ref{eq: TV reg1}) for the linear approximation (\ref{eq: lin}) equals:
\begin{align}\label{eq: TV Heavisidereg}
 TV_\varepsilon(\phi_{L,\mathcal{E}}) = \sqrt{1+4 \mathcal{E}^2 \varepsilon^2}.
\end{align}
Generally a small value is chosen for the parameters $\mathcal{E}$ and $\varepsilon$ which indicates that for the linear approximation the regularized total variation is nearly indistinguishable from its exact counterpart.
In Figure \ref{fig: TV regularization norm Heaviside} we show the (regularized) total variation for the both approximations.
\begin{figure}[h!]
    \begin{center}
    \begin{subfigure}[b]{0.49\textwidth}
    \includegraphics[scale=0.5]{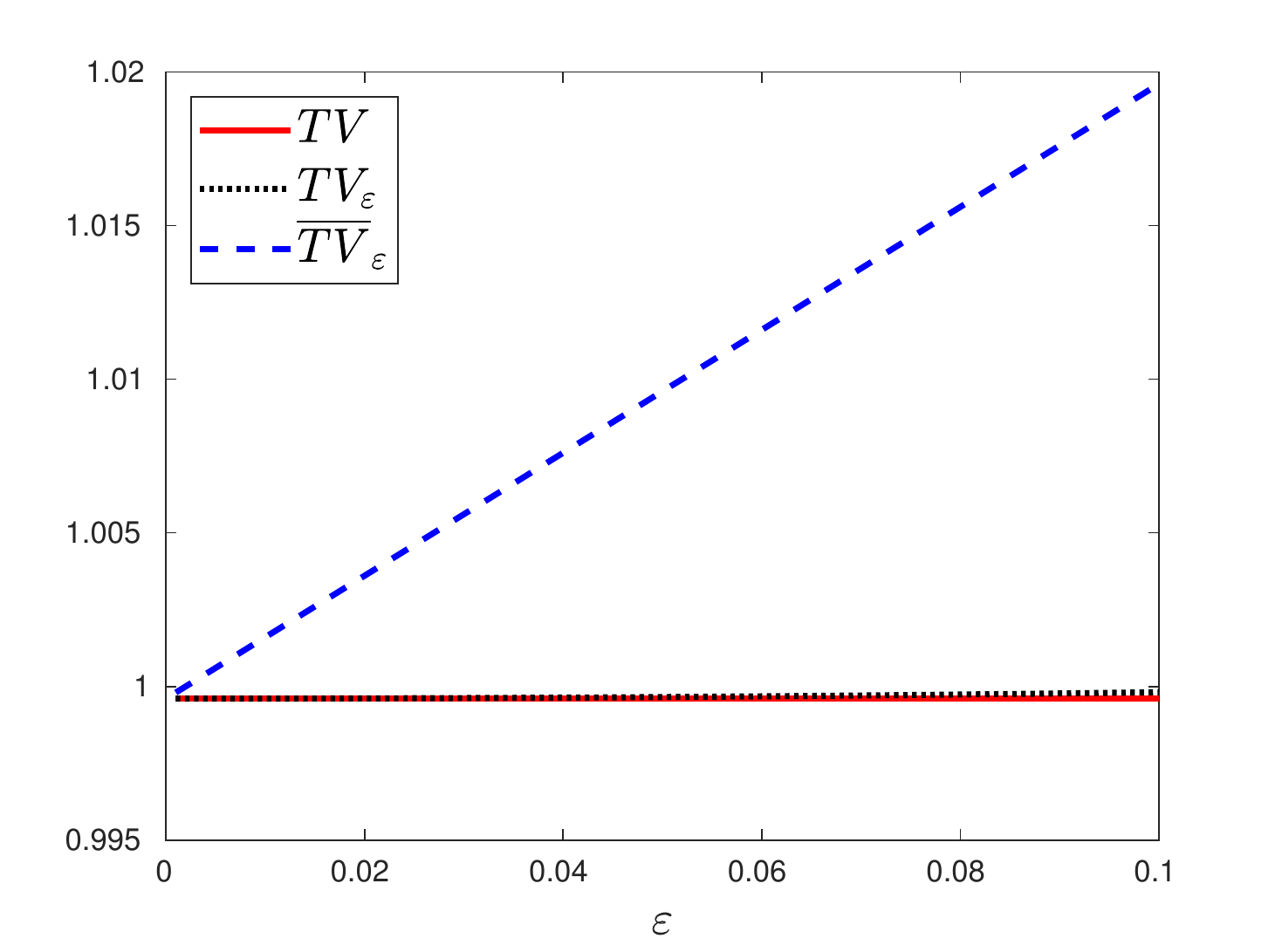}
    \caption{Total variation for the smooth Heaviside. }
    \end{subfigure}
    \begin{subfigure}[b]{0.49\textwidth}
    \includegraphics[scale=0.5]{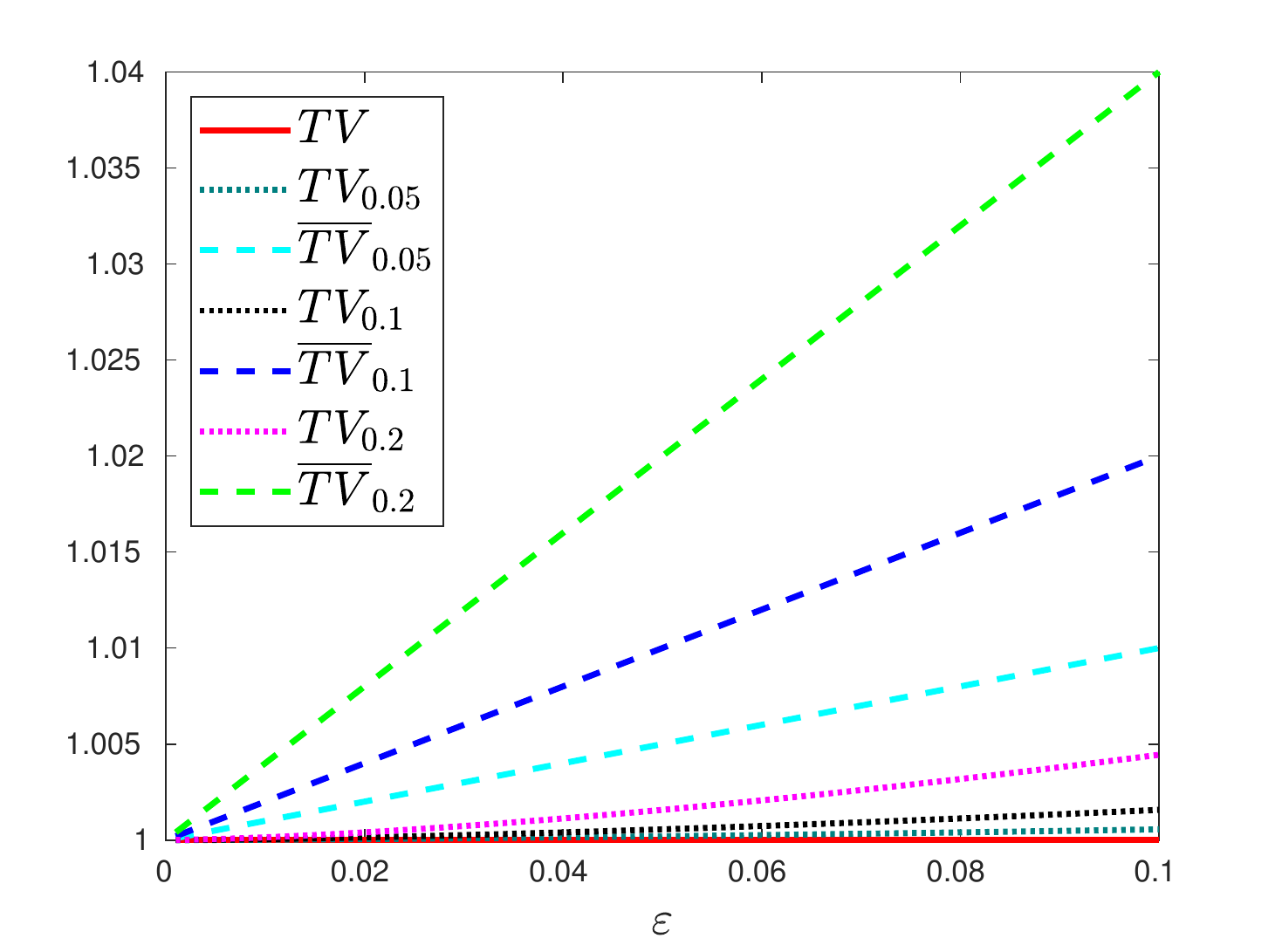}
    \caption{Total variation for the linearized Heaviside.}
    \end{subfigure}    
    \caption{Plot of total variation and its regularization for (a) the linearized Heaviside ($\mathcal{E}=0.1$) and (b) the smooth Heaviside for $\mathcal{E}=0.05, 0.1, 0.02$.}
    \label{fig: TV regularization norm Heaviside}
    \end{center}
\end{figure}\\
We see that also for the smooth Heaviside small regularization parameters $\mathcal{E}$ and $\varepsilon$ indicate a close resemblance with the exact value.
Note that the estimate
\begin{align}\label{eq: regularized TV}
 TV(\phi) \leq TV_{\varepsilon}(\phi) \leq \overline{TV}_\varepsilon(\phi),
\end{align}
which is a direct consequence of (\ref{eq: regularized 2 norm 4}), is confirmed by Figure \ref{fig: TV regularization norm Heaviside}.

\section{Conclusion and discussion}\label{sec:Conclusions}
The purpose of this paper is to answer the two questions:
\begin{itemize}
 \item \textit{How can we construct a local continuous generalization of the TVD stability condition?}
 \item \textit{Is there a connection between entropy solutions and the TVD property?}
\end{itemize}
To accomplish this we have developed the new stability concept \textit{variation entropy} for nonlinear conservation laws.
The core idea is to develop an entropy concept based on the gradient of the solution of a conservation law instead of on the solution itself.
This may be a more natural and suitable approach when dealing with shock waves, which are characterized by their large gradients.

Variation entropy solutions are formulated in the continuous and as such employing this concept eliminates the restrictions of the TVD property.
Variation entropies are homogeneous convex functions.
As a result, all semi-norms are suitable variation entropies.
A particular choice is the standard $2$-norm variation entropy which can be viewed as local continuous evolution equation of the TVD property.
This sheds light on the TVD property from the perspective of entropy solutions.
In other words, \textit{variation entropy solutions are the link between classical entropy solutions and the TVD property}.
 
This paper opens several doors for future research.
The class of variation entropy solutions can be used to design new numerical methods.
Numerical solutions in this class cannot create variation entropy and thus satisfy a local TVD stability property.
This precludes spurious oscillations which is crucial near shock waves.

A particular open problem is the construction of discontinuities capturing mechanisms in finite element methods.
A discontinuity capturing operator could be directly based on the variation entropy condition.
It could add diffusion where the variation entropy condition is harmed and render inoperative elsewhere.
We believe that a natural way to arrive at a discontinuity capturing method is via the variational multiscale (VMS) method.
In \cite{EiBaAk18TVDii} we explore this approach and use the multiscale split projector to demand the variation entropy condition.
This naturally results in a consistent VMS method equipped with a penalty term that adds diffusion when the variation entropy condition is violated.

\appendix

\section{The $3$-dimensional version of the convexity condition in spherical coordinates}\label{appendix: 3-dimensional case}
\begin{lem}\label{main thm 3d}(Convexity condition in spherical coordinates)
Let the dimension $d=3$.
The function $\hat{\eta}(r, \varphi) = F(\theta, \varphi)r$, with radius $r$ and angles $\theta, \varphi$ is convex if and only if 
the scalar-valued function $F=F(\varphi,\theta)$ satisfies
\begin{subequations}
  \begin{alignat}{2}
 A \geq B \geq 0, \label{eq: res1}
  \end{alignat}
with 
  \begin{alignat}{2}
 &A = 4\left(2 F +  \pvarphiF \cot \varphi +  \pthetathetaF \csc^2{\varphi}\right),  \\
 &B = \sqrt{2} \csc^2
     \varphi \left(\left(\pvarphiF\right)^2(1- \cos 4 \varphi)  + 32\left(\pthetaF\right)^2 + 8 \left(\pthetathetaF\right)^2 
          + 32 \left(\left(\pvarphithetaF\right)^2 - \left(\pthetaF\right)^2\right)  \sin^2 \varphi \right.  \nn\\
        &  \left. \quad\quad \quad \quad \quad    + 
         8 \left( \pvarphiF \pthetathetaF-4 \pvarphithetaF \pthetaF\right) \sin(2 \varphi)\right)^{1/2}. 
  \end{alignat}
\end{subequations}
\end{lem}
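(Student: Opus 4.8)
The plan is to run the two-dimensional argument of the proof of Lemma~\ref{thm alterative form VE} one dimension higher, replacing the single angular derivative by the full geometry of the sphere $S^2$. Convexity of $\eta=\eta(\nabla\phi)$ is equivalent to positive semi-definiteness of the Hessian $\mathbf{H}_{\nabla\phi}\eta$ at every point $\nabla\phi\neq\mathbf{0}$, so the task is to control its spectrum. The decisive structural input, already isolated inside the proof of Lemma~\ref{thm advection}, is that the homogeneity property (\ref{eq: homogeneity property}) forces
\begin{align}
 \mathbf{H}_{\nabla\phi}\eta\,\nabla\phi=\mathbf{0},
\end{align}
i.e. the radial direction $\nabla\phi$ is always an eigenvector with eigenvalue $0$. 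Consequently $\mathbf{H}_{\nabla\phi}\eta$ carries a guaranteed zero eigenvalue, and its positive semi-definiteness is equivalent to the two remaining eigenvalues $\lambda_+,\lambda_-$ --- those acting in the plane tangent to the sphere at $\nabla\phi/\lvert\nabla\phi\rvert$ --- being non-negative. In the case $d=2$ the tangent plane is one-dimensional and only the single eigenvalue $(F+F'')/r$ survived; for $d=3$ there are two.

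The second step is to express $\lambda_\pm$ through $F$ and its angular derivatives. Writing $\hat\eta=rF(\theta,\varphi)$, I would compute $\mathbf{H}_{\nabla\phi}\eta$ in the Cartesian frame and pass to spherical coordinates exactly as the explicit $2\times2$ matrix was produced in the planar proof. The convenient bookkeeping identity, valid for any positively $1$-homogeneous $\eta$, is that on the tangent space to the sphere
\begin{align}
 \left.\mathbf{H}_{\nabla\phi}\eta\right|_{TS^2}=\tfrac{1}{r}\left(\mathrm{Hess}_{S^2}F+F\,\mathbf{g}\right),
\end{align}
with $\mathbf{g}$ the round metric and $\mathrm{Hess}_{S^2}$ the covariant Hessian. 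In the chart $(\varphi,\theta)$ the Christoffel symbols contribute the corrections $-\cot\varphi\,F_\theta$ to the mixed entry and $\sin\varphi\cos\varphi\,F_\varphi$ to the $\theta\theta$ entry; raising one index with $\mathbf{g}^{-1}=\mathrm{diag}(1,\csc^2\varphi)$ converts the problem into the spectrum of an explicit symmetric $2\times2$ operator $\mathbf{N}$ whose trace is the natural three-dimensional replacement of $F+F''$, namely $2F+\Delta_{S^2}F$.

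The third step is the elementary two-by-two spectral criterion. A symmetric operator $\mathbf{N}$ has both eigenvalues non-negative precisely when $\operatorname{tr}\mathbf{N}\geq0$ and $\det\mathbf{N}\geq0$; equivalently, writing $\lambda_\pm=\tfrac12\big(\operatorname{tr}\mathbf{N}\pm\sqrt{(\operatorname{tr}\mathbf{N})^2-4\det\mathbf{N}}\big)$, the binding requirement is that the smaller root be non-negative,
\begin{align}
 \operatorname{tr}\mathbf{N}\ \geq\ \sqrt{(\operatorname{tr}\mathbf{N})^2-4\det\mathbf{N}}\ \geq\ 0 .
\end{align}
I expect $A$ to emerge as the scaled trace and $B$ as the same scaling of the discriminant root, so that this chain is exactly the claimed $A\geq B\geq0$; the half-angle identities $1-\cos4\varphi=8\sin^2\varphi\cos^2\varphi$ and $\sin2\varphi=2\sin\varphi\cos\varphi$ are precisely what turn the raw discriminant into the compact closed form displayed for $B$.

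The main obstacle is purely computational. Assembling $\det\mathbf{N}$ and the discriminant $(\operatorname{tr}\mathbf{N})^2-4\det\mathbf{N}$ and coaxing them into the stated form for $B$ is where the work lies: the determinant mixes $F_\varphi F_{\theta\theta}$ against $(F_{\varphi\theta})^2$ together with the Christoffel cross terms, so the combination $F_\varphi F_{\theta\theta}-4F_{\varphi\theta}F_\theta$ and the various $\csc^2\varphi$ weights must be tracked without a sign or index slip. Once the discriminant has been reduced to closed form the equivalence with $A\geq B\geq0$ is immediate, and the conceptual content --- a guaranteed zero eigenvalue together with the non-negativity of a two-by-two spectrum --- is already settled in the first two steps.
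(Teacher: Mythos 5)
Your route is genuinely different from the paper's and, at the structural level, cleaner. The paper computes all six independent Cartesian entries of $\mathbf{H}_{\nabla\phi}\eta$ by transforming $\partial/\partial v_i$ to spherical coordinates and then simply states the resulting eigenvalues; you instead use the homogeneity identity $\mathbf{H}_{\nabla\phi}\eta\,\nabla\phi=\mathbf{0}$ to dispose of the radial direction a priori and reduce to the intrinsic $2\times 2$ operator $g^{-1}\left(\mathrm{Hess}_{S^2}F+F\,g\right)$, finishing with the trace/discriminant criterion. Both proofs end at the same place (one guaranteed zero eigenvalue plus non-negativity of the two roots of a quadratic), but your reduction explains where the zero eigenvalue comes from and replaces a page of second-derivative bookkeeping with two Christoffel corrections. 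The support-function identity you invoke is correct, and the Christoffel terms you list are the right ones.

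However, the last step of your plan --- ``I expect $A$ to emerge as the scaled trace'' --- is asserted rather than checked, and as stated it fails. The trace of your operator is $2F+\Delta_{S^2}F = 2F + \pvarphivarphiF + \pvarphiF\cot\varphi + \pthetathetaF\csc^2\varphi$, which contains $\pvarphivarphiF$; the lemma's $A/4$ does not. This is not a defect of your method: the trace of the ambient Hessian must equal $\Delta(rF)=r^{-1}\left(2F+\Delta_{S^2}F\right)$, and since one eigenvalue vanishes the two tangential eigenvalues must sum to exactly that quantity; one can also confirm that the $\pvarphivarphiF$-coefficients in the paper's own explicit Hessian entries sum to $1/r$. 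A concrete check: for the linear variation entropy $\eta=v_3$, i.e. $F=\cos\varphi$, the Hessian vanishes identically, yet the displayed expressions give $A=4\cos\varphi$ and $B=4|\cos\varphi|$, hence a strictly positive eigenvalue $(A+B)/(8r)$. So carrying your computation to completion will not reproduce the stated $A$ and $B$; it will produce $A=4\left(2F+\pvarphivarphiF+\pvarphiF\cot\varphi+\pthetathetaF\csc^2\varphi\right)$ and a correspondingly modified discriminant. As a proof of the lemma as printed your proposal therefore does not close; you need either to finish the algebra and reconcile (in fact, correct) the target expressions, or to restate the convexity condition in the trace/determinant form $\operatorname{tr}\mathbf{N}\geq 0$, $\det\mathbf{N}\geq 0$ that your argument actually delivers.
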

\begin{proof}
  We follow the same procedure as in the $2$-dimensional case and thus we show that the eigenvalues of the Hessian are non-negative. 
  Therefore we employ spherical coordinates:
\begin{subequations}
  \begin{alignat}{2}
 v_1 = &~r \cos \theta \sin \varphi, \\
 v_2 = &~r \sin \theta \sin \varphi, \\
 v_3 = &~r \cos \varphi. 
  \end{alignat}
\end{subequations}
The first derivatives can be written in spherical coordinates as:
\begin{subequations}
  \begin{alignat}{2}
 \dfrac{\partial}{\partial v_1} &= \cos \theta \sin \varphi \dfrac{\partial}{\partial r} - \dfrac{\sin \theta}{r \sin \varphi}\dfrac{\partial}{\partial \theta}+\dfrac{\cos \theta \cos \varphi}{r} \dfrac{\partial}{\partial \varphi},\\
 \dfrac{\partial}{\partial v_2} &= \sin \theta \sin \varphi \dfrac{\partial}{\partial r} + \dfrac{\cos \theta}{r \sin \varphi}\dfrac{\partial}{\partial \theta}+\dfrac{\sin \theta \cos \varphi}{r} \dfrac{\partial}{\partial \varphi},\\
 \dfrac{\partial}{\partial v_3} &= \cos \varphi \dfrac{\partial}{\partial r} - \dfrac{\sin \varphi}{r} \dfrac{\partial}{\partial \varphi}.
  \end{alignat}
\end{subequations}
The computation of the second derivatives is straightforward but at the same time quite involved. 
Here we only provide the resulting components of the Hessian, which are
\begin{subequations}
  \begin{alignat}{2}
 \dfrac{\partial^2}{\partial v_1^2}\eta =&  \dfrac{\sin^2 \theta}{r}F - \dfrac{\sin \theta \cos \theta}{r}\pthetaF+ \dfrac{\sin \theta \cos \theta}{r \sin^2 \varphi}\pthetaF + \dfrac{\sin^2 \theta}{r \sin^2 \varphi}\pthetathetaF+ \dfrac{\sin^2 \theta \cos \varphi}{r \sin \varphi}\pvarphiF {\color{white}affffffffggggg}\nn\\
 &- \dfrac{\sin \theta \cos \theta \cos \varphi}{r \sin \varphi}\pvarphithetaF+ \dfrac{\cos^2 \theta \cos^2 \varphi}{r}F + \dfrac{\cos\theta \cos^2 \varphi \sin\theta}{r \sin^2\varphi}\pthetaF \nn \\
 &- \dfrac{\cos\theta\sin\theta\cos\varphi}{r \sin\varphi}\pvarphithetaF + \dfrac{\cos^2\theta\cos^2\varphi}{r}\pvarphivarphiF,\\
 \dfrac{\partial^2}{\partial v_2^2}\eta =&  \dfrac{\cos^2 \theta}{r}F + \dfrac{\sin \theta \cos \theta}{r}\pthetaF - \dfrac{\sin \theta \cos \theta}{r \sin^2 \varphi}\pthetaF + \dfrac{\cos^2 \theta}{r \sin^2 \varphi}\pthetathetaF+ \dfrac{\cos^2 \theta \cos \varphi}{r \sin \varphi}\pvarphiF {\color{white}affffffffggggg}\nn \\
 &+ \dfrac{\sin \theta \cos \theta \cos \varphi}{r \sin \varphi}\pvarphithetaF+ \dfrac{\sin^2 \theta \cos^2 \varphi}{r}F - \dfrac{\cos\theta \cos^2 \varphi \sin\theta}{r \sin^2\varphi}\pthetaF \nn \\
 &+ \dfrac{\cos\theta\sin\theta\cos\varphi}{r \sin\varphi}\pvarphithetaF + \dfrac{\sin^2\theta\cos^2\varphi}{r}\pvarphivarphiF,\\
 \dfrac{\partial^2}{\partial v_3^2}\eta =& \left(F+ \pvarphivarphiF\right)\dfrac{\sin^2 \varphi}{r}, {\color{white}afefffffffffffffffffffff66666666fffffffffffggggg}\\
 \dfrac{\partial^2\eta}{\partial v_1\partial v_2} =\dfrac{\partial^2\eta}{\partial v_2\partial v_1}
					  =& + \cos^2\varphi\dfrac{\sin^2\theta-\cos^2\theta}{r \sin^2\varphi}\pthetaF -\dfrac{\sin \theta\cos\theta}{r\sin^2\varphi} \pthetathetaF - \dfrac{\sin \theta\cos\theta \cos \varphi}{r \sin \varphi}\pvarphiF  \nn\\
					   &  - \dfrac{\cos\theta \sin^2\varphi\sin\theta}{r}F  + \dfrac{(\cos^2 \theta -\sin^2\theta)\cos\varphi }{r\sin \varphi}\pvarphithetaF  + \dfrac{\sin \theta \cos \theta \cos^2\varphi}{r}\pvarphivarphiF,\\					   
  \dfrac{\partial^2}{\partial v_1\partial v_3}\eta =\dfrac{\partial^2}{\partial v_3\partial v_1}\eta=&-\dfrac{\sin \varphi \cos \theta \cos \varphi}{r}\left(F+\pvarphivarphiF\right) 
  - \dfrac{\sin \theta \cos \varphi}{r \sin \varphi}\pthetaF + \dfrac{\sin \theta}{r}\pvarphithetaF,{\color{white}afefggggg}\\
  \dfrac{\partial^2}{\partial v_2\partial v_3}\eta =  \dfrac{\partial^2}{\partial v_3\partial v_2}\eta=&-\dfrac{\sin \varphi \sin \theta \cos \varphi}{r}\left(F+\pvarphivarphiF\right) + \dfrac{\cos \theta \cos \varphi}{r \sin \varphi}\pthetaF - \dfrac{\cos \theta}{r}\pvarphithetaF.{\color{white}afefggggg}  
  \end{alignat}
\end{subequations}
The eigenvalues of the Hessian can be computed to be:
\begin{subequations}
  \begin{alignat}{2}
 \lambda_1=&0,\\
 \lambda_2=&\dfrac{1}{8 r} \left[8 F + 4 \pvarphiF \cot \varphi + 4 \pthetathetaF \csc^2{\varphi} \right. \nn \\
 & \quad \quad \left. + 
   \sqrt{2} \csc^2
     \varphi \left(\pvarphiF^2(1- \cos 4 \varphi)  + 32\pthetaF^2 + 8 \pthetathetaF^2 
          + 32 \left(\left(\pvarphithetaF\right)^2 - \left(\pthetaF\right)^2\right)  \sin^2 \varphi \right. \right. \nn\\
        & \left. \left. \quad \quad\quad \quad \quad \quad \quad    + 
         8 \left( \pvarphiF \pthetathetaF-4 \pvarphithetaF \pthetaF\right) \sin(2 \varphi)\right)^{1/2}\right], \\
 \lambda_3=&\dfrac{1}{8 r} \left[8 F + 4 \pvarphiF \cot \varphi + 4 \pthetathetaF \csc^2{\varphi} \right. \nn \\
 & \quad \quad \left. - 
   \sqrt{2} \csc^2
     \varphi \left(\pvarphiF^2(1- \cos 4 \varphi)  + 32\pthetaF^2 + 8 \pthetathetaF^2 
          + 32 \left(\left(\pvarphithetaF\right)^2 - \left(\pthetaF\right)^2\right)  \sin^2 \varphi \right. \right. \nn\\
        & \left. \left. \quad \quad\quad \quad \quad \quad \quad    + 
         8 \left( \pvarphiF \pthetathetaF-4 \pvarphithetaF \pthetaF\right) \sin(2 \varphi)\right)^{1/2}\right].       
  \end{alignat}
\end{subequations}
Positivity of the eigenvalues leads to the restrictions on $F$.
\end{proof}

\section*{Acknowledgement}\label{sec:ack}

The authors are grateful to Delft University of Technology for its support.

\bibliographystyle{unsrt}
\bibliography{refs}
\end{document}